\numberwithin{equation}{section}
\newtheorem{theorem}{Theorem}[section]
\newtheorem{corollary}[theorem]{Corollary}
\newtheorem{lemma}[theorem]{Lemma}
\newtheorem{prop}[theorem]{Proposition}
\theoremstyle{definitio`n}
\newtheorem{remark}[theorem]{Remark}
\theoremstyle{definition}
\newtheorem{definition}[theorem]{Definition}
\theoremstyle{definition}
\newtheorem{assumption}[theorem]{Assumption}
\theoremstyle{definition}
\def\dashint{\operatorname%
{\,\,\text{\bf-}\kern-.98em\DOTSI\intop\ilimits@\!\!}}
\def\\det{\text{det}}
\def\.5{\frac{1}{2}}
\def\cD{\mathcal{D}}
\def\cQ{\mathcal{Q}}
\newcommand{\RN}[1]{%
  \textup{\uppercase\expandafter{\romannumeral#1}}%
}
\newcommand{\Div}{\operatorname{div}}
\newcommand{\dist}{\text{dist}}
\newcounter{marnote}
\begin{document}


\title[Gradient estimates for divergence form parabolic systems]{Gradient estimates for divergence form parabolic systems}

\author[H. Dong]{Hongjie Dong}
\address[H. Dong]{Division of Applied Mathematics, Brown University, 182 George Street, Providence, RI 02912, USA}
\email{Hongjie\_Dong@brown.edu }
\thanks{H. Dong was partially supported by the NSF under agreement DMS-1600593.}

\author[L. Xu]{Longjuan Xu}
\address[L. Xu]{Department of Mathematics, Yonsei University, 50 Yonsei-Ro, Seodaemun-gu, Seoul 03722, Republic of Korea.}
\email{longjuanxu@yonsei.ac.kr}

\begin{abstract}
We consider divergence form, second-order strongly parabolic systems in a cylindrical domain with a finite number of subdomains under the assumption that the interfacial boundaries are  $C^{1,\text{Dini}}$ and $C^{\gamma_{0}}$ in the spatial variables and the time variable, respectively. Gradient estimates and piecewise $C^{1/2,1}$-regularity are established when the leading coefficients and data are assumed to be of piecewise Dini mean oscillation or piecewise H\"{o}lder continuous. Our results improve the previous results in \cite{ll,fknn} to a large extent. We also prove a global weak type-$(1,1)$ estimate with respect to $A_{1}$ Muckenhoupt weights for the parabolic systems with leading coefficients which satisfy a stronger assumption. As a byproduct, we give a proof of optimal regularity of weak solutions to parabolic transmission problems with $C^{1,\mu}$ or $C^{1,\text{Dini}}$ interfaces. This gives an extension of a recent result in \cite{css} to parabolic systems.
\end{abstract}

\maketitle

\section{Introduction}

We are concerned with second-order parabolic systems in divergence form arising from composite materials. We are interested in obtaining the gradient estimates for such systems when the domain can be decomposed into a finite number of time-dependent subdomains with coefficients and data which are of piecewise Dini mean oscillation. See the specific definitions given in the next section. These estimates will be shown to be independent of the distance between subdomains. Such a problem also appears in the study of the evolution of fronts in fluid dynamics, where the interfacial boundaries are typically time-dependent. See, for instance, \cite{GG18}.

The well-known theory of De Giorgi-Nash-Moser states that weak solutions for divergence form second-order elliptic and parabolic equations are H\"{o}lder continuous when the leading coefficients are bounded and measurable. On the other hand, we recall that the examples in \cite{meyer,ps} reveal that  solutions to second-order elliptic and parabolic equations with bounded and measurable coefficients are in general not Lipschitz continuous. A natural question is that what is the minimal regularity assumption of the coefficients for the $C^{1}$ or Lipschitz regularity of weak solutions. See \cite{a,b} for results in this direction. In \cite{l}, Li proved $C^{1}$-regularity of solutions to divergence form elliptic systems
$$
D_{\alpha}(A^{\alpha\beta}D_{\beta}u)=0,
$$
provided that the modulus of the continuity of coefficients in the $L^{\infty}$ sense satisfies the Dini condition. This result was extended in \cite{dk} to non-homogeneous equations
\begin{equation}\label{system laminae}
D_{\alpha}(A^{\alpha\beta}D_{\beta}u)=\Div g,
\end{equation}
where the coefficients and data are only assumed to be of Dini mean oscillation. See also \cite{dek} for the corresponding boundary estimates.
Recently, Dong, Escauriaza, and Kim \cite{dek2} considered parabolic equations in divergence form with zero Dirichlet boundary conditions and showed that weak solutions are continuously differentiable in the space variables and $C^{1/2}$ in the time variable up to the boundary when the leading coefficients have Dini mean oscillation with respect to the space variables and the lower-order coefficients satisfy certain conditions.

There are also many works in the literature concerning the case when the domain contains subdomains and the coefficients are piecewise regular. See, for instance, \cite{basl,bv,xb}.
Chipot, Kinderlehrer, and  Vergara-Caffarelli \cite{ckvc} showed that any weak solution $u$ of \eqref{system laminae} is locally Lipschitz if $A^{\alpha\beta}$ are piecewise constants and $g\in C^{k}$ for $k\geq[d/2]$, when the domain consists of a finite number of linearly elastic, homogeneous, and parallel laminae. Li and Vogelius \cite{lv} considered scalar elliptic equations
$$
D_{\alpha}(a^{\alpha\beta}D_{\beta}u)=\Div g+f,
$$
where the matrix $(a^{\alpha\beta})$ and data are assumed to be $C^{\delta}$ up to the boundary in each subdomain with $C^{1,\mu}, 0<\mu\leq1,$ boundary, but may have jump discontinuities across the boundaries of the subdomains. The authors derived global Lipschitz and piecewise $C^{1,\delta'}$ estimates of the solution $u$ for any $\delta'\in (1,\min\{\delta,\frac{\mu}{d(\mu+1)}\}]$, with the estimates independent of the distance between subdomains. Li and Nirenberg \cite{ln} later extended their results to elliptic systems under the same conditions when $\delta'$ is in a larger range $(0,\min\{\delta,\frac{\mu}{2(\mu+1)}\}]$. Recently, Dong and Xu \cite{dx} improved the regularity of $u$ by further extending the range of $\delta'$ from $(0, \min\{\delta,\frac{\mu}{2(\mu+1)}\}]$ to $(0,\min\{\delta,\frac{\mu}{\mu+1}\}]$, which seems to be sharp. The proof is based on a weak type-$(1,1)$ estimate and Campanato's method, which are different from the $L^{2}$-estimates used in \cite{ln,lv}.

Parabolic equations have also been studied in this setting. In \cite{ll}, Li and Li extended the interior estimates in \cite{ln} to parabolic systems with coefficients which are piecewise H\"{o}lder continuous in the space variables and smooth in the time variable, when the subdomains are cylindrical. See also \cite{fknn}, where the coefficients are independent of the time variable and the subdomains are also assumed to be cylindrical. We would like to mention that in \cite{d} optimal regularity of weak solutions was obtained when the coefficients and data are Dini continuous in the time variable and all but one spatial variable.

The current paper is a natural extension of \cite{dx} from the elliptic case to the parabolic case.
We substantially  improve the results in the aforementioned papers \cite{ll,fknn} in the following two aspects. First, we allow the subdomains to be non-cylindrical, and the interfacial boundaries to be $C^{1,\text{Dini}}$ in the spatial variables and $C^{\gamma_{0}}$ in the time variable, where $\gamma_{0}>1/2$. Second, we relax the regularity assumption on the leading coefficients, particularly in the time variable. We show in Theorem \ref{thm1} that $\mathcal{H}_{p}^{1}$ $(1<p<\infty)$ weak solutions to parabolic systems in divergence form are Lipschitz in all spatial variables and piecewise $C^{1/2,1}$ when the leading coefficients and data are of piecewise Dini mean oscillation and the lower-order coefficients are bounded. Besides, we obtain the local $L_{p}$-estimate for $\mathcal{H}_{1}^{1}$ weak solutions in Corollary \ref{coro loc}  by adapting the idea in  \cite{a,b}, and thus the results in  Theorem \ref{thm1} also hold for these solutions. When the leading coefficients and data have piecewise H\"{o}lder regularity, we prove the piecewise $C^{1,\delta'}$-regularity in the spatial variables and $C^{(1+\delta')/2}$-regularity in the time variable of weak solutions in Theorem \ref{thm holder}, where $\delta'$ is in the optimal range $(1,\min\{\delta,\frac{\mu}{\mu+1},2\gamma_{0}-1\}]$.
	
Our arguments in proving Theorems \ref{thm1} and \ref{thm holder} are different from those in \cite{fknn,ll,ln,lv}. The proofs below are based on Campanato's method, which was also used recently in \cite{dx}. 
The key point is to show the mean oscillation of $Du$ in balls or cylinders vanishes in a certain order as the radii of the  balls or cylinders go to zero. However, this method cannot be employed directly because $Du$ is discontinuous across the interfacial boundaries and we only  impose the assumption on the $L_{1}$-mean oscillation of the coefficients and data, so that the usual argument based on $L_{p}$ $(p>1)$ estimates does not work here. To overcome these difficulties, we first fix the coordinate system and derive weak type-$(1,1)$ estimates by using a duality argument. We then establish some interior H\"{o}lder regularity of $D_{x'}u$ and $\bar{U}:=\bar{A}^{d\beta}D_{\beta}u$ for parabolic systems with coefficients depending on one variable, say, $x^{d}$.  The desired results in Theorem \ref{thm1} are proved by adapting Campanato's approach in the $L_{p}$ setting for some $p\in(0,1)$.  The proof of the $C^{(1+\delta')/2}$-H\"{o}lder continuity in $t$ of weak solutions in Theorem \ref{thm holder} is more involved. We prove a weak type-$(1,1)$ estimate and apply Campanato's idea to $u$ itself instead of its first derivatives. For this, we introduce a set consisting of functions in $x$ which are linear in $x'$ and prove Lemma \ref{lem3.13} which plays a key role in estimating the difference between $u$ and its approximations in the $L_{q}$-mean sense, $q\in(0,1)$. Compared to \cite{dx}, this is new and can be considered as the main contribution of the current paper.

As a byproduct, in Theorem \ref{trans holder} we prove the existence, uniqueness, and $C^{1,\mu}$ regularity of weak solutions to transmission problems with $C^{1,\mu}$ interfaces in the parabolic setting, which is an extension of a recent result in \cite{css}. We also consider a more general case when the interfaces are $C^{1,\text{Dini}}$. See Theorem \ref{trans dini}. For the proof, we adapt an idea in \cite{dong} by solving certain auxiliary equations in subdomains with conormal boundary data and then reducing the transmission problem to a parabolic equation with piecewise H\"older (or Dini) inhomogeneous terms. In contrast to the elliptic case, here we cannot treat the derivatives of solutions to the auxiliary equations as inhomogeneous terms because their time derivatives are in Sobolev spaces of negative order. In this paper, we modify the argument in \cite{dong} by considering the difference of $u$ and these auxiliary solutions. As such, we need to extend these solutions to the whole domain, which is achieved by a partition of unity argument applied to each subdomain together with a flattening-reflection technique.

Throughout this paper, unless otherwise stated, $N$ denotes a constant, whose value may vary from line to line and independent of the distance between subdomains. We call it a {\it{universal constant}}.

The rest of this paper is organized as follows. We formulate the problem and state our main results, Theorems \ref{thm1} and \ref{thm holder}, in Section \ref{problem results}. In Section \ref{preliminaries}, we introduce some notation, definitions, and auxiliary lemmas used in this paper. The main result Theorem \ref{thm1} under the assumptions that the leading coefficients and data are of piecewise Dini mean oscillation is proved in Section \ref{proof thm1}, where we also give the proof of Corollary \ref{coro loc}. We prove Theorem \ref{thm holder} in Section \ref{section thm holder}. Section \ref{sec thm3} is devoted to a global weak type-$(1,1)$ estimate with respect to $A_{1}$ Muckenhoupt weights for solutions to parabolic systems. In Section \ref{transmission prb}, we state and prove Theorems \ref{trans holder} and \ref{trans dini} by adapting the method in \cite{dong}. In the Appendix, we prove a weighted $\mathcal{H}_{p,w}^{1}$-solvability and estimate for divergence form parabolic systems in nonsmooth domains with partially VMO coefficients.

\section{Problem formulation and main results}\label{problem results}
\subsection{Problem formulation}\label{prob formulation}

In this paper, we aim to establish gradient estimates for strongly parabolic systems in divergence form
\begin{equation}\label{systems}
\mathcal{P}u:=-u_{t}+D_{\alpha}(A^{\alpha\beta}D_{\beta}u+B^{\alpha}u)+\hat{B}^{\alpha}D_{\alpha}u+Cu=\Div g+f
\end{equation}
in a cylindrical domain $\cQ:=(-T,0)\times\cD$, where $T\in(0,\infty)$ and
$\cD$ is a bounded domain in $\mathbb R^{d}$. We assume that $\cQ$ contains $M$ disjoint time-dependent subdomains $\cQ_{j},j=1,\ldots,M,$ and the interfacial boundaries are $C^{1,\text{Dini}}$ in the spatial variables and $C^{\gamma_{0}}$ in the time variable, where $\gamma_{0}>1/2$. See the details in Definition \ref{def Dini}. We also assume that any point $(t,x)\in\cQ$ belongs to the boundaries of at most two of the $\cQ_{j}$'s.
Moreover, the Einstein summation convention over repeated indices are assumed throughout this paper. Here
$$
u=(u^{1},\ldots,u^{n})^{\top},\quad g_{\alpha}=(g_{\alpha}^{1},\ldots,g_{\alpha}^{n})^{\top},\quad f=(f^{1},\ldots,f^{n})^{\top}
$$
are (column) vector-valued functions,
$A^{\alpha\beta}, B^{\alpha}, \hat{B}^{\alpha}$ (often denoted by $A, B, \hat{B}$ for abbreviation), and $C$ are $n\times n$ matrices, which are bounded by a positive constant $\Lambda$. The leading coefficients matrices $A^{\alpha\beta}$ satisfy the strong parabolicity condition: there exists a number $\nu>0$ such that for any $\xi=(\xi_{\alpha}^{i})\in\mathbb R^{n\times d}$,
$$\nu|\xi|^{2}\leq A_{ij}^{\alpha\beta}\xi_{\alpha}^{i}\xi_{\beta}^{j},\quad|A^{\alpha\beta}|\leq\nu^{-1}.$$

To localize the problem, we slightly abuse the notation by taking $\cQ$ to be a unit cylinder $Q_{1}^{-}:=(-1,0)\times B_{1}$ and $z_{0}=(t_{0},x_{0})\in (-9/16,0)\times B_{3/4}$. The domain is fixed as follows. By suitable rotation and scaling, we may suppose that a finite number of subdomains lie in $Q_{1}^{-}$ and that they can be represented by
$$x^{d}=h_{j}(t,x'),\quad\forall~t\in(-1,0),~x'\in B'_{1},~j=1,\ldots,l<M,$$
where
\begin{equation}\label{eq10.42}
-1<h_{1}(t,x')<\dots<h_{l}(t,x')<1,
\end{equation}
$h_{j}(t,\cdot)\in C^{1,\text{Dini}}(B'_{1})$, and $h_{j}(\cdot,x')\in C^{\gamma_{0}}(-1,0)$, where $\gamma_{0}>1/2$. Set $h_{0}(t,x')=-1$ and $h_{l+1}(t,x')=1$. Then we have $l+1$ regions:
$$\cQ_{j}:=\{(t,x)\in \cQ: h_{j-1}(t,x')<x^{d}<h_{j}(t,x')\},\quad1\leq j\leq l+1.$$ We may suppose that there exists some $\cQ_{j_{0}}$, such that $(t_{0},x_{0})\in \Big((-9/16,0)\times B_{3/4}\Big)\cap \cQ_{j_{0}}$ and the closest point on $\partial_p \cQ_{j_{0}}\cap\{t=t_0\}$ to $(t_{0},x_{0})$ is $(t_{0},x'_{0},h_{j_{0}}(t_{0},x'_{0}))$, and $\nabla_{x'}h_{j_{0}}(t_{0},x'_{0})=0'$.
We introduce the $l+1$ ``strips"
$$\Omega_{j}:=\{(t,x)\in \cQ: h_{j-1}(t_{0},x'_{0})<x^{d}<h_{j}(t_{0},x'_{0})\},\quad1\leq j\leq l+1.$$

Denote by $\mathcal{A}$ the set of piecewise constant functions in each $\cQ_{j}$. We then further assume that $A$ is of piecewise Dini mean oscillation in $\cQ$, that is,
\begin{align}\label{def omega A}
\omega_{A}(r):=\sup_{z_{0}\in \cQ}\inf_{\hat{A}\in\mathcal{A}}\fint_{Q_{r}^{-}(z_{0})}|A(z)-\hat{A}|\ dz
\end{align}
satisfies the Dini condition, where $Q_{r}^{-}(z_{0}):=(t_0-r^2,t)\times B_{r}(x_{0})\subset \cQ$. The reader can refer to Definition \ref{piece Dini} about the Dini condition. For $\varepsilon>0$ small, we set
$$\cD_{\varepsilon}:=\{x\in \cD: \mbox{dist}(x,\partial \cD)>\varepsilon\}.$$

\subsection{Main results}
We state the main results of this paper.
\begin{theorem}\label{thm1}
Let $\cQ$ be defined as above. Let $\varepsilon\in (0,1)$, $p\in(1,\infty)$, and $\gamma\in(0,1)$. Assume that $A$, $B$, and $g$ are of piecewise Dini mean oscillation in $\cQ$, and $f, g\in L_{\infty}(\cQ)$. If $u\in \mathcal{H}_{p}^{1}(\cQ)$ is a weak solution to \eqref{systems} in $\cQ$, then $u\in C^{1/2,1}(\overline{\cQ_{j}}\cap((-T+\varepsilon,0)\times\cD_{\varepsilon}))$, $j=1,\ldots,M$.
Moreover, for any fixed $z_{0}\in (-T+\varepsilon,0)\times\cD_{\varepsilon}$, there exists a coordinate system associated with $z_{0}$, such that for all $z\in (-T+\varepsilon,0)\times\cD_{\varepsilon}$, we have
\begin{align*}
&|(D_{x'}u(z_{0}),U(z_{0}))-(D_{x'}u(z),U(z))|\nonumber\\
&\leq N\int_{0}^{|z_{0}-z|_{p}}\frac{\tilde{\omega}_{g}(s)}{s}\ ds+N|z_{0}-z|_{p}^{\gamma}\left(\|Du\|_{L_{1}(\cQ)}+\|g\|_{L_{\infty}(\cQ)}
+\|f\|_{L_{\infty}(\cQ)}+\|u\|_{L_{p}(\cQ)}\right)\nonumber\\
&\ + N\int_{0}^{|z_{0}-z|_{p}}\frac{\tilde{\omega}_{A}(s)}{s}\ ds\cdot\left(\|Du\|_{L_{1}(\cQ)}+\int_{0}^{1}\frac{\tilde{\omega}_{g}(s)}{s}\ ds+\|g\|_{L_{\infty}(\cQ)}+\|f\|_{L_{\infty}(\cQ)}+\|u\|_{L_{p}(\cQ)}\right),
\end{align*}
where $U=A^{d\beta}D_{\beta}u+B^{d}u-g_{d}$, $|\cdot|_{p}$ is the parabolic distance defined in \eqref{para dis}, $N$ depends on $n,d,M,p,\Lambda,\nu,\varepsilon,\omega_{B}$, and the $C^{1,\text{Dini}}$ and $C^{\gamma_{0}}$ characteristics of $\cQ_{j}$ with respect to $x$ and $t$, respectively,  and $\tilde\omega_{\bullet}(t)$ is a Dini function derived from $\omega_{\bullet}(t)$. See \eqref{tilde phi}.
\end{theorem}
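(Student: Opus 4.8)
The plan is to follow Campanato's scheme adapted to the $L_p$ setting for some $p\in(0,1)$, as announced in the introduction. The statement is local in nature: after the usual reductions (covering $(-T+\varepsilon,0)\times\cD_\varepsilon$ by finitely many cylinders, rescaling and rotating so that the relevant interface $x^d=h_{j_0}(t,x')$ passes through a point with $\nabla_{x'}h_{j_0}=0'$), it suffices to prove the pointwise oscillation estimate for $(D_{x'}u,U)$ inside a fixed unit cylinder $Q_1^-$ with $z_0$ near the center. The first step is to freeze the interfacial geometry: by the flattening change of variables associated with the $h_j$'s, one reduces to the model situation of coefficients $\bar A=\bar A(x^d)$ depending only on the transversal variable, with the subdomains replaced by the ``strips'' $\Omega_j$. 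Here one must track carefully how the $C^{1,\mathrm{Dini}}$ regularity in $x'$ and $C^{\gamma_0}$ regularity in $t$ of the interfaces feed into the mean oscillation of the transformed coefficients and data, producing the Dini moduli $\tilde\omega_A$, $\tilde\omega_g$ via \eqref{tilde phi}.

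Next I would establish the two auxiliary ingredients needed for the iteration. First, a weak type-$(1,1)$ estimate for the system $\mathcal Pu=\Div g+f$ in the model domain, proved by a duality argument against solutions of the adjoint system: this is what lets us work with only the $L_1$-mean oscillation of $A$ and $g$, and with $\|Du\|_{L_1}$ on the right-hand side. Second, interior Hölder regularity of $D_{x'}u$ and of the ``good'' transversal combination $\bar U:=\bar A^{d\beta}D_\beta u$ for the frozen-coefficient system $D_\alpha(\bar A^{\alpha\beta}(x^d)D_\beta v)=0$; the point is that although $D_{x^d}v$ jumps across $\{x^d=h_j(t_0,x_0')\}$, the quantities $D_{x'}v$ and $\bar U$ are continuous and in fact Hölder continuous, because differentiating the equation tangentially and using the conormal structure removes the discontinuity. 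This is the parabolic analogue of the decomposition used in \cite{dx}.

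With these in hand, the core of the proof is the decay estimate: for a solution $u$ of the full system in $Q_r^-(z_0)$, compare $u$ with the solution $v$ of the frozen, homogeneous model problem in $Q_r^-(z_0)$ having the same boundary data, estimate $\fint_{Q_r^-}|D_{x'}(u-v)| + |U-\bar U|$ by the weak-$(1,1)$ bound in terms of $\omega_A(r)\|Du\|_{L_1}$, $\omega_g(r)$, $r\|f\|_{L_\infty}$, $r\|g\|_{L_\infty}$ and the contribution of the lower-order terms, and combine this with the interior Hölder decay of $(D_{x'}v,\bar U)$ on the strips. Iterating over dyadic radii and summing the Dini series yields that the $L_q$-mean oscillation ($q\in(0,1)$) of $(D_{x'}u,U)$ over $Q_r^-(z_0)$ decays at the rate $\int_0^r \tilde\omega_g(s)/s\,ds + r^\gamma(\cdots) + \int_0^r \tilde\omega_A(s)/s\,ds\cdot(\cdots)$; by a Campanato-type characterization this gives the pointwise modulus-of-continuity bound in the statement, after transferring back through the flattening map. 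The restoration of the polynomial term $r^\gamma(\|Du\|_{L_1}+\cdots+\|u\|_{L_p})$ comes from absorbing, via the De Giorgi--Nash--Moser theory and the $\mathcal H_p^1$ a priori estimate, the low-order contributions and the error from the time-regularity mismatch $\gamma_0>1/2$. Finally, the claimed piecewise $C^{1/2,1}$-regularity follows because the above shows $D_{x'}u$ and $U$ are continuous up to $\overline{\cQ_j}$, hence so is $D_{x^d}u$ inside each $\cQ_j$ (solving for it from $U$ and the now-continuous data), while the $C^{1/2}$-in-time bound is read off from the parabolic distance $|z_0-z|_p$ appearing on the right.

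The main obstacle I anticipate is the interplay between the non-cylindrical, merely $C^{\gamma_0}$-in-time interfaces and the requirement $\gamma_0>1/2$: the flattening map is only Hölder in $t$, so one does not get a clean frozen-coefficient problem, and the time-regularity of the transformed coefficients is exactly at the borderline where the Dini iteration still closes. Handling this — quantifying how the $C^{\gamma_0}$ modulus of $h_j$ contributes to $\tilde\omega_A$ and checking that the resulting series converges, and simultaneously controlling the error from treating the $t$-dependence of $\Omega_j$ as a perturbation — is where most of the technical work will go, and is the step where the restriction $\gamma_0>1/2$ is used.
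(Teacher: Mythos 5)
Your overall skeleton does match the paper's: reduction to a system without lower-order terms, a weak type-$(1,1)$ estimate proved by duality against the adjoint problem, interior H\"older estimates for $D_{x'}u$ and $\bar U=\bar A^{d\beta}(x^d)D_\beta u$ for coefficients depending only on $x^d$, and a Campanato iteration for the $L_q$-mean oscillation of $(D_{x'}u,U)$ with $q\in(0,1)$. However, your very first step --- ``freeze the interfacial geometry by the flattening change of variables associated with the $h_j$'s'' --- contains a genuine gap, and it is not one that can be absorbed later as technical work. The interfaces are only $C^{\gamma_0}$ in $t$ with $\gamma_0>1/2$ possibly strictly less than $1$, so the time-dependent flattening map $(t,x',x^d)\mapsto(t,x',x^d-h_j(t,x'))$ is not Lipschitz in time: the transformed equation would involve $\partial_t h_j$, which does not exist as a function, and even formally the resulting drift term is neither in divergence form nor small in $L_1$-mean, so it cannot be fed into the Dini-mean-oscillation machinery. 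You acknowledge this obstacle in your last paragraph but leave it unresolved, which means the route as proposed does not close.

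The paper resolves this without any change of variables. After rotating so that $\nabla_{x'}h_{j_0}(t_0,x_0')=0'$, it keeps the original coordinates and replaces the curved regions $\cQ_j$ by the flat strips $\Omega_j$ whose heights are frozen at the single point $(t_0,x_0')$; the piecewise constant coefficients $\bar A(x^d)$ are defined on these strips. The geometric mismatch is then treated purely measure-theoretically: Lemma \ref{volume} bounds $|(\cQ_j\Delta\Omega_j)\cap Q_r^-(z_0)|\le N\omega_1(r)r^{d+2}+Nr^{d+1+2\gamma_0}$, which by \eqref{est A} gives $\fint_{Q_r^-(z_0)}|\hat A-\bar A|\le N(\omega_1(r)+r^{2\gamma_0-1})$, i.e.\ the curved, merely $C^{\gamma_0}$-in-time interfaces are converted into an additional Dini modulus of the coefficients --- and this is exactly where $\gamma_0>1/2$ is used (so that $r^{2\gamma_0-1}$ is Dini). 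You would need to replace your flattening step by this (or an equivalent) perturbation-in-mean argument. A further, smaller omission: since the coordinate system is chosen per point $z_0$, the final pointwise estimate requires comparing $(D_{x'}u,U)$ evaluated in two different coordinate systems at nearby points; the paper handles this with the normal-vector estimate $|n_1-n_2|\le N\omega_1(|z_0-z_1|_p)$ and a two-case analysis according to the distance of the points to the interface, and also needs the separate step (Lemma 4.1) deducing the $C^{1/2}$-in-time regularity of $u$ itself from the boundedness of $Du$, neither of which appears in your sketch.
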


By using a duality argument and Theorem \ref{thm1}, we obtain the following result.
\begin{corollary}\label{coro loc}
Under the same conditions as in Theorem \ref{thm1}, if $u\in \mathcal{H}_{1}^{1}(\cQ)$ is a weak solution to \eqref{systems} in $\cQ$, then $u\in \mathcal{H}_{p,\text{loc}}^{1}(\cQ)$ for some $p\in(1,\infty)$ and for any $\cQ'\subset\subset\cQ$,
\begin{align*}
\|u\|_{\mathcal{H}_{p}^{1}(\cQ')}\leq N\left(\|g\|_{L_{\infty}(\cQ)}+\|f\|_{L_{\infty}(\cQ)}+\|u\|_{\mathcal{H}_{1}^{1}(\cQ)}\right).
\end{align*}
Furthermore, the conclusion of Theorem \ref{thm1} still holds true.
\end{corollary}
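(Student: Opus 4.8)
The plan is to bootstrap the integrability of a weak solution $u \in \mathcal{H}_1^1(\cQ)$ up to some $\mathcal{H}_p^1$ with $p > 1$ by a duality argument, after which Theorem \ref{thm1} applies directly. First I would fix $\cQ' \subset\subset \cQ'' \subset\subset \cQ$ and a cutoff $\eta$ supported in $\cQ''$ with $\eta \equiv 1$ on $\cQ'$; multiplying \eqref{systems} by $\eta$ (and handling the lower-order terms $B^\alpha u$, $\hat B^\alpha D_\alpha u$, $Cu$ as part of the right-hand side, using that they are bounded and that $u, Du \in L_1$) reduces matters to a parabolic system $\mathcal{P}(\eta u) = \Div \tilde g + \tilde f$ on a slightly larger cylinder with $\tilde g \in L_1$, $\tilde f \in L_1 + H^{-1}_1$-type terms, plus commutator terms involving $\nabla \eta$ which are again in $L_1$. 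The point is that one cannot test against $u$ itself since $u$ is only $\mathcal{H}_1^1$, so instead I would estimate $\|D(\eta u)\|_{L_p(\cQ'')}$ for a suitable $p \in (1,\infty)$ by pairing $D(\eta u)$ against an arbitrary $\phi \in L_{p'}$ and solving the adjoint (backward) parabolic system $\mathcal{P}^* v = \Div \phi$ with zero lateral/terminal data, whose solution satisfies, by the weighted $\mathcal{H}_{p',w}^1$-solvability established in the Appendix (applied with partially-VMO leading coefficients, which hold here since piecewise Dini mean oscillation implies partially VMO), the bound $\|Dv\|_{L_{p'}} \le N\|\phi\|_{L_{p'}}$ together with the crucial pointwise gradient estimate for $v$ coming from Theorem \ref{thm1} itself — namely $v$ and hence $Dv$ is bounded in the interior in terms of $\|Dv\|_{L_1} + \|\phi\|_{L_\infty}$-type quantities when $\phi$ is taken in a dense class of bounded functions.

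More precisely, the mechanism (this is the idea adapted from \cite{a,b} that the authors allude to) is: choose $p$ close to $1$; for $\phi \in C_c^\infty$ bounded, let $v$ solve the adjoint problem in $\cQ''$. Then
\begin{align*}
\int_{\cQ'} D(\eta u)\cdot \phi\,dz
&= \int_{\cQ''} D(\eta u)\cdot \phi\,dz
= -\int_{\cQ''} \eta u \cdot \Div\phi\,dz + (\text{cutoff terms})\\
&= -\int_{\cQ''} \eta u\, \mathcal{P}^* v\,dz + \cdots
= -\int_{\cQ''}(\text{data})\cdot(\text{derivatives of }v) + \cdots,
\end{align*}
and now Theorem \ref{thm1} gives an $L_\infty$ (indeed $C^{1/2,1}$) bound on $Dv$ on the support of the relevant cutoffs in terms of $\|Dv\|_{L_1(\cQ'')} + \|\phi\|_{L_\infty}$; combined with $\|Dv\|_{L_1(\cQ'')} \le N\|\phi\|_{L_{p'}}$ (interpolating the $L_{p'}$ bound, or via a separate $\mathcal{H}_1^1$ estimate) one obtains $|\int_{\cQ'} D(\eta u)\cdot\phi| \le N(\|g\|_{L_\infty} + \|f\|_{L_\infty} + \|u\|_{\mathcal{H}_1^1(\cQ)})\|\phi\|_{L_{p'}}$. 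Taking the supremum over such $\phi$ gives $\|Du\|_{L_p(\cQ')} \le N(\|g\|_{L_\infty}+\|f\|_{L_\infty}+\|u\|_{\mathcal{H}_1^1(\cQ)})$; a parabolic Poincaré-type inequality plus the equation then upgrades this to the full $\mathcal{H}_p^1(\cQ')$ norm bound. Once $u \in \mathcal{H}_{p,\text{loc}}^1$, the hypotheses of Theorem \ref{thm1} are met on any interior subcylinder, so its conclusion holds verbatim.

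The main obstacle I expect is the circularity/consistency issue: Theorem \ref{thm1} is stated for $\mathcal{H}_p^1$ solutions with $p > 1$, yet it is being invoked for the \emph{adjoint} solution $v$ (which does live in the right space by the Appendix solvability, so this is fine) — the care needed is to make sure the pointwise estimate of Theorem \ref{thm1} is applied only to $v$ and not prematurely to $u$, and to track that the adjoint system $\mathcal{P}^*$ still has leading coefficients of piecewise Dini mean oscillation (true, since the transpose of $A^{\alpha\beta}$ with indices swapped inherits the same oscillation modulus) so that Theorem \ref{thm1} genuinely applies to it. A secondary technical point is handling the lower-order terms $\hat B^\alpha D_\alpha u$ and $Cu$, whose $L_1$ norms are controlled by $\|u\|_{\mathcal{H}_1^1}$, and choosing $p'$ large enough (equivalently $p$ close enough to $1$) that the Sobolev embedding needed to pass from $\|Dv\|_{L_{p'}}$-control to the pointwise bound region is comfortable; these are routine once the duality scheme is set up, so the heart of the argument is the adjoint-problem pairing above.
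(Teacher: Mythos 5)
Your overall scheme --- duality against the adjoint problem $\mathcal{P}^{*}v=\Div \phi$, applying Theorem \ref{thm1} to the adjoint solution $v$, and testing with cutoffs --- is the same as the paper's. However, the step where you close the quantitative estimate fails as written. The pointwise gradient bound that Theorem \ref{thm1} gives for $v$ is in terms of $\|Dv\|_{L_{1}(\cQ)}+\|\phi\|_{L_{\infty}(\cQ)}+\|v\|_{L_{p'}(\cQ)}$ \emph{plus} the Dini integral $\int_{0}^{1}\tilde{\omega}_{\phi}(s)/s\,ds$ of the data; neither $\|\phi\|_{L_{\infty}}$ nor that Dini integral is controlled by $N\|\phi\|_{L_{p'}}$ for $\phi$ in a dense subclass of $L_{p'}$. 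So "combined with $\|Dv\|_{L_{1}}\le N\|\phi\|_{L_{p'}}$" you do \emph{not} get $|\int D(\eta u)\cdot\phi|\le N(\dots)\|\phi\|_{L_{p'}}$: the right-hand side retains an unremovable $\|\phi\|_{L_{\infty}}$ contribution, and then the duality argument cannot produce $Du\in L_{p,\text{loc}}$, because boundedness of the functional with respect to the $L_{p'}$ norm is exactly what is needed.

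The paper's proof uses Theorem \ref{thm1} for $v$ only \emph{qualitatively}, to know that $v$ and $Dv$ are bounded on interior subdomains so that $\zeta v$ is an admissible test function against the $\mathcal{H}_{1}^{1}$ solution $u$ (and $\zeta u$ against the adjoint equation). The quantitative estimate is closed differently: by the parabolic Sobolev embedding $u\in L_{\frac{d+2}{d+1}}(\cQ)$ one fixes $p\in(1,\frac{d+2}{d+1})$ so that $p'>d+2$; the energy estimate plus the interior estimate of Lemma \ref{lem loc lq} give $\|v\|_{\mathcal{H}_{p'}^{1}((-T+\varepsilon,0)\times\cD_{\varepsilon})}\le N\|h\|_{L_{p'}(\cQ)}$, and Sobolev--Morrey (using $p'>d+2$) gives $\|v\|_{L_{\infty}}\le N\|h\|_{L_{p'}}$ on the interior. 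In the identity obtained by combining the two weak formulations, the terms pairing $u$ with $Dv$ are then bounded by $\|u\|_{L_{p}}\|Dv\|_{L_{p'}}$, the terms with $Du$ by $\|Du\|_{L_{1}}\|v\|_{L_{\infty}}$, and the terms with $g$ by $\|g\|_{L_{\infty}}\|Dv\|_{L_{1}}$, so no $L_{\infty}$ norm of $Dv$ (hence no $\|\phi\|_{L_{\infty}}$) ever enters the bound. Your parenthetical remark about taking $p'$ large is exactly the right ingredient, but it must be used to bound $\|v\|_{L_{\infty}}$, not $\|Dv\|_{L_{\infty}}$; with that replacement (and noting the minor slip that $\int_{\cQ'}D(\eta u)\cdot\phi\neq\int_{\cQ''}D(\eta u)\cdot\phi$ in general, so the functional should be taken over the larger cylinder), your plan becomes the paper's argument.
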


The next theorem shows that if we impose piecewise H\"{o}lder regularity assumption on the coefficients and data, then $D_{x'}u$ and $U$ are H\"{o}lder continuous.
\begin{theorem}\label{thm holder}
Let $\cQ$ be defined as above and the boundary condition on each subdomain $\cD_{j}$ be replaced with $C^{1,\mu}$. Let $\varepsilon\in (0,1)$ and $p\in(1,\infty)$. Assume that $A, B, g\in C^{\delta/2,\delta}(\overline{\cQ}_{j})$ with $\delta\in \Big(0,\mu/(1+\mu)\Big]$, and $f\in L_{\infty}(\cQ)$. If $u\in \mathcal{H}_{p}^{1}(\cQ)$ is a weak solution to \eqref{systems} in $\cQ$, then $u\in C^{1/2,1}(\overline{\cQ_{j}}\cap((-T+\varepsilon,0)\times\cD_{\varepsilon}))$, $j=1,\ldots,M$. Moreover, for any fixed $z_{0}\in (-T+\varepsilon,0)\times\cD_{\varepsilon}$, there exists a coordinate system associated with $z_{0}$ such that for all $z\in (-T+\varepsilon,0)\times\cD_{\varepsilon}$, we have
\begin{align}\label{Lip Du}
&|D_{x'}u(z_{0})-D_{x'}u(z)|+|U(z_{0})-U(z)|\nonumber\\
&\leq N|z_{0}-z|_{p}^{\delta'}
\bigg(\sum_{j=1}^{M}|g|_{\delta/2,\delta;\overline{\cQ}_{j}}
+\|f\|_{L_{\infty}(\cQ)}+\|u\|_{L_{p}(\cQ)}+\|Du\|_{L_{1}(\cQ)}\bigg),
\end{align}
and
\begin{align}\label{angle u t}
\langle u\rangle_{1+\delta';Q^-_{1/2}}\leq N\bigg(\sum_{j=1}^{M}|g|_{\delta/2,\delta;\overline{\cQ}_{j}}
+\|f\|_{L_{\infty}(\cQ)}+\|u\|_{L_{p}(\cQ)}+\|Du\|_{L_{1}(\cQ)}\bigg),
\end{align}
where $\delta'=\min\{\delta,2\gamma_{0}-1\}$, $N$ depends on $n$, $d$, $M$, $\delta$, $\mu$, $\nu$, $\Lambda$, $\varepsilon$, $p$, $\|A\|_{C^{\delta/2,\delta}(\overline{\cQ}_{j})}$, $\|B\|_{C^{\delta/2,\delta}(\overline{\cQ}_{j})}$, and the $C^{1,\mu}$ and $C^{\gamma_{0}}$ norms of $\cQ_{j}$ with respect to $x$ and $t$, respectively.
\end{theorem}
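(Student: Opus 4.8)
The plan is to follow the Campanato-type scheme already laid out in the introduction, but carried out for $u$ itself rather than for $Du$, and with the mean oscillation replaced by a genuine Hölder decay rate thanks to the stronger piecewise $C^{\delta/2,\delta}$ hypothesis. First I would reduce to the model situation: fix $z_0$, choose the coordinate system adapted to $z_0$ as in the statement (so that $\nabla_{x'}h_{j_0}(z_0)=0'$ and the closest interface point is directly below), and localize so that only the coefficients depend essentially on $x^d$. Using Theorem \ref{thm1} we already know $u\in C^{1/2,1}$ up to each $\overline{\cQ_j}$, so $D_{x'}u$ and $U=A^{d\beta}D_\beta u+B^d u-g_d$ are well-defined and bounded; the task is to upgrade the modulus of continuity to the power $\delta'=\min\{\delta,2\gamma_0-1\}$. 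For this I would run the standard iteration estimating, at each dyadic scale $r$, the $L_q$-mean oscillation ($q\in(0,1)$) of $u$ minus an appropriately chosen comparison function drawn from the set of functions that are linear in $x'$ and suitably adapted in $(x^d,t)$ — this is exactly the set introduced for Lemma \ref{lem3.13}. The weak type-$(1,1)$ estimate proved earlier (the parabolic analogue of the one in \cite{dx}) is what allows one to pass to $L_q$ with $q<1$ and thereby absorb the merely-$L_1$ control on $Du$ and the $L_\infty$ control on $f$.

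The core of the argument is the one-step decay estimate. At scale $r$ I would freeze the coefficients at $z_0$, solve the constant-coefficient (in the $x'$ variables) transmission problem in the strip geometry $\Omega_j$, and compare $u$ with the solution of that problem having the same boundary data on $Q_r^-(z_0)$. The difference solves a system whose right-hand side is controlled by (i) the oscillation of $A,B,g$, which is $O(r^\delta)$ by the piecewise Hölder assumption, (ii) the $C^{\gamma_0}$ in time and $C^{1,\mu}$ in space flatness of the interface $h_{j_0}$, which after the choice $\nabla_{x'}h_{j_0}(z_0)=0'$ contributes an error $O(r^{\mu/(1+\mu)})$ in space and $O(r^{2\gamma_0-1})$ in time once rescaled parabolically, and (iii) a lower-order term from $Cu+\hat B^\alpha D_\alpha u$ which is harmless. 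The interior Hölder regularity of $D_{x'}v$ and of $\bar U=\bar A^{d\beta}D_\beta v$ for solutions $v$ of the frozen-coefficient problem — established in the earlier sections for coefficients depending only on $x^d$ — gives the good decay for the comparison function: its relevant quantities decay like $\kappa r^{\delta''}$ for any $\delta''<1$ at the next scale, and in particular for $\delta''=\delta'$. Combining, one obtains a recursive inequality of the form
\begin{equation*}
\Phi(\kappa r)\le C\kappa^{\delta'}\Phi(r)+C r^{\delta'}\bigl(\textstyle\sum_j|g|_{\delta/2,\delta;\overline{\cQ_j}}+\|f\|_{L_\infty}+\|u\|_{L_p}+\|Du\|_{L_1}\bigr),
\end{equation*}
where $\Phi(r)$ is the $L_q$-distance of $u$ at scale $r$ from the optimal comparison function; choosing $\kappa$ small and iterating yields $\Phi(r)\lesssim r^{\delta'}(\cdots)$, which is Campanato's characterization of $C^{\delta'}$-regularity of $D_{x'}u$, $U$, and simultaneously of $\langle u\rangle_{1+\delta'}$ in the parabolic Hölder scale. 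Summing the geometric series in the oscillations (which is why the constant, not a Dini integral, appears on the right) gives \eqref{Lip Du} and \eqref{angle u t}.

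The main obstacle I expect is handling the time variable and the non-cylindrical interfaces together in the comparison step: because $u_t$ lives in a negative-order Sobolev space, one cannot simply treat $u_t$ or the derivatives of auxiliary interface-flattening functions as bona fide inhomogeneous terms, and the time-regularity of $h_{j_0}$ is only $C^{\gamma_0}$ with $\gamma_0$ possibly just above $1/2$, so the error from straightening the interface in time is exactly of borderline order $2\gamma_0-1$ — this is what forces the cap $\delta'\le 2\gamma_0-1$ and requires the careful bookkeeping encapsulated in Lemma \ref{lem3.13}. Concretely, Lemma \ref{lem3.13} must be invoked to show that after subtracting the best linear-in-$x'$ approximation, the remaining $L_q$-mean of $u$ at the next scale is controlled with the right power, and that the time-increment of the approximation is itself $O(r^{1+\delta'})$; getting the geometry of the strips $\Omega_j$ versus the curved regions $\cQ_j$ to match up to this precision, uniformly in the (irrelevant) distances between interfaces, is the delicate part. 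Once that lemma is in hand, the rest is a routine, if lengthy, iteration and a covering/translation argument to remove the restriction that $z_0$ sit at a prescribed location, exactly parallel to the elliptic treatment in \cite{dx}.
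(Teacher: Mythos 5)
Your overall plan does coincide with the paper's: \eqref{Lip Du} follows by rerunning the scheme of Theorem \ref{thm1} once the piecewise H\"older hypotheses and the $C^{1,\mu}$/$C^{\gamma_0}$ interfaces give $\omega_A(r)+\omega_B(r)+\omega_g(r)\le Nr^{\delta'}$ (this is Lemma \ref{difference holder}), and \eqref{angle u t} is proved by a Campanato iteration on $u$ itself against the class of functions linear in $x'$, using a weak type-$(1,1)$ estimate and Lemma \ref{lem3.13}. But your central quantitative step is off by one order, and the conclusion you draw from it does not follow. To control $\langle u\rangle_{1+\delta'}$ one must show that the $L_q$-distance of $u$ from the comparison class decays like $r^{1+\delta'}$; your displayed recursion $\Phi(\kappa r)\le C\kappa^{\delta'}\Phi(r)+Cr^{\delta'}(\cdots)$ yields at best $\Phi(r)\lesssim r^{\delta'}$ (with equal exponents it even produces a logarithm), which is Campanato decay corresponding to $u\in C^{\delta'}$ and gives no information on first derivatives nor on $|u(t_0,x)-u(s,x)|\lesssim |t_0-s|^{(1+\delta')/2}$. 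In the paper the homogeneous part decays like $\kappa^{2}$ (because $w_t$, $D_{x'}w$, and $W$ are bounded for the frozen-coefficient system, cf.\ \eqref{eq3.45}), the inhomogeneity contributes $r^{1+\delta'}$, and the $L_q$ bound on the correction requires a weak type-$(1,1)$ estimate for the solution $v$ itself (Lemma \ref{weak est v}), not the gradient estimate Lemma \ref{weak est barv} that you invoke.

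A second, related gap: even with the correct decay $r^{1+\delta'}$, the comparison class here is not a finite-dimensional polynomial space (it contains an arbitrary profile in $x^d$ built from $(\bar A^{dd})^{-1}$ and $\bar g_d$), and the approximants at different base points are expressed in different coordinate systems, so passing from the integral decay to the pointwise bound on $|u(z_0)-u(z_1)|$ for a purely temporal shift is not the ``routine'' Campanato step you describe. The paper spends Steps 2--3 of Section \ref{section thm holder} on exactly this: Lemma \ref{lem3.13} gives $|\ell^{r,z_0}_\beta-\ell^{2r,z_0}_\beta|\le NC_0r^{\delta'}$, the limits of the coefficients are identified with $u(z_0)$, $D_{x'}u(z_0)$, $U(z_0)$ (via local $L_p$ estimates applied to $u-p^{r,z_0}$), and then $p^{2r,z_0}$ and $p^{2r,z_1}$ are compared using \eqref{Lip Du}, the coordinate-change estimate \eqref{diffe coor}, and $|x-y|\le Nr^{1+\delta'}$. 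In particular \eqref{Lip Du} is an input to that comparison, so it should be established first (directly from the Theorem \ref{thm1} machinery with the H\"older moduli), rather than ``simultaneously'' from the $u$-level iteration as you propose; otherwise the argument is circular or at least incomplete as sketched.
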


\section{Preliminaries}\label{preliminaries}
In this section, we first introduce the notation and definitions. Then we prove some properties of our domain, coefficients, and data. Finally we establish the existence and $L_p$-estimates of solutions to parabolic systems with coefficients satisfying certain regularity assumptions. Besides, we also prove some auxiliary estimates that will be used in the proofs of our main results.

\subsection{Notation and definitions}

We follow most of the notation in \cite{dx} and \cite{d}. We write $z=(t,x)$, $z'=(t,x')$, and $x=(x^{1},\ldots,x^{d})=(x',x^{d})$, where $d\geq2$. We denote
$$B_{r}(x):=\{y\in\mathbb R^{d}: |y-x|<r\},\quad B'_{r}(x'):=\{y'\in\mathbb R^{d-1}: |y'-x'|<r\},$$
$$Q_{r}^{-}(t,x):=(t-r^2,t)\times B_{r}(x),\quad Q_{r}^{-'}(t,x):=(t-r^2,t)\times B'_{r}(x'),$$
$$
Q^{+}_{r}(t,x):=(t,t+r^2)\times B_{r}(x),\quad Q_{r}(t,x):=(t-r^2,t+r^2)\times B_{r}(x),
$$
and the parabolic distance between two points $z_1=(t_1,x_1)$ and $z_2=(t_2,x_2)$ by
\begin{equation}\label{para dis}
|z_1-z_2|_{p}:=\max\left\{|t_1-t_2|^{1/2},|x_1-x_2|\right\}.
\end{equation}
We use $B_{r}:=B_{r}(0)$, $B'_{r}:=B'_{r}(0')$, $Q_{r}^{-}:=Q_{r}^{-}(0,0)$, $Q^{-'}_{r}:=Q^{-'}_{r}(0,0)$,  $\cQ_{r}^{-}(t,x):=\cQ\cap Q_{r}^{-}(t,x)$, and $\cQ_{r}(t,x):=\cQ\cap Q_{r}(t,x)$ for abbreviation, respectively. The parabolic boundary of $\cQ=(a,b)\times\cD$ is defined by
$$\partial_{p}\cQ=((a,b)\times\partial\cD)\cup(\{a\}\times\overline \cD).$$
The following notation will also be used:
$$D_{t}u=u_{t},\quad D_{x'}u=u_{x'},\quad DD_{x'}u=u_{xx'}.$$
For a function $f$ defined in $\mathbb R^{d+1}$, we set
$$(f)_{\cQ}=\frac{1}{|\cQ|}\int_{\cQ}f(t,x)\ dx\ dt=\fint_{\cQ}f(t,x)\ dx\ dt,$$
where $|\cQ|$ is the $d+1$-dimensional Lebesgue measure of $\cQ$. For $\gamma\in(0,1]$, we denote the $C^{\gamma/2,\gamma}$ semi-norm by
$$[u]_{\gamma/2,\gamma;\cQ}:=\sup_{\begin{subarray}{1}(t,x),(s,y)\in\cQ\\
(t,x)\neq (s,y)
\end{subarray}}\frac{|u(t,x)-u(s,y)|}{|t-s|^{\gamma/2}+|x-y|^{\gamma}},$$
and the $C^{\gamma/2,\gamma}$ norm by
$$|u|_{\gamma/2,\gamma;\cQ}:=[u]_{\gamma/2,\gamma;\cQ}+|u|_{0;\cQ},\quad \text{where}\,\,|u|_{0;\cQ}=\sup_{\cQ}|u|.$$
We define
$$[u]_{(1+\gamma)/2,1+\gamma;\cQ}:=[Du]_{\gamma/2,\gamma;\cQ}+\langle u\rangle_{1+\gamma;\cQ}$$
and
$$|u|_{(1+\gamma)/2,1+\gamma;\cQ}:=[u]_{(1+\gamma)/2,1+\gamma;\cQ}+|u|_{0;\cQ},$$
where
$$\langle u\rangle_{1+\gamma;\cQ}:=\sup_{\begin{subarray}{1}(t,x),(s,x)\in\cQ\\
\quad t\neq s
\end{subarray}}\frac{|u(t,x)-u(s,x)|}{|t-s|^{(1+\gamma)/2}}.$$
Next we define the semi-norm
$$[u]_{z',(1+\gamma)/2,1+\gamma;\cQ}:=[D_{x'}u]_{z',\gamma/2,\gamma;\cQ}+\langle u\rangle_{1+\gamma;\cQ}$$
and the norm
$$|u|_{z',(1+\gamma)/2,1+\gamma;\cQ}:=[u]_{z',(1+\gamma)/2,1+\gamma;\cQ}+|u|_{0;\cQ}+|Du|_{0;\cQ},$$
where
$$[D_{x'}u]_{z',\gamma/2,\gamma;\cQ}:=\sup_{\begin{subarray}{1}(t,x),(s,y)\in\cQ\\
x^{d}=y^{d}, (t,x)\neq(s,y)
\end{subarray}}\frac{|D_{x'}u(t,x)-D_{x'}u(s,y)|}{|t-s|^{\gamma/2}+|x-y|^{\gamma}}.$$
Denote $C_{z'}^{(1+\gamma)/2,1+\gamma}$ by the set of all bounded measurable functions $u$ for which $Du$ are bounded and continuous in $\cQ$ and $[u]_{z',(1+\gamma)/2,1+\gamma;\cQ}<\infty$.

We introduce some Lebesgue spaces which will be utilized throughout the paper. For $p\in(1,\infty)$, we denote
$$W_{p}^{1,2}(\cQ):=\{u: u, u_t, Du, D^2u\in L_{p}(\cQ)\}.$$
We define the solution spaces $\mathcal{H}_{p}^{1}(\cQ)$ as follows. Set
$$\mathbb{H}_{p}^{-1}(\cQ):=\bigg\{f: f=\sum_{|\alpha|\leq1}D^{\alpha}f_{\alpha}, f_{\alpha}\in L_{p}(\cQ)\bigg\},$$
$$\|f\|_{\mathbb{H}_{p}^{-1}(\cQ)}:=\inf\bigg\{\sum_{|\alpha|\leq1}\|f_{\alpha}\|_{L_{p}(\cQ)}: f=\sum_{|\alpha|\leq1}D^{\alpha}f_{\alpha}\bigg\},$$
and
$$\mathcal{H}_{p}^{1}(\cQ):=\{u: u_{t}\in \mathbb{H}_{p}^{-1}(\cQ), D^{\alpha}u\in L_{p}(\cQ), 0\leq|\alpha|\leq1\},$$
$$\|u\|_{\mathcal{H}_{p}^{1}(\cQ)}:=\|u_{t}\|_{\mathbb{H}_{p}^{-1}(\cQ)}+\sum_{|\alpha|\leq1}\|D^{\alpha}u\|_{L_{p}(\cQ)}.$$
Define $C_{0}^{\infty}([-1,0]\times\cD)$ to be the collection of infinitely differentiable functions $\phi:=\phi(t,x)$ with compact supports in $[-1,0]\times\cD$. Finally, we set $\mathcal{\mathring{H}}_{p}^{1}((-1,0)\times\cD)$ to be the closure of $C_{0}^{\infty}([-1,0]\times\cD)$ in $\mathcal{H}_{p}^{1}((-1,0)\times\cD)$.

\begin{definition}\label{piece Dini}
We say that a continuous increasing function $\omega: [0,1]\rightarrow\mathbb R$ satisfies the Dini condition provided that $\omega(0)=0$ and
$$\int_{0}^{r}\frac{\omega(s)}{s}\ ds<+\infty,\quad\forall~r\in(0,1).$$
\end{definition}

\begin{definition}\label{def Dini}
Let $\cD\subset\mathbb R^{d}$ be open and bounded. We say that $\partial\cD$ is $C^{1,\text{Dini}}$ if for each point $x_{0}\in\partial\cD$, there exists $R_0\in (0,1/8)$ independent of $x_{0}$ and a $C^{1,\text{Dini}}$ function (i.e., $C^{1}$ function whose first derivatives are Dini continuous) $\varphi: B'_{R_0}\rightarrow\mathbb R$ such that (upon relabeling and reorienting the coordinates if necessary) in a new coordinate system $(x',x^{d})$, $x_{0}$ becomes the origin,
$$
\cD_{R_{0}}(0)=\{x\in B_{R_{0}}: x^{d}>\varphi(x')\},
\quad\varphi(0')=0,\quad\nabla_{x'}\varphi(0')=0,
$$
and $\nabla_{x'}\varphi$ has a modulus of continuity $\omega_0$, which is increasing, concave, independent of $x_0$, and satisfies the Dini condition.
\end{definition}

\subsection{Some auxiliary estimates}\label{subsection domain}
Under the same setting as in Section \ref{prob formulation}, we have the following result.
\begin{lemma}\label{volume}
There exists a constant $N$, depending on $d, l$, the $C^{1,\text{Dini}}$ characteristics of $h_{j}(t,\cdot)$ for fixed $t\in(-1,0)$, and $C^{\gamma_0}$-norms of $h_{j}(\cdot,x')$ for fixed $x'\in B'_{1}(0')$, $1\leq j\leq l+1$, such that
$$|(\cQ_{j}\Delta\Omega_{j})\cap Q_{r}^{-}(z_{0})|\leq N\omega_1(r)r^{d+2}+Nr^{d+1+2\gamma_{0}}\ \ \text{when}\ 0<r<r_0:=\frac 2 3\int_0^{R_0/2}\omega_0'(s)s\,ds,$$
where $\cQ_{j}\Delta\Omega_{j}=(\cQ_{j}\setminus\Omega_{j})\cup(\Omega_{j}\setminus \cQ_{j})$, $\omega_0'$ denotes the left derivative of $\omega_0$, and $\omega_1=\omega_1(r)$ is a Dini function derived from $\omega_0$ in Definition \ref{def Dini}.
\end{lemma}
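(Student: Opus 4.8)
The plan is to estimate the measure of the symmetric difference $\cQ_j \Delta \Omega_j$ slice by slice in time, using the fact that both $\cQ_j$ and $\Omega_j$ are ``slab'' regions cut out by the graphs $x^d = h_{j-1}, h_j$ and $x^d = h_{j-1}(t_0,x_0'), h_j(t_0,x_0')$ respectively. Fix $r\in(0,r_0)$ and $z_0=(t_0,x_0)$; after the reduction in Section \ref{prob formulation} we may assume $\nabla_{x'}h_{j_0}(t_0,x_0')=0'$ and that $\varphi:=h_{j_0}(t_0,\cdot)-h_{j_0}(t_0,x_0')$ (suitably recentered) plays the role of the $C^{1,\mathrm{Dini}}$ graph function with $\varphi(x_0')=0$, $\nabla_{x'}\varphi(x_0')=0'$, and modulus $\omega_0$. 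First I would write
\[
|(\cQ_j\Delta\Omega_j)\cap Q_r^-(z_0)| = \int_{t_0-r^2}^{t_0}\Big|\big((\cQ_j)_t \,\Delta\, (\Omega_j)_t\big)\cap B_r(x_0)\Big|\,dt,
\]
where $(\cdot)_t$ denotes the time slice, and then bound the $d$-dimensional slice measure by splitting the discrepancy between $h_{j-1}(t,\cdot), h_j(t,\cdot)$ and their frozen counterparts into a \emph{spatial} part (comparing $h_j(t,x')$ to $h_j(t,x_0')$ for $x'\in B_r'(x_0')$) and a \emph{temporal} part (comparing $h_j(t,x_0')$ to $h_j(t_0,x_0')$).

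For the spatial part, the key estimate is the pointwise bound $|h_j(t,x') - h_j(t,x_0') - \nabla_{x'}h_j(t,x_0')\cdot(x'-x_0')| \le |x'-x_0'|\,\omega_0(|x'-x_0'|)$, which is the standard consequence of the mean value theorem together with Dini continuity of $\nabla_{x'}h_j$; since the normalization gives $\nabla_{x'}h_{j_0}(t_0,x_0')=0'$, one also controls $|\nabla_{x'}h_j(t,x_0')|$ by $\omega_0(Nr)$ plus a $C^{\gamma_0}$-in-time correction of order $r^{2\gamma_0}$ times a spatial-gradient Lipschitz/Dini constant — this is where the precise form of $r_0$ enters, ensuring the graphs stay within $B_{R_0}'$ and the moduli are comparable. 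Integrating this over $x'\in B_r'(x_0')$ and recalling $\int_{B_r'}\omega_0(|x'-x_0'|)\,dx' \lesssim r^{d-1}\omega_1(r)$ for the ``averaged'' Dini modulus $\omega_1(r):=\fint_0^r \omega_0(s)\,ds$ (or whatever normalization \eqref{tilde phi}-style object is used in the statement), gives a contribution of order $\omega_1(r)\, r^{d-1}$ per slice, hence $\omega_1(r)\,r^{d+1}$ after integrating in $t$ over an interval of length $r^2$; one then absorbs an extra $r$ if needed to reach $r^{d+2}$, which is harmless since $\omega_1$ is increasing and we only need an upper bound. For the temporal part, $|h_j(t,x_0') - h_j(t_0,x_0')| \le [h_j(\cdot,x_0')]_{C^{\gamma_0}}|t-t_0|^{\gamma_0} \le N r^{2\gamma_0}$ for $t\in(t_0-r^2,t_0)$, so the corresponding slice discrepancy is at most $N r^{2\gamma_0}\cdot r^{d-1}$ (the thickness times the $(d-1)$-measure of $B_r'$), yielding $N r^{d+1+2\gamma_0}$ after integrating in $t$.

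Summing the spatial and temporal contributions and using that any point lies on at most two interfaces (so only finitely many pairs $(h_{j-1},h_j)$ contribute, with the count absorbed into $N$) gives
\[
|(\cQ_j\Delta\Omega_j)\cap Q_r^-(z_0)| \le N\omega_1(r)\,r^{d+2} + N r^{d+1+2\gamma_0},
\]
as claimed. The main technical obstacle I anticipate is bookkeeping the interplay between the spatial and temporal moduli when $j\neq j_0$: for the non-distinguished strips one does not have $\nabla_{x'}h_j(t_0,x_0')=0'$, so the ``linear part'' $\nabla_{x'}h_j(t_0,x_0')\cdot(x'-x_0')$ is genuinely present and of size up to $N r$; however, this linear term describes a tilted hyperplane whose symmetric difference with a horizontal one, intersected with $B_r(x_0)$, still has $d$-measure $O(r)\cdot r^{d-1} = O(r^d)$ per slice — but crucially \emph{this tilt is already accounted for in $\Omega_j$'s definition via the same frozen normal}, so upon careful inspection the genuinely uncontrolled discrepancy is again only the \emph{Dini} remainder $|x'-x_0'|\omega_0(|x'-x_0'|)$ plus the $C^{\gamma_0}$-in-time term. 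Making this cancellation precise — i.e., checking that $\Omega_j$ is defined using exactly the frozen graph values $h_{j-1}(t_0,x_0'), h_j(t_0,x_0')$ and not their tangent planes, so that the error is purely the second-order-type Dini term and the time-Hölder term — is the crux, and the definition of $\Omega_j$ in Section \ref{prob formulation} is set up precisely so that this works. The remaining steps (choice of $r_0$ to keep everything inside $B_{R_0}$, passing from $\omega_0$ to the derived Dini function $\omega_1$, and the finite-overlap argument) are routine.
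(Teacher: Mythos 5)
Your slice-by-slice decomposition into a spatial and a temporal part is the same skeleton as the paper's proof, and your temporal estimate ($|h_j(t,x_0')-h_j(t_0,x_0')|\le Nr^{2\gamma_0}$, contributing $Nr^{d+1+2\gamma_0}$) is fine. The genuine gap is in the spatial part for $j\neq j_0$. The set $\Omega_j$ is a \emph{horizontal} slab bounded by the frozen constants $h_{j-1}(t_0,x_0')$ and $h_j(t_0,x_0')$, not by tangent planes, so your claim that ``this tilt is already accounted for in $\Omega_j$'s definition'' is false: for $j\neq j_0$ the linear term $\nabla_{x'}h_j(t_0,x_0')\cdot(x'-x_0')$ enters the discrepancy between $\cQ_j$ and $\Omega_j$ in full, and a priori it is only $O(r)$, which would yield only $O(r^{d+2})$ per cylinder --- without the Dini factor $\omega_1(r)$ that the lemma asserts and that the later estimate \eqref{est A} crucially needs. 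Nothing in your argument controls $|\nabla_{x'}h_j(t_0,\cdot)|$ near $x_0'$ for $j\neq j_0$; the normalization $\nabla_{x'}h_{j_0}(t_0,x_0')=0'$ concerns only the distinguished interface, and your suggested fix via a ``$C^{\gamma_0}$-in-time correction of the gradient'' is unavailable since $h_j$ is merely $C^{\gamma_0}$ in $t$ (no regularity of $\nabla_{x'}h_j$ in $t$ is assumed).

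The missing idea, which is the heart of the paper's proof (following \cite[Lemma 2.3]{dx}), is that the ordering and non-crossing of the graphs in \eqref{eq10.42}, combined with the flatness of $h_{j_0}(t_0,\cdot)$ at $x_0'$ and the fact that the relevant graph points lie in $Q_r^-(z_0)$, forces \emph{all} the gradients $|\nabla_{x'}h_j(t_0,\cdot)|$ on $B_r'(x_0')$ to be small: if $\mathbf S$ denotes their supremum, moving away from $x_0'$ a distance $R$ and using that the graphs cannot cross yields $\int_0^R(\mathbf S-2\omega_0(2r+s))\,ds\le 3r$, and optimizing in $R$ gives $\mathbf S\le 2\omega_0(2r+R(r))=:\omega_1(r)$ with $R(r)$ defined by $\int_0^{R}\omega_0'(2r+s)s\,ds=3r/2$. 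This is exactly where the specific threshold $r_0=\frac23\int_0^{R_0/2}\omega_0'(s)s\,ds$ and the specific Dini function $\omega_1$ in the statement come from; your guess $\omega_1(r)=\fint_0^r\omega_0(s)\,ds$ and your vague appeal to ``staying inside $B_{R_0}'$'' do not capture this. Once this gradient bound is in hand, the spatial discrepancy is $\le\omega_1(r)r$ in thickness and the stated bound follows as you outline; without it, the proof does not go through.
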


\begin{proof}
Let $(t_0,x',h_j(t_0,x'))\in Q_r^{-}(t_{0},x_0)$ for some $x'\in B'_{1}$. We first prove that $|\nabla_{x'}h_{j}(t_0,x')|$ in $B'_{r}(x_0')$ is bounded by $\omega_1(r):=2\omega_0(2r+R)$, where $R$ is a fixed number only depending on $r$. This is based on the arguments in the proof of \cite[Lemma 2.3]{dx}. Indeed, we denote the supremum of $|\nabla_{x'}h_{j}(t_0,x')|$ in $B'_{r}(x_0')$ by $\mathbf{S}$. Then for fixed $(t_0,y',h_{j}(t_0,y'))\in Q_{r}^{-}(t_0,x_0)$, we obtain from \eqref{eq10.42} and $\nabla_{x'}h_{j_{0}}(t_0,x'_0)=0$ that
$$
|h_{j_{0}}(t_0,y')-h_{j_{0}}(t_0,x_0')|\le r,\quad
|\nabla_{x'}h_{j_{0}}(t_0,y')|\le \omega_{0}(2r),\quad
|\nabla_{x'}h_{j}(t_0,y')|\ge \mathbf{S}-\omega_{0}(2r).
$$
Then in view of \eqref{eq10.42}, we have for any $R\in(0,1/8)$,
\begin{equation}\label{inetegral00}
\int_{0}^{R}(\mathbf{S}-2\omega_0(2r+s))\ ds\leq 3r.
\end{equation}
The maximum of the left-hand side of \eqref{inetegral00} with respect to $R$ is attained when $2\omega_0(2r+R)=\mathbf{S}$. This yields
\begin{equation*}
R\omega_0(2r+R)-\int_{0}^{R}\omega_0(2r+s)\ ds\leq 3r/2.
\end{equation*}
So we have
\begin{equation}\label{int omega}
\int_{0}^{R}\omega'_0(2r+s)s\ ds\leq 3r/2.
\end{equation}
In order to obtain an upper bound for $\mathbf{S}$, we use \eqref{int omega} to fix the number $R=R(r)(>2r)$ such that
\begin{equation*}
\int_{0}^{R}\omega_0'(2r+s)s\ ds=3r/2.
\end{equation*}
We henceforth get
$$\mathbf{S}=2\omega_0(2r+R)=:\omega_1(r),$$
which is a Dini function on $(0,r_0)$. See the proof of \cite[Lemma 2.3]{dx}.

Now, for any $t\in(t_{0}-r^{2},t_{0})$, by using the triangle inequality and $h_{j}(\cdot,x')\in C^{\gamma_{0}}$, we have
\begin{align*}
|h_{j}(t,x')-h_{j}(t_{0},x'_{0})|\leq|h_{j}(t,x')-h_{j}(t_{0},x')|+|h_{j}(t_{0},x')-h_{j}(t_{0},x'_{0})|\leq Nr^{2\gamma_{0}}+\omega_{1}(r)r.
\end{align*}
We thus obtain
\begin{align*}
|(\cQ_{j}\Delta\Omega_{j})\cap Q_{r}^{-}(t_{0},x_{0})|\leq N(r^{2\gamma_{0}}+\omega_{1}(r)r)r^{d+1}\leq Nr^{d+1+2\gamma_{0}}+N\omega_1(r)r^{d+2}.
\end{align*}
The lemma is proved.
\end{proof}

Let $\hat{A}^{(j)}\in\mathcal{A}$ be a constant function in $\cQ_{j}$ which corresponds to the definition of $\omega_{A}(r)$ in \eqref{def omega A}. Similarly, $\hat{B}^{(j)}$ and $\hat{g}^{(j)}$ are defined in $\cQ_{j}$. We define the piecewise constant (matrix-valued) functions
\begin{align*}
\bar{A}(t,x)=
\hat{A}^{(j)},\quad (t,x)\in\Omega_{j}.
\end{align*}
We remark that $\bar{A}(t,x)$ only depends on $x^{d}$. Using $\hat{B}^{(j)}$ and $\hat{g}^{(j)}$, we similarly define piecewise constant functions $\bar{B}$ and $\bar{g}$. From Lemma \ref{volume} and the boundedness of $A$, we have
\begin{align}\label{est A}
\fint_{Q_{r}^{-}(z_{0})}|\hat{A}-\bar{A}|\ dz
&\leq Nr^{-d-2}\sum_{j=1}^{l+1}|(\cQ_{j}\Delta\Omega_{j})\cap Q_{r}^{-}(z_{0})|\nonumber\\
&\leq N\omega_1(r)+Nr^{2\gamma_{0}-1}=:N\hat\omega_{1}(r),
\end{align}
where $\hat\omega_{1}(r):=\omega_1(r)+r^{2\gamma_{0}-1}$ is a Dini function. This is also true for $\hat{B}$ and $\hat{g}$.

We now turn to the $\mathcal{H}_{p}^{1}$-estimate for parabolic equations with variably partially small BMO (bounded mean oscillation) coefficients (see \cite{dk2}): there exists a sufficiently small constant $\gamma_{0}=\gamma_{0}(d,n,p,\nu)\in (0,1/2)$ and a constant $r_{0}\in(0,1)$ such that for any $r\in(0,r_{0})$ and $(t_{0},x_{0})\in Q_{1}^{-}$ with $B_{r}(x_{0})\subset B_{1}$, in a coordinate system depending on $(t_{0},x_{0})$ and $r$, one can find a $\bar{A}=\bar{A}(x^{d})$ satisfying
\begin{equation}\label{BMO}
\fint_{Q_{r}^{-}(t_{0},x_{0})}|A(t,x)-\bar A(x^{d})|\ dx\ dt\leq\gamma_{0}.
\end{equation}
We obtain the following lemma from \cite[Theorem 8.2]{dk2} by a similar localization argument that led to \cite[Lemma 4, Corollary 3]{d}, the interpolation inequality, and iteration arguments.

\begin{lemma}\label{lem loc lq}
Let $0<p<1<q<\infty$. Assume $A$ satisfies \eqref{BMO} with a sufficiently small constant $\gamma_{0}=\gamma_{0}(d,n,p,q,\nu,\Lambda)\in (0,1/2)$ and $u\in \mathcal{H}_{p,\text{loc}}^{1}$ satisfies \eqref{systems} in $Q_{1}^{-}$, where $f,g\in L_{q}(Q_{1}^{-})$. Then
$$\|u\|_{\mathcal{H}_{q}^{1}(Q_{1/2}^{-})}\leq N(\|u\|_{L_{p}(Q_{1}^{-})}+\|g\|_{L_{q}(Q_{1}^{-})}+\|f\|_{L_{q}(Q_{1}^{-})}).$$
In particular, if $q>d+2$, it holds that
$$|u|_{\gamma/2,\gamma;Q_{1/2}^{-}}\leq N(\|u\|_{L_{p}(Q_{1}^{-})}+\|g\|_{L_{q}(Q_{1}^{-})}+\|f\|_{L_{q}(Q_{1}^{-})}),$$
where $\gamma=1-(d+2)/q$ and $N$ depends on $n,d,\nu,\Lambda,p,q$, and $r_{0}$.
\end{lemma}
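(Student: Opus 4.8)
The plan is to first reduce to the case of an $\mathcal H_q^1$-estimate (from which the Hölder bound follows by the standard parabolic Sobolev embedding $\mathcal H_q^1 \hookrightarrow C^{\gamma/2,\gamma}$ with $\gamma = 1-(d+2)/q$ once $q > d+2$), and second to bootstrap the $\mathcal H_p^1$-hypothesis with $p<1$ up to an $\mathcal H_q^1$-conclusion via a finite iteration on a chain of shrinking subcylinders. The core input is \cite[Theorem 8.2]{dk2}, which under the smallness assumption \eqref{BMO} on the partially-BMO coefficients gives, for $1<s<\infty$, the a priori estimate $\|u\|_{\mathcal H_s^1(Q_{1/2}^-)} \le N(\|u\|_{L_s(Q_1^-)} + \|g\|_{L_s(Q_1^-)} + \|f\|_{L_s(Q_1^-)})$ for $\mathcal H_s^1$-solutions, after the localization argument of \cite[Lemma 4, Corollary 3]{d} removes the lower-order terms $B, \hat B, C$ by absorbing them (using their boundedness by $\Lambda$ and the interpolation inequality, at the cost of passing to a slightly smaller cylinder).

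The first and main difficulty is that $u$ is only assumed to lie in $\mathcal H_{p,\mathrm{loc}}^1$ with $p < 1$, so the right-hand side $\|u\|_{L_s(Q_1^-)}$ of the $\mathcal H_s^1$-estimate is a priori infinite and Theorem 8.2 of \cite{dk2} cannot be applied directly: one must first upgrade the integrability of $u$. First I would fix a sufficiently long chain of nested parabolic cylinders $Q_{r_0}^- \supset Q_{r_1}^- \supset \cdots \supset Q_{r_m}^- = Q_{1/2}^-$ (say $r_k$ decreasing geometrically from $1$ toward $1/2$), together with an increasing sequence of exponents $p = q_0 < q_1 < \cdots < q_m = q$, chosen so that consecutive exponents satisfy a Sobolev-type gain $q_{k+1} \le q_k(d+2)/(d+2-2q_k)$ (for those $q_k < (d+2)/2$) and $q_{k+1}$ arbitrary once $q_k \ge (d+2)/2$; a finite number $m = m(d,p,q)$ of steps suffices. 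At each step one applies the localized version of \cite[Theorem 8.2]{dk2} on the pair $(Q_{r_k}^-, Q_{r_{k+1}}^-)$ to obtain $\|u\|_{\mathcal H_{q_{k+1}}^1(Q_{r_{k+1}}^-)} \le N(\|u\|_{L_{q_k}(Q_{r_k}^-)} + \|g\|_{L_{q_{k+1}}(Q_{r_k}^-)} + \|f\|_{L_{q_{k+1}}(Q_{r_k}^-)})$ — here one uses that $g,f \in L_q \subset L_{q_{k+1}}$ on the bounded cylinder — and then the parabolic Sobolev embedding $\mathcal H_{q_k}^1(Q_{r_k}^-) \hookrightarrow L_{q_{k+1}}(Q_{r_k}^-)$ to feed the output of step $k$ into the input of step $k+1$. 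Caveat: to invoke Theorem 8.2 at step $k$ we need $u \in \mathcal H_{q_k}^1$ on $Q_{r_k}^-$, which is exactly the conclusion of step $k-1$ (and at $k=0$ is the hypothesis $u\in\mathcal H^1_{p,\mathrm{loc}}$); for the base case $q_0 = p < 1$ one needs a version of the solvability/a priori estimate valid for $p<1$, which is supplied by the same low-exponent $\mathcal H_p^1$-theory underlying \cite[Theorem 8.2]{dk2} and \cite[Corollary 3]{d} — this is the one place where working with $p\in(0,1)$ rather than $p\ge 1$ requires care, and it is the point I expect to be the main obstacle to write cleanly.

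Concatenating the $m$ estimates and absorbing all the intermediate constants into a single universal $N = N(n,d,\nu,\Lambda,p,q,r_0)$ yields $\|u\|_{\mathcal H_q^1(Q_{1/2}^-)} \le N(\|u\|_{L_p(Q_1^-)} + \|g\|_{L_q(Q_1^-)} + \|f\|_{L_q(Q_1^-)})$, which is the first assertion. For the second assertion, assume $q > d+2$; then $\gamma := 1 - (d+2)/q \in (0,1)$ and the anisotropic parabolic embedding $\mathcal H_q^1(Q_{1/2}^-) \hookrightarrow C^{\gamma/2,\gamma}(Q_{1/2}^-)$ (valid for such $q$, see the standard references cited in the paper) gives $|u|_{\gamma/2,\gamma;Q_{1/2}^-} \le N\|u\|_{\mathcal H_q^1(Q_{1/2}^-)}$; combining with the first part finishes the proof. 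A final remark: the smallness threshold $\gamma_0$ must be chosen small enough to make \cite[Theorem 8.2]{dk2} applicable simultaneously for every exponent $q_1,\dots,q_m$ appearing in the chain, hence the stated dependence $\gamma_0 = \gamma_0(d,n,p,q,\nu,\Lambda)$.
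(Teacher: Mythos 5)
Your overall route is the one the paper intends: localize \cite[Theorem 8.2]{dk2} to pairs of nested cylinders, raise the integrability of $u$ by a finite bootstrap, and finish with the parabolic embedding $\mathcal{H}^{1}_{q}\hookrightarrow C^{\gamma/2,\gamma}$ for $q>d+2$. The genuine gap is exactly at the point you flag but do not resolve: the base case $q_{0}=p<1$. There is no ``low-exponent $\mathcal{H}^{1}_{p}$-theory'' to invoke --- \cite[Theorem 8.2]{dk2} (and the divergence-form $L_{p}$ framework in general) is confined to $p\in(1,\infty)$, and $\mathcal{H}^{1}_{p}$ is not even defined for $p<1$; the exponent $p\in(0,1)$ is never an exponent at which the system is solved or estimated, it only appears in the final right-hand side (in every application of the lemma $u$ has a priori local regularity at some exponent $>1$, e.g.\ $u\in\mathcal{H}^{1}_{p}$ with $p>1$ or $u\in C^{0,1}_{\mathrm{loc}}$). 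The missing mechanism is the ``interpolation inequality and iteration'' the paper lists: run your bootstrap only with exponents $>1$ on pairs $Q_{r}^{-}\subset Q_{R}^{-}$, $1/2\le r<R\le 3/4$, to get
\[
\|u\|_{\mathcal{H}^{1}_{q}(Q_{r}^{-})}\le N(R-r)^{-\theta}\bigl(\|u\|_{L_{q_{1}}(Q_{R}^{-})}+\|g\|_{L_{q}(Q_{1}^{-})}+\|f\|_{L_{q}(Q_{1}^{-})}\bigr)
\]
for some $q_{1}>1$, then lower the exponent of the $u$-term via
\[
\|u\|_{L_{q_{1}}(Q_{R}^{-})}\le \|u\|_{L_{\infty}(Q_{R}^{-})}^{1-p/q_{1}}\|u\|_{L_{p}(Q_{R}^{-})}^{p/q_{1}}\le \varepsilon\|u\|_{L_{\infty}(Q_{R}^{-})}+N\varepsilon^{-\theta'}\|u\|_{L_{p}(Q_{R}^{-})}
\]
(or the analogous interpolation against the Sobolev norm when $q\le d+2$), and absorb the $\varepsilon$-term by the standard iteration lemma over an increasing sequence of radii, cf.\ \cite[Corollary 3]{d}; the absorption is legitimate because the quantity being absorbed is already known to be finite from the exponent-$>1$ estimates. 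Without this step your argument does not produce the $L_{p}$-quasinorm of $u$ on the right-hand side, which is the whole point of the lemma.

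Two smaller remarks. First, your Sobolev gain $q_{k+1}\le q_{k}(d+2)/(d+2-2q_{k})$ corresponds to a gain of two derivatives, i.e.\ to $W^{1,2}_{q_{k}}$; for $\mathcal{H}^{1}_{q_{k}}$ the embedding gives only $1/q_{k+1}\ge 1/q_{k}-1/(d+2)$ --- harmless, since a finite chain still reaches $q$, but the recursion as written is not the one available. Second, in the localization step the cut-off produces terms such as $A\,Du\,D\eta$ and $\hat{B}^{\alpha}D_{\alpha}u$ which sit in $L_{q_{k}}$ only; they must be placed in the divergence-free slot of the $\mathbb{H}^{-1}_{q_{k+1}}$ right-hand side (using $L_{q_{k}}\subset \mathbb{H}^{-1}_{q_{k+1}}$ by Sobolev duality) rather than in the $\|f\|_{L_{q_{k+1}}}$ slot as your sketch suggests; this is routine but should be said.
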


In the proofs below, we will also use the $\mathcal{H}_{p}^{1}$-solvability for parabolic systems with leading coefficients which satisfy \eqref{BMO} in $Q_{1}^{-}$. For this we choose a cut-off function $\eta\in C_{0}^{\infty}(B_{1})$ with
$$0\leq\eta\leq1,\quad \eta\equiv1~\mbox{in}~B_{3/4},\quad|\nabla\eta|\leq 8.$$
Let $\tilde{\mathcal{P}\ }$ be the parabolic operator defined by
$$
\tilde{\mathcal{P}\ }u:=-u_{t}+D_{\alpha}(\tilde{A}^{\alpha\beta}D_{\beta}u),
$$
where $\tilde{A}^{\alpha\beta}=\eta A^{\alpha\beta}(t,x)+\nu(1-\eta)\delta_{\alpha\beta}\delta_{ij}$, $\delta_{\alpha\beta}$ and $\delta_{ij}$ are the Kronecker delta symbols. Then for sufficiently small $\gamma$, the coefficients $\tilde{A}^{\alpha\beta}(t,x)$ and the boundary $\partial B_{1}$ satisfy the Assumption 8.1 ($\gamma$) in \cite{dk2}. By \cite[Theorem 8.2]{dk2} (or Lemma \ref{sol weight} below), we have
\begin{lemma}\label{solvability}
For any $p\in(1,\infty)$, $g, f\in L_{p}(Q_{1}^{-})$, the following hold.
\begin{enumerate}
\item
For any $u\in \mathcal{\mathring{H}}_{p}^{1}(Q_{1}^{-})$ satisfying
\begin{align}\label{approxi sol}
\tilde{\mathcal{P}\ }u=\Div g+f&\quad\mbox{in}~Q_{1}^{-},
\end{align}
we have
\begin{align}\label{est H u}
\|u\|_{\mathcal{H}_{p}^{1}(Q_{1}^{-})}\leq N\big(\|g\|_{L_{p}(Q_{1}^{-})}+\|f\|_{L_{p}(Q_{1}^{-})}\big),
\end{align}
where $N$ depends on $d,n,p,\nu,\Lambda$, and $r_{0}$.

\item
For any $g, f\in L_{p}(Q_{1}^{-})$, there exists a unique solution $u\in \mathcal{\mathring{H}}_{p}^{1}(Q_{1}^{-})$ of \eqref{approxi sol} with the initial data $u(-1,\cdot)\equiv0$ in $B_{1}$. Furthermore, $u$ satisfies \eqref{est H u}.
\end{enumerate}
\end{lemma}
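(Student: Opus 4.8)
The plan is to obtain both parts of the lemma as a direct consequence of the $\mathcal{H}_p^1$-solvability theory for parabolic systems with variably partially small BMO coefficients, namely \cite[Theorem 8.2]{dk2} (equivalently, Lemma \ref{sol weight} below with the trivial weight $w\equiv1$). The only thing to be checked by hand is that the modified operator $\tilde{\mathcal{P}\ }$ and the cylinder $Q_1^-=(-1,0)\times B_1$ meet the structural hypotheses of that theorem: strong parabolicity with ellipticity constant depending only on $\nu$, and the partially small BMO condition (Assumption 8.1($\gamma$) of \cite{dk2}) with $\gamma$ as small as the theorem demands, together with enough regularity of the lateral boundary $\partial B_1$.

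Strong parabolicity is immediate, since for each $(t,x)$ the matrix $\tilde{A}(t,x)=\eta(x)A(t,x)+\nu(1-\eta(x))I$ is a pointwise convex combination of $A(t,x)$, which satisfies $\nu|\xi|^2\le A_{ij}^{\alpha\beta}\xi_\alpha^i\xi_\beta^j$ and $|A^{\alpha\beta}|\le\nu^{-1}$, and of $\nu I$; hence $\tilde{A}$ obeys the same two bounds. For the BMO condition I would fix $z_0\in Q_1^-$ and a small $r$ with $B_r(x_0)\subset B_1$ and pass to the coordinate system attached to $(z_0,r)$ in which \eqref{BMO} holds for some $\bar{A}=\bar{A}(x^d)$, which we may take to satisfy $|\bar{A}|\le N$ (note that $\eta$ stays Lipschitz with the same constant in any rotated frame, while $\nu I$ is rotation invariant). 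On $B_{3/4}$ we have $\eta\equiv1$, so $\tilde{A}=A$ and \eqref{BMO} applies with the small constant $\gamma_0$; near $\partial B_1$ we have $\eta\equiv0$, so $\tilde{A}=\nu I$ is constant and its oscillation vanishes; in the transition region, setting $\bar{\tilde{A}}(x^d):=\eta(x_0)\bar{A}(x^d)+\nu(1-\eta(x_0))I$ and using $|\nabla\eta|\le8$, one has for $z\in Q_r^-(z_0)$
$$|\tilde{A}(z)-\bar{\tilde{A}}(x^d)|\le|A(z)-\bar{A}(x^d)|+N|\eta(x)-\eta(x_0)|\le|A(z)-\bar{A}(x^d)|+Nr,$$
so the average of $|\tilde{A}-\bar{\tilde{A}}|$ over $Q_r^-(z_0)$ is at most $\gamma_0+Nr$. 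Choosing $\gamma_0$ and then the threshold $r_0$ small, depending only on $d,n,p,\nu,\Lambda$, this is $\le\gamma$ for all $r\le r_0$; since $\partial B_1$ is smooth, the boundary part of Assumption 8.1 is trivially satisfied. Thus $\tilde{\mathcal{P}\ }$ is covered by \cite[Theorem 8.2]{dk2}.

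It then remains to read off the conclusions. Part (2) is the solvability assertion of \cite[Theorem 8.2]{dk2}: for every $g,f\in L_p(Q_1^-)$ there is a unique $u\in\mathcal{\mathring{H}}_p^1(Q_1^-)$ with $u(-1,\cdot)\equiv0$ solving \eqref{approxi sol}, and it satisfies \eqref{est H u}. Part (1) is the a priori estimate of the same theorem applied to an arbitrary $u\in\mathcal{\mathring{H}}_p^1(Q_1^-)$ satisfying \eqref{approxi sol} (alternatively, such a $u$ must agree with the solution produced in part (2), because the difference of two solutions solves the homogeneous problem and is forced to vanish by \eqref{est H u}). I do not anticipate any real difficulty here: the only point requiring a little care is the BMO verification in the annulus $B_1\setminus B_{3/4}$, where $r_0$ must be taken small enough that the error $Nr$ coming from the Lipschitz bound $|\nabla\eta|\le8$ is absorbed into the smallness constant $\gamma$ needed in \cite{dk2} — and the constant $8$ in the choice of $\eta$ is fixed precisely so that this works.
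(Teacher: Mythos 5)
Your proposal is correct and follows essentially the same route as the paper: the paper simply notes that for small $\gamma$ the coefficients $\tilde{A}^{\alpha\beta}$ and the boundary $\partial B_{1}$ satisfy Assumption 8.1($\gamma$) of \cite{dk2} and then invokes \cite[Theorem 8.2]{dk2} (or Lemma \ref{sol weight} with $w\equiv1$), exactly as you do. The only difference is that you spell out the verification of the partially small BMO condition in the three regions (where $\eta\equiv1$, where $\eta\equiv0$, and the transition annulus, absorbing the $Nr$ error by shrinking $r_0$), which the paper leaves implicit.
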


In addition to the above estimates, we will also need to consider systems with coefficients depending only on $x^{d}$. Denote
$$\mathcal{P}_{0}u:=-u_{t}+D_{\alpha}(\bar{A}^{\alpha\beta}(x^{d})D_{\beta}u),$$
and
$$\bar{U}:=\bar{A}^{d\beta}(x^{d})D_{\beta}u.$$

\begin{lemma}\label{lemma xn}
Let $p\in(0,\infty)$. Assume $u\in C_{\text{loc}}^{0,1}$ satisfies $\mathcal{P}_{0}u=0$ in $Q_{1}^{-}$. Then there exists a constant $N=N(n,d,p,\nu,\Lambda)$ such that
\begin{align}\label{DDu}
[u]_{C^{1/2,1}(Q_{1/2}^{-})}\leq N\|u\|_{L_{p}(Q_{1}^{-})}, \quad[D_{x'}u]_{C^{1/2,1}(Q_{1/2}^{-})}\leq N\|D_{x'}u\|_{L_{p}(Q_{1}^{-})},
\end{align}
and
\begin{align}\label{DU}
[\bar{U}]_{C^{1/2,1}}(Q_{1/2}^{-})\leq N\|Du\|_{L_{p}(Q_{1}^{-})}.
\end{align}
\end{lemma}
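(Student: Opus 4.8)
The statement concerns a parabolic system $\mathcal{P}_0 u = -u_t + D_\alpha(\bar A^{\alpha\beta}(x^d) D_\beta u) = 0$ whose coefficients depend only on $x^d$. The plan is to exploit this one-dimensional dependence to produce, by differentiation and by forming a suitable combination, new solutions of the \emph{same} type of equation, and then to upgrade from an integral ($L_p$) bound to a pointwise $C^{1/2,1}$ bound via local parabolic estimates. I would first treat the tangential derivatives $D_{x'}u$. Since $\bar A^{\alpha\beta}$ does not depend on $x'$ or $t$, differentiating the equation in any tangential direction $x^k$ ($1\le k\le d-1$) shows that $v:=D_{x^k}u$ again satisfies $\mathcal{P}_0 v = 0$ in $Q_1^-$. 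Likewise, the difference quotients in $t$ of $u$ solve the same equation. Then the quantity $\bar U = \bar A^{d\beta}(x^d) D_\beta u$ is the natural ``conormal'' object: one checks that $D_d \bar U = \partial_t u - D_{x'}\!\cdot(\bar A^{d'\beta}D_\beta u)$ (the $x^d$-derivative of the $d$-th flux equals minus the divergence of the other fluxes plus $u_t$, from the equation itself), and combined with $D_{x'} \bar U = \bar A^{d\beta} D_{x'}D_\beta u$ — which involves only tangential derivatives of $u$ and hence solves $\mathcal P_0$-type equations — one finds that $\bar U$ is itself Hölder regular once $Du$ is controlled. This is the standard mechanism behind interior regularity for equations with coefficients depending on one variable.

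\textbf{Key steps, in order.} (1) Establish the interior $L_\infty$ and then $C^{1/2,1}$ bound for a \emph{bounded weak solution} $w$ of $\mathcal P_0 w = 0$ in $Q_1^-$ in terms of $\|w\|_{L_p(Q_1^-)}$: first a Moser-type / local boundedness estimate $\|w\|_{L_\infty(Q_{3/4}^-)} \le N\|w\|_{L_p(Q_1^-)}$ for any $p>0$ (obtained by the usual reverse-Hölder self-improvement once one has it for $p=2$, which follows from energy estimates and Sobolev embedding), and then the $C^{1/2,1}$ interior estimate for $\mathcal P_0$-solutions. The $C^{1/2,1}$ (i.e.\ Lipschitz in $x$, $C^{1/2}$ in $t$) interior estimate for coefficients depending only on $x^d$ is itself proved by iterating: freezing $x^d$ is not available, but the one-variable dependence lets one differentiate tangentially and in time indefinitely and run a bootstrap, or equivalently appeal to the known structure theory (e.g.\ as in \cite{dek2} or the parabolic analogue of the elliptic one-variable results). (2) Apply step (1) to $w = u$, $w = D_{x^k}u$, and $w = $ the $t$-difference quotients of $u$, using that each solves $\mathcal P_0 w = 0$; this gives the first two inequalities in \eqref{DDu}. (3) For $\bar U$: use the equation to write $D_d \bar U$ in terms of $\partial_t u$ and tangential derivatives of the fluxes, note $D_{x'}\bar U$ and the relevant combinations are controlled by $\|D_{x'}(Du)\|$ which in turn is controlled by applying step (1) to tangential derivatives; conclude $[\bar U]_{C^{1/2,1}(Q_{1/2}^-)} \le N\|Du\|_{L_p(Q_1^-)}$, which is \eqref{DU}. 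A scaling/covering argument passes from $Q_{3/4}^-$ to $Q_{1/2}^-$ with the stated constants.

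\textbf{Main obstacle.} The genuinely delicate point is step (1): proving the interior $C^{1/2,1}$ estimate for $\mathcal P_0$-solutions when $\bar A = \bar A(x^d)$ is merely bounded and measurable in $x^d$ (only piecewise constant in our application, but the lemma is stated for general such $\bar A$). One cannot freeze $x^d$ and treat the equation perturbatively in the usual way, and De Giorgi–Nash–Moser alone gives only Hölder, not Lipschitz, regularity in $x$. The resolution is precisely the observation that $D_{x'}u$ and $u_t$ solve the same equation (so they are Hölder continuous), while the conormal quantity $\bar U$ absorbs the bad $x^d$-direction: $u$ is then seen to be $C^{1,\mathrm{Dini}}$-type in $x^d$ because $D_d u = (\bar A^{dd})^{-1}(\bar U - \bar A^{d k}D_k u)$ with $\bar A^{dd}$ invertible by ellipticity and the right-hand side Hölder. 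Making this rigorous with the $L_p$-on-the-right-hand-side formulation for all $p \in (0,\infty)$ — including $p<1$, which forces one to use the local-boundedness self-improvement rather than $L_2$ energy methods directly — is the technical heart; once it is in place, the extension to $D_{x'}u$ and $\bar U$ by the differentiation trick is routine, and the passage between concentric cylinders is a standard covering argument.
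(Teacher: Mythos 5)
Your proposal is correct and follows essentially the same route as the paper's proof: local boundedness of solutions for arbitrarily small $p$ via interpolation/iteration, the observation that tangential derivatives and $u_t$ (equivalently, time difference quotients) solve the same system $\mathcal{P}_0 w=0$, the identities expressing $D_d\bar U$, $D_{x'}\bar U$, and $\bar U_t$ through $u_t$ and $DD_{x'}u$, and the ellipticity $\bar A^{dd}\ge \nu$ to recover $D_d u$ from $\bar U$ and $D_{x'}u$. The only cosmetic difference is the engine for the local estimates: the paper invokes the $\mathcal{H}^1_q$ estimate of Lemma \ref{lem loc lq} with $q>d+2$ plus Sobolev embedding and the local energy bound for $u_t$ from \cite{dk3}, rather than a Moser-type self-improvement, but this does not change the structure of the argument.
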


\begin{proof}
By using Lemma \ref{lem loc lq}, the Sobolev embedding theorem, the interpolation inequality, and iteration arguments, we have
\begin{align}\label{u infinite}
\|u\|_{L_{\infty}(Q_{4/5}^{-})}\leq N\|u\|_{L_{p}(Q_{1}^{-})},\quad p>0.
\end{align}
For fixed $t\in(-1,0)$, we define the finite difference quotient
$$
\delta_{h,k}f(t,x):=\frac{f(t,x+he_{k})-f(t,x)}{h},
$$
where $k=1,\dots,d-1$, $0<|h|<1/12$. Since $\bar{A}^{\alpha\beta}(x^{d})$ are independent of $x'$, we have $\mathcal{P}_{0}(\delta_{h,k}u)=0$ in $Q_{1}^{-}$. Then in view of Lemma \ref{lem loc lq} and \eqref{u infinite}, we obtain
\begin{align*}
\|\delta_{h,k}u\|_{\mathcal{H}_{q}^{1}(Q_{1/2}^{-})}\leq N\|\delta_{h,k}u\|_{L_{2}(Q_{2/3}^{-})}\leq N\|D_{x'}u\|_{L_{2}(Q_{3/4}^{-})},\quad\forall~ q>1.
\end{align*}
Letting $h\rightarrow0$, we obtain
\begin{align}\label{Dx'u WW}
\|D_{x'}u\|_{\mathcal{H}_{q}^{1}(Q_{1/2}^{-})}\leq N\|D_{x'}u\|_{L_{2}(Q_{3/4}^{-})},\quad \forall\ q>1.
\end{align}
On the other hand, from \cite[Lemma 3.3]{dk3}, we have
\begin{equation}\label{es ut}
\|u_{t}\|_{L_{2}(Q_{2/3}^{-})}\leq N\|Du\|_{L_{2}(Q_{3/4}^{-})}.
\end{equation}
Observing that $\mathcal{P}_{0}(u_{t})=0$ in $Q_{1}^{-}$, and using Lemma \ref{lem loc lq},  \eqref{u infinite}, and \eqref{es ut}, we get
\begin{equation}\label{ut Hq1}
\|u_{t}\|_{\mathcal{H}_{q}^{1}(Q_{1/2}^{-})}\leq N\|u_{t}\|_{L_{2}(Q_{2/3}^{-})}\leq N\|Du\|_{L_{2}(Q_{3/4}^{-})},\quad\forall~ q>1.
\end{equation}
Hence, by the Sobolev embedding theorem for $q>d+2$, we have
\begin{equation}\label{infty ut}
\|u_{t}\|_{L_{\infty}(Q_{1/2}^{-})}\leq N\|Du\|_{L_{2}(Q_{3/4}^{-})}.
\end{equation}

Now notice that in $Q_{1}^{-}$,
\begin{align}\label{Dx U}
D_{d}\bar{U}=u_{t}-\sum_{\alpha=1}^{d-1}\sum_{\beta=1}^{d}\bar{A}^{\alpha\beta}D_{\alpha\beta}u,
\  D_{x'}\bar{U}=\sum_{\beta=1}^{d}\bar{A}^{d\beta}D_{x'}D_{\beta}u,
\  \bar{U}_t=\sum_{\beta=1}^{d}\bar{A}^{d\beta}D_t D_{\beta}u.
\end{align}
Therefore, it follows from \eqref{Dx'u WW}, \eqref{ut Hq1}, and \eqref{Dx U} that
$$\|D_{x'}u\|_{\mathcal{H}_{q}^{1}(Q_{1/2}^{-})}+\|\bar{U}\|_{\mathcal{H}_{q}^{1}(Q_{1/2}^{-})}\leq N\|Du\|_{L_{2}(Q_{3/4}^{-})}.$$
Then by the Sobolev embedding theorem for $q>d+2$, $\bar{A}^{dd}(x^{d})\geq\nu$, and the definition of $\bar{U}$, we have
\begin{equation}\label{es Du infty}
\|Du\|_{L_{\infty}(Q_{1/2}^{-})}\leq N\|Du\|_{L_{2}(Q_{3/4}^{-})}.
\end{equation}
By using \eqref{infty ut}, \eqref{es Du infty}, Lemma \ref{lem loc lq},  and \eqref{u infinite}, we have
\begin{equation}\label{est ut Du}
\|u_{t}\|_{L_{\infty}(Q_{1/2}^{-})}+\|Du\|_{L_{\infty}(Q_{1/2}^{-})}\leq N\|Du\|_{L_{2}(Q_{3/4}^{-})}\leq C\|u\|_{L_{2}(Q_{4/5}^{-})}\leq C\|u\|_{L_{p}(Q_{1}^{-})},\quad\forall~p>0.
\end{equation}
Recalling that the coefficients of $\mathcal{P}_{0}$ are independent of $x'$, we henceforth have
$$
\mathcal{P}_{0}(D_{x'}u)=0\quad \quad\text{in}\  Q_{1}^{-}.
$$
Replacing $u$ with $D_{x'}u$ in \eqref{est ut Du}, we get
\begin{equation}\label{est Dut}
\|D_{x'}u_{t}\|_{L_{\infty}(Q_{1/2}^{-})}+\|DD_{x'}u\|_{L_{\infty}(Q_{1/2}^{-})}\leq N\|D_{x'}u\|_{L_{p}(Q_{1}^{-})},\quad \forall~p>0.
\end{equation}
We thus obtain \eqref{DDu} by using \eqref{est ut Du} and \eqref{est Dut}.

Next we prove \eqref{DU}. By using \eqref{es Du infty}, the interpolation inequality,  and iteration arguments, we obtain
\begin{equation}\label{est Du p}
\|Du\|_{L_{\infty}(Q_{1/2}^{-})}\leq N\|Du\|_{L_{p}(Q_{1}^{-})},\quad \forall~p>0.
\end{equation}
Now by using the fact that the coefficients of $\mathcal{P}_{0}$ are independent of $t$, we have
$$
\mathcal{P}_{0}(u_{t})=0\quad\text{in}\  Q_{1}^{-}.
$$
Replacing $u$ with $u_{t}$ in \eqref{est ut Du} with a slightly smaller domain, using \eqref{infty ut} and \eqref{est Du p}, we have
\begin{equation}\label{es Dut infty}
\|Du_{t}\|_{L_{\infty}(Q_{1/2}^{-})}\leq N\|Du\|_{L_{p}(Q_{1}^{-})},\quad \forall~p>0.
\end{equation}
Therefore, \eqref{DU} is a consequence of \eqref{DDu}, \eqref{Dx U}, and \eqref{es Dut infty}. The lemma is proved.
\end{proof}

We remark that the same proofs of Lemmas \ref{lem loc lq}--\ref{lemma xn} give similar results for the adjoint operator of $\mathcal{P}$. We close this section by giving the following two lemmas.

\begin{lemma}{\cite[Lemma 2.7]{dk}}\label{lemma omiga}
Let $\omega$ be a nonnegative bounded function. Suppose there is $c_{1},c_{2}>0$ and $0<\kappa<1$ such that for $\kappa t\leq s\leq t$ and $0<t<r$,
\begin{align}\label{equivalence}
c_{1}\omega(t)\leq \omega(s)\leq c_{2}\omega(t).
\end{align}
Then, we have
$$\sum_{i=0}^{\infty}\omega(\kappa^{i}r)\leq N\int_{0}^{r}\frac{\omega(t)}{t}\ dt,$$
where $N=N(\kappa,c_{1},c_{2})$.
\end{lemma}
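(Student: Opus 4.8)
The plan is to exploit the doubling-type comparison \eqref{equivalence} to estimate the discrete sum $\sum_{i}\omega(\kappa^{i}r)$ against the integral $\int_{0}^{r}\omega(t)/t\,dt$ termwise, using the annuli $(\kappa^{i+1}r,\kappa^{i}r]$ as the natural partition of $(0,r]$.

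First I would fix an integer $i\ge 0$ and set $t=\kappa^{i}r$. For every $s\in[\kappa^{i+1}r,\kappa^{i}r]=[\kappa t,t]$, the left inequality in \eqref{equivalence} gives $\omega(s)\ge c_{1}\omega(\kappa^{i}r)$. Dividing by $s$ and integrating over this interval yields
\[
\int_{\kappa^{i+1}r}^{\kappa^{i}r}\frac{\omega(s)}{s}\,ds\ \ge\ c_{1}\,\omega(\kappa^{i}r)\int_{\kappa^{i+1}r}^{\kappa^{i}r}\frac{ds}{s}\ =\ c_{1}\ln(1/\kappa)\,\omega(\kappa^{i}r).
\]
Next I would sum over $i\ge 0$. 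Since $0<\kappa<1$, the half-open intervals $(\kappa^{i+1}r,\kappa^{i}r]$ are pairwise disjoint with union $(0,r]$, so summing the left-hand sides recovers exactly $\int_{0}^{r}\omega(s)/s\,ds$. This gives
\[
\int_{0}^{r}\frac{\omega(s)}{s}\,ds\ \ge\ c_{1}\ln(1/\kappa)\sum_{i=0}^{\infty}\omega(\kappa^{i}r),
\]
i.e.\ the claim with $N=\bigl(c_{1}\ln(1/\kappa)\bigr)^{-1}$. (Note that only the lower bound in \eqref{equivalence} is used, so in fact only $c_{1}$ and $\kappa$ enter $N$; also, if $\int_{0}^{r}\omega/s\,ds=\infty$ the inequality is trivial, so measurability of $\omega$ is all that is needed for the argument to make sense.)

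There is essentially no serious obstacle here; the only point meriting a word of care is that \eqref{equivalence} is stated for $0<t<r$, whereas the term $i=0$ uses $t=r$. This is resolved by running the argument above at radius $\rho\in(\kappa r,r)$ — for which all $\kappa^{i}\rho<r$ — to obtain $\sum_{i}\omega(\kappa^{i}\rho)\le N\int_{0}^{\rho}\omega/s\,ds\le N\int_{0}^{r}\omega/s\,ds$, and then invoking \eqref{equivalence} once more (legitimate since $\rho/r\ge\kappa$) to compare $\omega(\kappa^{i}r)\le c_{1}^{-1}\omega(\kappa^{i}\rho)$ before letting $\rho\uparrow r$; this costs only a harmless extra factor $c_{1}^{-1}$ in $N$. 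If the convention in \cite{dk} is that \eqref{equivalence} holds on $0<t\le r$, this step is unnecessary.
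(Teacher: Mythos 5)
Your dyadic-annulus argument is exactly the standard proof of this lemma (which the paper itself does not prove but quotes from \cite[Lemma 2.7]{dk}): integrating $\omega(s)/s$ over $(\kappa^{i+1}r,\kappa^{i}r]$ and using the lower bound in \eqref{equivalence} gives the claim with $N=(c_{1}\ln(1/\kappa))^{-1}$, so the proposal is correct and essentially identical in approach. One small caveat on your endpoint patch: comparing $\omega(\kappa^{i}r)\le c_{1}^{-1}\omega(\kappa^{i}\rho)$ at $i=0$ still invokes \eqref{equivalence} with $t=r$, so the hypothesis really must be read as holding for $0<t\le r$ (as it is used in the paper); with the strictly smaller range the value $\omega(r)$ is simply unconstrained and the $i=0$ term cannot be controlled, so no workaround would recover it.
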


\begin{lemma}\label{lemma weak}
Let $\cQ=(-T,0)\times\cD$ be a bounded domain in $\mathbb R^{d+1}$. For fixed $t$ and all $x\in\overline{\cD}$,
\begin{equation*}
|\cD\cap B_{r}(x)|\geq A_{0}r^{d}\quad\mbox{for}~r\in(0,\mbox{diam}\ \cD],
\end{equation*}
where $A_{0}>0$ is a constant. Let $p\in (1,\infty)$ and $S$ be a bounded linear operator on $L_{p}(\cQ)$. Suppose that for any $\bar{z}\in \cQ$ and $0<r<\mu~ \mbox{diam}\ \cD$,  we have
$$
\int_{\cQ\setminus Q_{cr}(\bar{z})}|Sb|\leq C_{0}\int_{\cQ_{r}^{-}(\bar{z})}|b|
$$
whenever $b\in L_{p}(\cQ)$ is supported in $\cQ_{r}^{-}(\bar{z})$, $\int_{\cQ}b=0$, and $c>1$, $C_{0}>0$, $\mu\in(0,1)$ are constants. Then for $g\in L_{p}(\cQ)$ and any $s>0$, we have
$$|\{(t,x)\in \cQ: |Sg(t,x)|>s\}|\leq\frac{N}{s}\int_{\cQ}|g|,$$
where $N=N(d,c,C_{0},\cD,A_{0},\mu,\|S\|_{L_p\rightarrow L_p})$ is a constant.
\end{lemma}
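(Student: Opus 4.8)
The statement is the parabolic incarnation of the classical Calderón--Zygmund weak type-$(1,1)$ theorem for singular integrals, so the plan is to run the textbook decomposition argument with the parabolic cylinders $\cQ_r^-(\bar z)$ in place of cubes. First I would dispose of the trivial range of $s$: since $|\{(t,x)\in\cQ:|Sg(t,x)|>s\}|\le|\cQ|$ for every $s$, the asserted inequality is immediate whenever $s\le N_0\|g\|_{L_1(\cQ)}/|\cQ|$, for any fixed $N_0$. Hence it suffices to treat $s>N_0\|g\|_{L_1(\cQ)}/|\cQ|$, and $N_0$ will be chosen large (depending only on $d$, $\cD$, $A_0$, $\mu$) so that the stopping-time construction below never selects a cylinder of radius $\ge\mu\,\mathrm{diam}\,\cD$.

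Next I would perform a parabolic Calderón--Zygmund decomposition of $g$ at height $s$ relative to $\cQ$: a standard stopping-time construction using backward parabolic cylinders, started from a fixed finite partition of $\cQ$ at scale comparable to $\min\{1,\mu\}\,\mathrm{diam}\,\cD$, produces a countable family $Q_k:=\cQ_{r_k}^-(\bar z_k)$ of bounded overlap with $|g|\le Ns$ a.e.\ on $\cQ\setminus\bigcup_kQ_k$, $\frac{1}{|Q_k|}\int_{Q_k}|g|\le Ns$, $\sum_k|Q_k|\le \frac1s\|g\|_{L_1(\cQ)}$, and $r_k<\mu\,\mathrm{diam}\,\cD$ for all $k$ (thanks to the reduction to large $s$). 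It is precisely the hypothesis $|\cD\cap B_r(x)|\ge A_0r^d$ that supplies the doubling property, $A_0r^{d+2}\le|\cQ_r^-(\bar z)|\le Nr^{d+2}$ for $\bar z\in\cQ$ and $r$ up to $\mathrm{diam}\,\cD$ (with the usual minor adjustment near $t=-T$), needed both for the Vitali/stopping step and for comparing dilated cylinders. I then split $g=h+b$ in the usual way, $h:=g$ on $\cQ\setminus\bigcup_kQ_k$ and $h:=\frac1{|Q_k|}\int_{Q_k}g$ on $Q_k$, so $\|h\|_{L_\infty(\cQ)}\le Ns$ and $\|h\|_{L_1(\cQ)}\le N\|g\|_{L_1(\cQ)}$; and $b:=\sum_kb_k$ with $b_k:=\big(g-\frac1{|Q_k|}\int_{Q_k}g\big)\chi_{Q_k}$, so each $b_k\in L_p(\cQ)$ is supported in $Q_k$, has $\int_\cQ b_k=0$, and $\sum_k\|b_k\|_{L_1(\cQ)}\le N\|g\|_{L_1(\cQ)}$.

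For the good part, Chebyshev's inequality, the $L_p$-boundedness of $S$, and $\|h\|_{L_p(\cQ)}^p\le\|h\|_{L_\infty(\cQ)}^{p-1}\|h\|_{L_1(\cQ)}\le(Ns)^{p-1}N\|g\|_{L_1(\cQ)}$ give
\[
|\{|Sh|>s/2\}|\le\frac{2^p}{s^p}\,\|S\|_{L_p\rightarrow L_p}^p\,\|h\|_{L_p(\cQ)}^p\le\frac{N}{s}\|g\|_{L_1(\cQ)}.
\]
For the bad part, put $Q_k^*:=Q_{cr_k}(\bar z_k)$ and $G^*:=\bigcup_kQ_k^*$; the volume comparison gives $|Q_k^*|\le Nc^{d+2}|Q_k|$, hence $|G^*|\le\sum_k|Q_k^*|\le\frac{N}{s}\|g\|_{L_1(\cQ)}$. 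Outside $G^*$ I apply the assumed Hörmander-type cancellation to each atom $b_k$ — legitimate because $r_k<\mu\,\mathrm{diam}\,\cD$, $b_k$ is supported in $\cQ_{r_k}^-(\bar z_k)$, and $\int_\cQ b_k=0$ — and use $Sb=\sum_kSb_k$ (convergence in $L_p(\cQ)$, hence a.e.\ along a subsequence, since $S$ is $L_p$-bounded) to get
\[
\int_{\cQ\setminus G^*}|Sb|\le\sum_k\int_{\cQ\setminus Q_k^*}|Sb_k|\le C_0\sum_k\int_{Q_k}|b_k|\le N\|g\|_{L_1(\cQ)},
\]
so $|\{|Sb|>s/2\}|\le|G^*|+\frac2s\int_{\cQ\setminus G^*}|Sb|\le\frac{N}{s}\|g\|_{L_1(\cQ)}$. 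Combining the two estimates through $\{|Sg|>s\}\subset\{|Sh|>s/2\}\cup\{|Sb|>s/2\}$ yields the claim, with $N$ depending only on the listed parameters. The one genuinely delicate point is the construction of the Calderón--Zygmund decomposition on the possibly irregular bounded domain $\cQ$: ensuring all selected cylinders have radius $<\mu\,\mathrm{diam}\,\cD$ (which forces the reduction to large $s$) and that the measure lower bound on $\cD$ yields the required doubling and volume comparisons — once this is set up, the remainder is the classical argument.
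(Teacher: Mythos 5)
Your overall skeleton is exactly the paper's intended argument: the paper proves this lemma by running the classical Calder\'on--Zygmund good/bad decomposition (it simply points to \cite[Lemma 4.1]{dek} and says to redo it with a ``dyadic parabolic cube'' decomposition of $\cQ$ as in \cite[Theorem 11]{Ch90}), and your treatment of the good part via $L_p$-boundedness, of the bad part via the cancellation hypothesis on mean-zero atoms, the enlarged-cylinder measure estimate using $|\cD\cap B_r(x)|\ge A_0r^d$, and the reduction to large $s$ all match that scheme.

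The one genuine soft spot is the step you yourself flag: the construction of the decomposition. As written it does not work. Backward parabolic cylinders intersected with $\cQ$ neither tile $\cQ$ nor nest, so there is no ``standard stopping-time construction'' producing a family $Q_k=\cQ_{r_k}^-(\bar z_k)$ directly; and once you only have bounded overlap rather than disjointness, your definition of the good part $h$ (a cylinder average on each $Q_k$) is ambiguous on overlaps, the identity $g=h+\sum_k b_k$ fails, and the individual $b_k=(g-\fint_{Q_k}g)\chi_{Q_k}$ need no longer sum to $g-h$ nor have mean zero in any consistent decomposition. The repair is precisely what the paper invokes: since the hypothesis $|\cD\cap B_r(x)|\ge A_0 r^d$ makes $\cQ$ (with the parabolic quasi-distance and Lebesgue measure) a space of homogeneous type, Christ's construction \cite{Ch90} provides genuine dyadic ``parabolic cubes'' $Q_\alpha^k\subset\cQ$, disjoint at each generation and nested across generations, on which the stopping-time argument is performed verbatim; each selected cube then sits inside some $\cQ_{r}^-(\bar z)$ with $\bar z\in Q_\alpha^k$ and $r\simeq\operatorname{diam}Q_\alpha^k$ of comparable measure (this is also where the adjustment near $t=-T$ enters, by shifting the top time of the cylinder), so the mean-zero atoms supported on the cubes satisfy the hypothesis on $S$ and the rest of your argument goes through unchanged. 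Alternatively one can use a Whitney covering of the superlevel set of the parabolic maximal function together with a partition of unity to build mean-zero atoms, but some such device replacing the literal ``stopping time on cylinders'' is needed; without it the decomposition you describe is not well defined.
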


Lemma \ref{lemma weak} is similar to \cite[Lemma 4.1]{dek}, where the proof is based on the Calder\'{o}n-Zygmund decomposition. Here we can modify the proof there by using the ``dyadic parabolic cube'' decomposition of $\cQ$. See also \cite[Theorem 11]{Ch90}.

\section{Proofs of Theorem \ref{thm1} and Corollary \ref{coro loc}}\label{proof thm1}
In this section, we give the proofs of Theorem \ref{thm1} and Corollary \ref{coro loc}. First of all, by using Campanato's characterization of H\"{o}lder continuous functions, the global $W_{p}^{1,2}$ estimate for the heat equation, the Sobolev embedding theorem, and Lemma \ref{lem loc lq}, we reduce the proof of Theorem \ref{thm1} to that of Proposition \ref{main prop}, which is about the parabolic systems without lower-order terms . Then we prove some auxiliary estimates that play key roles in deriving  an a priori estimate of the modulus of continuity of $(D_{x'}u,U)$. With the above preparations, we complete the proof of Proposition \ref{main prop} by discussing two cases since our argument and estimates depend on the coordinate system. Finally, we prove Corollary \ref{coro loc} using a duality argument and Theorem \ref{thm1}.

\subsection{Simplified problem}\label{subsec prb}
We first reduce the estimate of $[u]_{1/2,1}$ to the estimate of $\|Du\|_{L_{\infty}}$ by using the following lemma.
\begin{lemma}
Let $u$ be a weak solution to \eqref{systems} in $Q_{1}^{-}$. Suppose that $\|u\|_{L_{\infty}(Q_{1/2}^{-})}<\infty$ and $\|Du\|_{L_{\infty}(Q_{1/2}^{-})}<\infty$. Then
\begin{equation*}
[u]_{1/2,1;Q_{1/4}^{-}}\leq N\left(\|u\|_{L_{\infty}(Q_{1/2}^{-})}+\|Du\|_{L_{\infty}(Q_{1/2}^{-})}
+\|f\|_{L_{\infty}(Q_{1/2}^{-})}+\|g\|_{L_{\infty}(Q_{1/2}^{-})}\right).
\end{equation*}
\end{lemma}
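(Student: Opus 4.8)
The plan is to prove the $C^{1/2,1}$ bound by controlling two pieces separately: the spatial Lipschitz modulus of $u$ (which is already given by $\|Du\|_{L_\infty}$) and the time modulus $\langle u\rangle_{1/2}$, i.e.\ that $|u(t,x)-u(s,x)|\le N|t-s|^{1/2}$ times the right-hand side. The first piece is immediate: for $(t,x),(t,y)\in Q_{1/4}^{-}$ with $|x-y|$ small we connect $x$ to $y$ by a segment inside $B_{1/2}$ and integrate $Du$, so $|u(t,x)-u(t,y)|\le \|Du\|_{L_\infty(Q_{1/2}^{-})}|x-y|$; for $|x-y|$ not small this follows from $\|u\|_{L_\infty}$ directly. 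So the whole content is the temporal estimate. The natural approach is a local energy/Caccioppoli argument combined with the equation written in the weak form: test \eqref{systems} against a suitable time difference of $u$ localized by a cutoff, which is exactly how one proves $u_t\in L_2$-type bounds for divergence-form parabolic equations (cf.\ the use of \cite[Lemma 3.3]{dk3} in the proof of Lemma \ref{lemma xn}).

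\textbf{Key steps.} First I would fix $z_1=(t_1,x_0)$ and $z_2=(t_2,x_0)$ in $Q_{1/4}^{-}$ with $t_1<t_2$ and set $\rho=|t_1-t_2|^{1/2}$; we may assume $\rho$ is small, otherwise the bound follows from $\|u\|_{L_\infty(Q_{1/2}^{-})}$. Second, work on the cylinder $Q_\rho^{-}(z_2)$ (enlarged slightly), choose a spatial cutoff $\eta\in C_0^\infty(B_{2\rho}(x_0))$ with $\eta\equiv1$ on $B_\rho(x_0)$ and $|\nabla\eta|\le N/\rho$, and consider the averaged quantity $\bar u(t):=\fint_{B_{2\rho}(x_0)}u(t,y)\eta(y)^2\,dy\,/\fint \eta^2$, or more simply the spatial average of $u$ against $\eta^2$. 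Using that $u$ solves \eqref{systems} in the sense that $\int u_t\phi = -\int (A^{\alpha\beta}D_\beta u + B^\alpha u)D_\alpha\phi + \int(\hat B^\alpha D_\alpha u + Cu)\phi - \int g_\alpha D_\alpha\phi + \int f\phi$ for test functions $\phi$, take $\phi(t,y)=\eta(y)^2$ (independent of $t$) to get, for $t\in(t_2-\rho^2,t_2)$,
\begin{align*}
\Big|\frac{d}{dt}\int_{B_{2\rho}(x_0)} u\,\eta^2\,dy\Big|
&\le N\rho^{-1}\!\int_{B_{2\rho}(x_0)}\!\big(|Du|+|u|+|g|\big)\,dy + N\!\int_{B_{2\rho}(x_0)}\!\big(|Du|+|u|+|f|\big)\,dy\\
&\le N\rho^{d-1}\big(\|Du\|_{L_\infty(Q_{1/2}^{-})}+\|u\|_{L_\infty(Q_{1/2}^{-})}+\|g\|_{L_\infty(Q_{1/2}^{-})}+\|f\|_{L_\infty(Q_{1/2}^{-})}\big).
\end{align*}
Integrating in $t$ from $t_1$ to $t_2$ (note $t_2-t_1=\rho^2$) gives $\big|\int_{B_{2\rho}(x_0)}(u(t_2,\cdot)-u(t_1,\cdot))\eta^2\big|\le N\rho^{d+1}\Phi$, where $\Phi$ denotes the bracketed sum of $L_\infty$ norms. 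Dividing by $\int\eta^2\sim\rho^d$ shows the $\eta^2$-weighted spatial average of $u$ oscillates by at most $N\rho\,\Phi$ between times $t_1$ and $t_2$. Third, convert this averaged oscillation into a pointwise one at $x_0$: since $\|Du\|_{L_\infty(Q_{1/2}^{-})}<\infty$, for each fixed $t$ we have $|u(t,x_0)-\fint_{B_{2\rho}(x_0)}u(t,\cdot)\eta^2/\!\fint\eta^2|\le N\rho\|Du\|_{L_\infty(Q_{1/2}^{-})}$, and combining the three estimates (at $t=t_1$, at $t=t_2$, and the averaged oscillation) yields $|u(t_2,x_0)-u(t_1,x_0)|\le N\rho\,\Phi = N|t_1-t_2|^{1/2}\Phi$, which is the desired time modulus.

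\textbf{Main obstacle.} The one genuinely delicate point is justifying the differentiation-in-time of $\int u\eta^2\,dy$ and the use of $\phi=\eta^2$ as a test function, since $u\in\mathcal H_p^1$ only has $u_t\in\mathbb H_p^{-1}$, not $u_t\in L_1$. This is handled in the standard way: mollify $u$ in the time variable, or equivalently use Steklov averages $u^h(t)=h^{-1}\int_t^{t+h}u$, run the above argument for $u^h$ (whose time derivative is an honest $L_p$ function and for which $\phi=\eta^2$ is admissible), and pass to the limit $h\to0$ using $u^h\to u$ in $\mathcal H_p^1$ and the a priori bounds $\|u\|_{L_\infty},\|Du\|_{L_\infty}<\infty$ on $Q_{1/2}^{-}$. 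Once this technical point is dealt with, all remaining inequalities are elementary. The spatial Lipschitz estimate requires no new idea.
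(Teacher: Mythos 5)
Your argument is correct, and it takes a more self-contained route than the paper. The paper disposes of this lemma in one line by citing the scheme of \cite[Lemma 6]{d}: one first proves a variant of the parabolic Poincar\'e inequality (\cite[Lemma 3]{d}), which bounds the $L_1$ mean oscillation of $u$ over a parabolic cylinder of radius $r$ by $Nr$ times the $L_\infty$ norms of $Du$, $u$, $g$, $f$, and then invokes Campanato's characterization of parabolic H\"older spaces (\cite[Lemma 4.3]{lgm}) to convert this linear decay of mean oscillation into the $C^{1/2,1}$ bound. Your proof replaces the Campanato step by a direct comparison of the point values $u(t_i,x_0)$ with the $\eta^2$-weighted spatial averages, which is legitimate precisely because the hypothesis $\|Du\|_{L_\infty(Q_{1/2}^-)}<\infty$ lets you pass from averages to point values at cost $N\rho\|Du\|_{L_\infty}$; the cutoff-testing computation controlling $\tfrac{d}{dt}\int u\,\eta^2\,dy$ is exactly the engine behind the parabolic Poincar\'e inequality, so the two proofs share their core estimate. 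What each buys: yours is elementary and avoids quoting the Campanato embedding, while the paper's mean-oscillation route works entirely at the level of integral averages and never needs pointwise values (a moot point in the application, where $u$ is a priori assumed to lie in $C^{0,1}(Q_{3/4}^-)$). Minor remarks: the Steklov-average regularization you flag is indeed the standard and sufficient fix for testing with the time-independent $\eta^2$; the signs of the $g$ and $f$ terms in your weak formulation are flipped, which is harmless since only absolute values enter; and one should restrict to small $\rho$ (as you do by treating large $\rho$ via $\|u\|_{L_\infty}$) so that $[t_1,t_2]\times B_{2\rho}(x_0)\subset Q_{1/2}^-$.
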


\begin{proof}
The lemma follows from a similar argument that led to \cite[Lemma 6]{d} by using Campanato's characterization of H\"{o}lder continuous functions (see \cite[Lemma 4.3]{lgm}) and a variant of the parabolic Poincar\'{e} inequality (see \cite[Lemma 3]{d}).
\end{proof}

Next we show that it suffices to consider the parabolic systems without lower-order terms. Rewrite \eqref{systems} as
$$-u_{t}+D_{\alpha}(A^{\alpha\beta}D_{\beta}u)=\Div (g-Bu)+f-\hat{B}^{\alpha}D_{\alpha}u-Cu\quad \mbox{in}~\cQ=Q_{1}^{-}.$$
Let $v\in W_{p}^{1,2}(Q_{1}^{-})$ satisfy
\begin{align*}
\begin{cases}
-v_{t}+\Delta v=(f-\hat{B}^{\alpha}D_{\alpha}u-Cu)\chi_{Q_{1/2}^{-}}
&\ \mbox{in}~Q_{1}^{-},\\
v=0&\ \mbox{on}~\partial_p Q_1^-.
\end{cases}
\end{align*}
Then by the global $W_{p}^{1,2}$ estimate for the heat equation, we have
\begin{align}\label{W2p v}
\|v\|_{W_{p}^{1,2}(Q_{1}^{-})}\leq N\left(\|u\|_{L_{p}(Q_{1/2}^{-})}
+\|Du\|_{L_{p}(Q_{1/2}^{-})}
+\|f\|_{L_{\infty}(Q_{1/2}^{-})}\right).
\end{align}
By Lemma \ref{lem loc lq}, we have for some $q>d+2$,
\begin{equation}\label{est u H1q}
\|u\|_{\mathcal{H}_{q}^{1}(Q_{1/2}^{-})}\leq N\left(\|u\|_{L_{p}(Q_{1}^{-})}
+\|g\|_{L_{\infty}(Q_{1}^{-})}+\|f\|_{L_{\infty}(Q_{1}^{-})}\right).
\end{equation}
By the Sobolev embedding theorem for $q>d+2$, we obtain $u\in C^{\beta/2,\beta}(Q_{1/2}^{-})$ and
$$
\|u\|_{C^{\beta/2,\beta}(Q_{1/2}^{-})}\leq N\left(\|u\|_{L_{p}(Q_{1}^{-})}
+\|g\|_{L_{\infty}(Q_{1}^{-})}+\|f\|_{L_{\infty}(Q_{1}^{-})}\right),
$$
where $\beta=1-(d+2)/{q}$. Now coming back to \eqref{W2p v}, replacing $p$ with $q$, and using \eqref{est u H1q}, we get $v\in C^{(1+\beta)/2,1+\beta}(Q_{1/2}^{-})$ with
\begin{equation}\label{est Dv Holder}
\|v\|_{C^{(1+\beta)/2,1+\beta}(Q_{1/2}^{-})}\leq N\left(\|u\|_{L_{p}(Q_{1}^{-})}+\|g\|_{L_{\infty}(Q_{1}^{-})}+\|f\|_{L_{\infty}(Q_{1}^{-})}\right).
\end{equation}
Denote $g':=g-Bu+(I-A)Dv$ and $w:=u-v$, then $w$ satisfies
$$-w_{t}+D_{\alpha}(A^{\alpha\beta}D_{\beta}w)=\Div g'\quad \mbox{in}~Q_{1/2}^{-},$$
where
$$\|g'\|_{L_{\infty}(Q_{1/2}^{-})}\leq N\left(\|u\|_{L_{p}(Q_{1}^{-})}+\|g\|_{L_{\infty}(Q_{1}^{-})}+\|f\|_{L_{\infty}(Q_{1}^{-})}\right).$$
Moreover, $g'$ is of piecewise Dini mean oscillation satisfying
\begin{align*}
&\omega_{g'}(r)\\
&\leq N(\Lambda)\big(\omega_{g}(r)+\omega_{B}(r)\|u\|_{L_\infty(Q_{1/2}^{-})}+r^{\beta}
[u]_{\beta/2,\beta;Q_{1/2}^{-}}+\omega_{A}(r)\|Dv\|_{L_\infty(Q_{1/2}^{-})}+r^{\beta}[Dv]_{\beta/2,\beta;Q_{1/2}^{-}}\big)\\
&\leq N\Big(\omega_{g}(r)+\omega_{B}(r)\|u\|_{L_\infty(Q_{1/2}^{-})}
+(\omega_{A}(r)+r^{\beta})\cdot \big(\|u\|_{L_{p}(Q_{1}^{-})}+\|g\|_{L_{\infty}(Q_{1}^{-})}
+\|f\|_{L_{\infty}(Q_{1}^{-})}\big)\Big).
\end{align*}
Therefore, bearing in mind that $u=w+v$ and $v$ satisfies \eqref{est Dv Holder}, the results for $w$ yield these  for $u$.

Finally, we conclude that to finish the proof of Theorem \ref{thm1}, we only need to prove  the following proposition.

\begin{prop}\label{main prop}
Let $\varepsilon\in(0,1)$ and $p\in(1,\infty)$. Suppose that $A$ and $g$ are of piecewise Dini mean oscillation in $\cQ$, and $g\in L_
{\infty}(\cQ)$. If $u\in \mathcal{H}_{p}^{1}(\cQ)$ is a weak solution to
$$-u_{t}+D_{\alpha}(A^{\alpha\beta}\cD_{\beta}u)=\Div g\quad\mbox{in}~\cQ,$$
then $u\in C^{1/2,1}(\overline{{\cQ}_{j}}\cap((-T+\varepsilon,0)\times\cD_{\varepsilon}))$, $j=1,\ldots,M$, and for any fixed $t\in(-T+\varepsilon,0)$, $u(t,\cdot)$ is Lipschitz in $\cD_{\varepsilon}$.
\end{prop}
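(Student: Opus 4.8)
\textbf{Proof strategy for Proposition \ref{main prop}.}
The plan is to run Campanato's iteration scheme to control the decay of the mean oscillation of the pair $(D_{x'}u, U)$, where $U = A^{d\beta}D_\beta u - g_d$, on a shrinking family of parabolic cylinders $Q^-_{\kappa^i r}(z_0)$ centered at a fixed point $z_0$. Because $Du$ jumps across the interfaces $\{x^d = h_j(t,x')\}$ and we only have $L_1$-mean oscillation control on $A$ and $g$, the usual $L_p$ $(p>1)$ Campanato argument is unavailable; instead I would work with $L_q$-means for some $q\in(0,1)$ and use a duality-based weak type-$(1,1)$ estimate to compensate. The first step is to fix the coordinate system associated with $z_0$ as in Section \ref{prob formulation}, so that $\nabla_{x'}h_{j_0}(t_0,x_0') = 0'$ and the interfaces through $z_0$ are, to leading order, flat hyperplanes $\{x^d = h_j(t_0,x_0')\}$; this turns the frozen problem into $\mathcal{P}_0 v = 0$ with $\bar A = \bar A(x^d)$, for which Lemma \ref{lemma xn} gives the key regularity: $[D_{x'}v]_{C^{1/2,1}}$ and $[\bar U]_{C^{1/2,1}}$ are controlled by interior $L_q$-norms of $Dv$.

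The core of the argument is the one-step decay estimate. On a cylinder $Q^-_r(z_0)$ I would decompose $u = v + (u-v)$, where $v$ solves the constant-coefficient system $\mathcal{P}_0 v = 0$ (with $\bar A$, $\bar g$ the piecewise-constant approximants on the strips $\Omega_j$) with $v = u$ on the parabolic boundary of a slightly smaller cylinder, via Lemma \ref{solvability}. For $v$, Lemma \ref{lemma xn} yields
\begin{equation*}
\Big(\fint_{Q^-_{\kappa r}(z_0)} |(D_{x'}v, \bar U_v) - (D_{x'}v, \bar U_v)(z_0)|^q\Big)^{1/q} \le N\kappa \Big(\fint_{Q^-_r(z_0)} |Dv|^q\Big)^{1/q},
\end{equation*}
which is the desired improvement by the factor $\kappa$. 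For the error $w = u-v$, the equation $\mathcal{P}_0 w = \Div\big((\bar A - A)Du\big) + \Div(g - \bar g)$ has a right-hand side whose $L_1$-norm over $Q^-_r$ is bounded by $\big(\omega_A(r) + \hat\omega_1(r)\big)\|Du\|_{L_1} r^{d+2} + $ (similar term for $g$), using \eqref{est A} and Lemma \ref{volume} to absorb the mismatch between the true interfaces $\cQ_j$ and the straightened strips $\Omega_j$. A weak type-$(1,1)$ estimate of the form provided by Lemma \ref{lemma weak} — applied to the solution operator of $\mathcal{P}_0$, whose kernel has the required parabolic decay since $\bar A = \bar A(x^d)$ — then converts this $L_1$ control into an $L_q$-mean bound $\big(\fint_{Q^-_{\kappa r}} |Dw|^q\big)^{1/q} \le N\kappa^{-(d+2)/q}\big(\tilde\omega(r) + \dots\big)$ for $q<1$. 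Summing the telescoping series with Lemma \ref{lemma omiga} (after verifying the doubling-type condition \eqref{equivalence} for the relevant $\tilde\omega$) produces the modulus of continuity: the term $\int_0^{|z_0-z|_p}\tilde\omega_g(s)/s\,ds$ comes from the data oscillation and the term involving $\tilde\omega_A$ comes from the coefficient oscillation, matching the statement of Theorem \ref{thm1}.

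The main obstacle, and where the real work lies, is the interplay between the choice of coordinate system and the two-case analysis hinted at in the introduction: the estimate for $\bar U$ only behaves well in the coordinate system adapted to $z_0$, so one must distinguish the case where the relevant cylinder is comparable to the distance from $z_0$ to the nearest interface (where one can use interior estimates away from all interfaces, treating $u$ as a solution of a system with coefficients that are genuinely of Dini mean oscillation in the De Giorgi--Nash--Moser–plus–Dini sense) from the case where the cylinder straddles an interface (where the straightening and the piecewise-constant approximation on strips are essential, and the $C^{\gamma_0}$-in-time regularity of $h_j$ with $\gamma_0 > 1/2$ is exactly what makes $r^{2\gamma_0 - 1}$ a Dini function in \eqref{est A}). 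Handling the transition between these regimes, and checking that $D_{x'}u$ and $U$ — rather than the full gradient $Du$, which is genuinely discontinuous — are the right quantities that remain continuous across interfaces (because $U$ is the conormal component and must match by the weak formulation), is the delicate point; once the one-step decay is established uniformly in both cases, the Campanato summation and the passage from $L_q$-mean oscillation back to pointwise Hölder/Lipschitz bounds via Campanato's characterization is routine.
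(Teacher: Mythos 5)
Your overall route is the same as the paper's (Campanato iteration in $L_q$-means with $q\in(0,1)$ for the pair $(D_{x'}u,U)$, coefficients frozen in $x^{d}$, a duality-based weak type-$(1,1)$ estimate in the spirit of Lemmas \ref{lemma weak} and \ref{weak est barv}, and the interior $C^{1/2,1}$ estimates of Lemma \ref{lemma xn}; your swap of which piece of the decomposition carries the boundary data and which carries the inhomogeneity is only a cosmetic difference from the paper's $u=u_1+w+v$). However, there is a concrete gap in your one-step decay estimate: you bound the $L_1$-norm of the error source $\Div\big((\bar A-A)Du\big)+\Div(g-\bar g)$ over $Q_r^-(z_0)$ by $\big(\omega_A(r)+\hat\omega_1(r)\big)\|Du\|_{L_1}r^{d+2}$. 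This is false as written: since $A-\bar A$ is controlled only in $L_1$-mean (via \eqref{est A}), the product estimate requires the \emph{local sup norm} of $Du$, i.e.
\begin{equation*}
\big\|(\bar A-A)Du\big\|_{L_1(Q_r^-(z_0))}\leq \|Du\|_{L_{\infty}(Q_r^-(z_0))}\,\big(\omega_A(r)+\hat\omega_1(r)\big)\,r^{d+2},
\end{equation*}
and $\|Du\|_{L_\infty}$ is precisely the quantity the scheme is trying to control. The paper closes this circle by first proving the local boundedness of $Du$ (Lemma \ref{lem3.4}) from the same iteration inequality, under the a priori assumption $u\in C^{0,1}(Q_{3/4}^-)$, and then removing that assumption by an approximation/flattening argument; none of this bootstrap structure appears in your proposal, and without it the telescoping sum does not close.

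A second, substantive omission is that the two-point comparison in different coordinate systems is not carried out. The iteration only controls the oscillation of $(D_{x'}u,U)$ in the coordinate frame attached to a single center $z_0$; to get the stated modulus of continuity one must compare the values at $z_0$ and $z_1$ when their associated frames differ, which in the paper requires (a) showing $\dist(z_0,\partial_p\cQ_{j_0}\cap\{t=t_0\})$ and $\dist(z_0,\partial_p\cQ_{j_0})$ are comparable (this is exactly where $\gamma_0>1/2$ enters beyond making $r^{2\gamma_0-1}$ Dini), and (b) estimating the rotation, i.e.\ $|n_1-n_2|$ and $I-X^{-1}$, by $\tilde\omega_1(|z_0-z_1|_p)$ through the gradient bound on the interface functions $h_j$, which yields $|(D_{x'}u,U)(z_1)-(D_{y'}u,\tilde U)(z_1)|\leq N\|Du\|_{L_\infty}\tilde\omega_1(|z_0-z_1|_p)$. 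You correctly identify this as the delicate point but leave it unexecuted; it is a genuine part of the proof, not routine bookkeeping. Finally, a smaller repair: in the decay step the oscillation must be measured against an arbitrary $\mathbf q\in\mathbb R^{n\times d}$ (the infimum defining $\phi$), which the paper achieves by subtracting from $w$ a function of the form $\sum_{\beta<d}x^\beta q_\beta+h(x^d)$ that is again a solution of the frozen system; comparing only with the value of the solution at $z_0$, as you do, is not quite enough for the iteration as formulated.
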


We will establish an a priori estimate of the modulus of continuity of $(D_{x'}u,U)$ by assuming that $u\in C^{0,1}(Q_{3/4}^{-})$, i.e., for each $t\in(-9/16,0)$, $u(t,\cdot)\in C^{0,1}(B_{3/4})$. The proof of Proposition \ref{main prop} is mainly based on Campanato's approach \cite{c,g}. The general case follows from an approximation argument and the technique of locally flattening the boundaries \cite[p. 2466]{dx}.

Fix $z_{0}=(t_{0},x_{0})\in \Big((-9/16,0)\times B_{3/4}\Big)\cap \cQ_{j_{0}}$, $0<r\leq 1/4$, and take a coordinate system associated with $(t_{0},x_{0})$ as in Subsection \ref{subsection domain}. Denote
\begin{align*}
\bar{\ \mathcal{P}_{z'_{0}}}u:=-u_{t}+D_{\alpha}(\bar{A}^{\alpha\beta}(z'_{0},x^{d})D_{\beta}u),
\end{align*}
where $z'_{0}=(t_{0},x'_{0})$. Next we prove several auxiliary lemmas which play important roles in the proof of Proposition \ref{main prop}.

\subsection{Auxiliary lemmas}
We will begin with a weak type-$(1,1)$ estimate. Before that, we need to modify the coefficients $\bar{A}^{\alpha\beta}(z'_{0},x^{d})$ to get the following parabolic operator defined by
$$
\tilde{\mathcal{P}\ }u:=-u_{t}+D_{\alpha}(\tilde{A}^{\alpha\beta}D_{\beta}u),
$$
where $\tilde{A}^{\alpha\beta}=\eta \bar{A}^{\alpha\beta}(z'_{0},x^{d})+\nu(1-\eta)\delta_{\alpha\beta}\delta_{ij}$ with $\eta\in C_{0}^{\infty}(B_{r}(x_{0}))$ satisfying
$$0\leq\eta\leq1,\quad\eta\equiv1~\mbox{in}~B_{2r/3}(x_{0}),
\quad|\nabla\eta|\leq {6}/{r}.$$
Then we can apply Lemma \ref{solvability} with a scaling to the operator $\tilde{\mathcal{P}\ }$.

\begin{lemma}\label{weak est barv}
Let $p\in(1,\infty)$. Let $v\in \mathcal{H}_{p}^{1}(Q_{r}^{-}(z_{0}))$ be a weak solution to the problem
\begin{align*}
\begin{cases}
\tilde{\mathcal{P}\ }v=\Div(F\chi_{Q_{r/2}^{-}(z_{0})})&\ \mbox{in}~Q_{r}^{-}(z_{0}),\\
v=0&\ \mbox{on}~\partial_p Q_{r}^{-}(z_{0}),
\end{cases}
\end{align*}
where $F\in L_{p}(Q_{r/2}^{-}(z_{0}))$. Then for any $s>0$, we have
\begin{align*}
|\{z\in Q_{r/2}^{-}(z_{0}): |Dv(z)|>s\}|\leq\frac{N}{s}\|F\|_{L_{1}(Q_{r/2}^{-}(z_{0}))},
\end{align*}
where $N=N(n,d,p,\nu)$.
\end{lemma}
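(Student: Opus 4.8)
The plan is to deduce this weak type-$(1,1)$ bound from the general mechanism in Lemma \ref{lemma weak}, applied after rescaling $Q_r^-(z_0)$ to the unit cylinder $Q_1^-$. First I would rescale: set $w(s,y) = v(z_0 + (r^2 s, r y))$ so that $w \in \mathcal{\mathring{H}}_p^1(Q_1^-)$ solves $\tilde{\mathcal{P}}_r w = \Div(\tilde F \chi_{Q_{1/2}^-})$, where $\tilde{\mathcal{P}}_r$ is the rescaled operator with coefficients of the same structural type ($\eta \bar A^{\alpha\beta}(z_0',\cdot) + \nu(1-\eta)\delta$) and $\tilde F(s,y) = F(z_0 + (r^2 s, r y))$; under this scaling the $L_1$ norm of $\tilde F$ over $Q_{1/2}^-$ equals $r^{-d-2}\|F\|_{L_1(Q_{r/2}^-(z_0))}$ up to the Jacobian, and the desired conclusion is scale-invariant in the right way, so it suffices to prove the estimate on $Q_1^-$ with constants independent of $r$. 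The key point that makes the constant $r$-independent is that the modified coefficients $\tilde A^{\alpha\beta}$ satisfy the hypotheses of Lemma \ref{solvability} uniformly (after the same rescaling of the cutoff $\eta$), so the $\mathcal{H}_p^1 \to$ solution map is bounded on $L_p$ with a constant depending only on $n,d,p,\nu$.

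Next I would define the operator $S$ on $L_p(Q_1^-)$ by $S F = Dw$, where $w$ is the unique $\mathcal{\mathring{H}}_p^1(Q_1^-)$ solution of $\tilde{\mathcal{P}} w = \Div(F\chi_{Q_{1/2}^-})$ with zero initial data; by Lemma \ref{solvability}(2) this is well-defined and bounded on $L_p$. To invoke Lemma \ref{lemma weak} I must verify its cancellation-type hypothesis: for any $\bar z \in Q_1^-$, any $\rho$ with $0 < \rho < \mu\,\mathrm{diam}$, and any $b \in L_p$ supported in $Q_\rho^-(\bar z)$ with $\int b = 0$, one has $\int_{Q_1^- \setminus Q_{c\rho}(\bar z)} |Sb| \le C_0 \int_{Q_\rho^-(\bar z)} |b|$. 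This is the standard ``$L_1$ bound away from the support for the gradient of a solution with mean-zero data'' estimate, and it is proved exactly as in the elliptic transmission literature and in \cite[Lemma 4.1]{dek}: since the coefficients $\tilde A^{\alpha\beta}$ depend only on $x^d$ (plus a smooth perturbation near $\partial B_1$), the fundamental solution / Green's function associated with $\tilde{\mathcal{P}}$ enjoys Gaussian-type bounds with derivative decay, and one writes $Sb(z) = \int D_z G(z,\zeta)\,b(\zeta)\,d\zeta = \int (D_z G(z,\zeta) - D_z G(z,\bar\zeta_0))\,b(\zeta)\,d\zeta$ using $\int b = 0$ for a fixed reference point $\bar\zeta_0$, then integrates the Hölder-in-$\zeta$ derivative estimate for $G$ over $z$ outside $Q_{c\rho}(\bar z)$. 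The required off-diagonal estimates for $D_z G$ follow from the interior regularity of solutions to $\tilde{\mathcal{P}}$ (which in turn rest on Lemma \ref{lemma xn} and Lemma \ref{lem loc lq}) together with parabolic scaling.

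The main obstacle I anticipate is this last verification of the off-diagonal $L_1$ estimate for $Sb$ with an absolute constant: one has to control $Dw$ near the lateral boundary $\partial B_1$ (where $\tilde A$ transitions to the constant-coefficient Laplacian) and to handle the fact that $G$ is the Green's function of a Dirichlet problem on $Q_1^-$ rather than a whole-space fundamental solution, so the reflection/boundary contributions must also be estimated — this is where boundary regularity for $\tilde{\mathcal{P}}$, guaranteed by Lemma \ref{solvability}, is used. Once the hypothesis of Lemma \ref{lemma weak} is checked, the conclusion $|\{z \in Q_1^-: |Sb(z)| > s\}| \le (N/s)\int_{Q_1^-}|b|$ is immediate, and undoing the scaling (noting that $Dv$ corresponds to $r^{-1} Dw$ and the measure scales by $r^{d+2}$, while the $L_1$ norm of $F$ scales by $r^{d+2}$ relative to that of $\tilde F$) yields exactly $|\{z \in Q_{r/2}^-(z_0): |Dv(z)| > s\}| \le (N/s)\|F\|_{L_1(Q_{r/2}^-(z_0))}$ with $N = N(n,d,p,\nu)$, completing the proof. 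One should double-check that the restriction of $S$ to the half-size cylinder $Q_{1/2}^-$ in the statement of Lemma \ref{weak est barv} is harmless, since we are only claiming the distributional bound on that smaller set.
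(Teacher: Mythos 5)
Your overall skeleton (normalize to the unit cylinder, define an operator $S$, verify the cancellation hypothesis of Lemma \ref{lemma weak}, then undo the scaling) is the same as the paper's, but the step where you actually verify the off-diagonal $L_1$ estimate has a genuine gap. You propose to write $Sb$ through a Green's function for $\tilde{\mathcal{P}}$ and to exploit ``Gaussian-type bounds with derivative decay'' together with a H\"older-in-$\zeta$ estimate for the derivative of $G$. Two things go wrong. First, this is a parabolic \emph{system} ($n\times n$), so De Giorgi--Nash--Moser type Gaussian bounds for the fundamental solution are not available in general. Second, and more fundamentally, the coefficients $\bar{A}^{\alpha\beta}(z_0',x^d)$ are only piecewise constant in $x^d$, so the full spatial gradient of solutions --- and hence the full gradient of the Green's function in the source variable, which is exactly what gets paired with the divergence-form datum $b$ (note $Dw(z)=\int D_zD_\zeta G(z,\zeta)\,b(\zeta)\,d\zeta$, not $\int D_zG\cdot b$) --- is \emph{discontinuous} across the interfaces $\{x^d=\mathrm{const}\}$. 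The modulus-of-continuity estimate you need in order to exploit $\int b=0$ is therefore false for the normal component $D_{\zeta^d}$; only the tangential derivatives and the conormal combination $\bar{A}^{d\beta}D_\beta$ are continuous (Lemma \ref{lemma xn}).

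The paper circumvents precisely this obstruction by an extra device you are missing: it introduces the matrix $E(x^d)$ with $E^{\alpha\beta}=\delta_{\alpha\beta}$ ($\alpha,\beta<d$), $E^{\alpha d}=\bar{A}^{d\alpha}$, $E^{d\beta}=0$ ($\beta<d$), writes $F=E\hat F$, and studies $S:\hat F\mapsto Dv$. Then in the duality step (testing against $h$ supported in a dyadic annulus and solving the adjoint problem $\tilde{\mathcal{P}}^{*}v_0=\Div h$) the pairing becomes $\int\bigl(D_{x'}v_0,\,V_0\bigr)\cdot\hat b$ with $V_0=\bar A^{d\beta}D_\beta v_0$, and the mean-zero cancellation is applied to $\bigl(D_{x'}v_0,V_0\bigr)$, which \emph{are} $C^{1/2,1}$ in the interior by the adjoint version of Lemma \ref{lemma xn}; combined with the $\mathcal{H}^1_{p'}$ estimate this yields $\|Dv_1\|_{L_1(\text{annulus})}\le N\rho R^{-1}\|\hat b\|_{L_1}$, and summation over annuli gives the hypothesis of Lemma \ref{lemma weak}. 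Since $\bar A^{dd}\ge\nu$ makes $E$ invertible with bounded inverse, the weak type-$(1,1)$ bound for $\hat F$ transfers to $F$, which is the statement of the lemma. No Green's function or Gaussian bounds are needed, and no boundary reflection analysis near $\partial B_1$ is required beyond the solvability estimate of Lemma \ref{solvability}. Without the $E$-matrix reformulation (or an equivalent way of restricting the cancellation to the continuous components), your argument does not close.
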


\begin{proof}
The proof is a modification of \cite[Lemma 3.2]{dx}. We set $z_{0}=0$, $r=1$, $\bar{A}^{\alpha\beta}(x^{d}):=\bar{A}^{\alpha\beta}(0',x^{d})$, and $\bar{\mathcal{P}\ }:=\bar{\ \mathcal{P}_{0'}}$ for simplicity. Suppose $E=(E^{\alpha\beta}(x^{d}))$ is a $d\times d$ matrix with
\begin{align*}
E^{\alpha\beta}(x^{d})&=\delta_{\alpha\beta}~\  \mbox{for}~\alpha, \beta\in\{1,\ldots,d-1\};\quad E^{\alpha d}(x^{d})=\bar{A}^{d\alpha}(x^{d})~\  \mbox{for}~\alpha\in\{1,\ldots,d\};\\
E^{d\beta}(x^{d})&=0~ \ \mbox{for}~\beta\in\{1,\ldots,d-1\}.
\end{align*}
For any $\hat F\in L_{p}(Q_{1/2}^{-})$, let $F=E\hat F$ and solve for $v$. It follows from Lemma \ref{solvability} that $S:\hat F\to Dv$ is a bounded linear operator on $L_{p}(Q_{1/2}^{-})$. So we only need to prove that $S$ satisfies the hypothesis of Lemma \ref{lemma weak}. Set $c=24$ and fix $\bar{z}=(\bar{t},\bar{y})\in Q_{1/2}^{-}$, $0<r<1/4$. Let $\hat{b}\in L_{p}(Q_{1}^{-})$ be supported in $Q_{r}^{-}(\bar{z})\cap Q_{1/2}^{-}$ with mean zero, $b=E\hat{b}$, and $v_{1}\in \mathcal{H}_{p}^{1}(Q_{1}^{-})$ be the unique weak solution of
\begin{align*}
\begin{cases}
\tilde{\mathcal{P}\ }v_{1}=\Div b&\ \mbox{in}~Q_{1}^{-},\\
v_{1}=0&\ \mbox{on}~\partial_p Q_{1}^{-}.
\end{cases}
\end{align*}
For any $R\geq cr$ such that $Q_{1/2}^{-}\setminus Q_{R}(\bar{z})\neq\emptyset$ and $h\in C_{0}^{\infty}((Q_{2R}(\bar{z})\setminus Q_{R}(\bar{z}))\cap Q_{1/2}^{-})$, let $v_{0}\in \mathcal{H}_{p'}^{1}(Q_{1}^{-})$ be a weak solution of
\begin{align*}
\begin{cases}
\tilde{\mathcal{P}^{*}\ }v_{0}=\Div h&\ \mbox{in}~Q_{1}^{-},\\
v_{0}=0&\ \mbox{on}~((-1,0]\times \partial B_{1})\cup (\{0\}\times \overline{B_1}),
\end{cases}
\end{align*}
where ${1}/{p}+{1}/{p'}=1$ and $\tilde{\mathcal{P}^{*}\ }$ is the adjoint operator of $\tilde{\mathcal{P}\ }$ defined by
$$\tilde{\mathcal{P}^{*}\ }u:=u_{t}+D_{\beta}(\tilde{A}^{\beta\alpha}D_{\alpha}u).$$
In view of the definition of weak solutions and the assumption of $\hat b$, we have
\begin{align*}
\int_{Q_{1/2}^{-}}Dv_{1}\cdot h&=\int_{Q_{1/2}^{-}}Dv_{0}\cdot b=\int_{Q_{r}^{-}(\bar{z})\cap Q_{1/2}^{-}}
\left(
          D_{x'}v_{0}, V_{0}
 \right)\cdot\hat{b}\\
&=\int_{Q_{r}^{-}(\bar{z})\cap Q_{1/2}^{-}}
\left(
          D_{x'}v_{0}-D_{x'}v_{0}(\bar{z}), V_{0}-V_{0}(\bar{z})
 \right)\cdot\hat{b},
\end{align*}
where
$V_{0}=\bar{A}^{d\beta}(x^{d})D_{\beta}v_{0}$. Hence, we have
\begin{align}\label{esti Dv h}
&\left|\int_{(Q_{2R}(\bar{z})\setminus Q_{R}(\bar{z}))\cap Q_{1/2}^{-}}Dv_{1}\cdot h\right|\nonumber\\
&\leq \|\hat{b}\|_{L_{1}(Q_{r}^{-}(\bar{z})\cap Q_{1/2}^{-})}\left|\left|\left(
          D_{x'}v_{0}-D_{x'}v_{0}(\bar{z}), V_{0}-V_{0}(\bar{z})
 \right)\right|\right|_{L_{\infty}(Q_{r}^{-}(\bar{z})\cap Q_{1/2}^{-})}.
\end{align}
Moreover, we find that $v_{0}\in \mathcal{H}_{p'}^{1}(Q_{1}^{-})$ satisfies
$$\bar{\mathcal{P}^{*}\ }v_{0}=0\quad\mbox{in}~Q_{R/12}^{-}(\bar{z}),$$
where we recalled that $\eta\equiv1$ in $B_{2/3}$ and $B_{R/12}(\bar{y})\subset B_{2/3}$. By applying a similar argument that led to \eqref{DDu} and \eqref{DU} to the adjoint operator, and using a suitable scaling, $r\leq R/24$, and the $\mathcal{H}^1_{p}$ estimate, we have
\begin{align}\label{est Dv0 V0}
&\|D_{x'}v_{0}-D_{x'}v_{0}(\bar{z})\|_{L_{\infty}(Q_{r}^{-}(\bar{z})\cap Q_{1/2}^{-})}
+\|V_{0}-V_{0}(\bar{z})\|_{L_{\infty}(Q_{r}^{-}(\bar{z})\cap Q_{1/2}^{-})}\nonumber\\
&\leq Nr([D_{x'}v_{0}]_{C^{1/2,1}(Q_{R/24}^{-}(\bar{z}))}+[V_{0}]_{C^{1/2,1}(Q_{R/24}^{-}(\bar{z}))})\nonumber\\
&\leq NrR^{-1-{(d+2)}/{p'}}\|Dv_{0}\|_{L_{p'}(Q_{R/12}^{-}(\bar{z}))}\nonumber\\
&\leq NrR^{-1-{(d+2)}/{p'}}\|h\|_{L_{p'}((Q_{2R}(\bar{z})\setminus Q_{R}(\bar{z}))\cap Q_{1/2}^{-})}.
\end{align}
Substituting the above estimate \eqref{est Dv0 V0} into \eqref{esti Dv h} and using the duality and H\"{o}lder's inequality, we have
\begin{align}\label{dilation Dv}
\|Dv_{1}\|_{L_{1}((Q_{2R}(\bar{z})\setminus Q_{R}(\bar{z}))\cap Q_{1/2}^{-})}\leq NrR^{-1}\|\hat{b}\|_{L_{1}(Q_{r}^{-}(\bar{z})\cap Q_{1/2}^{-})}.
\end{align}
Let $N_{0}$ be the smallest positive integer such that $Q_{1/2}^{-}\subset Q_{2^{N_{0}}cr}(\bar{z})$. By taking $R=cr, 2cr,\ldots,2^{N_{0}-1}cr$ in \eqref{dilation Dv} and summarizing, we obtain
\begin{align*}
\int_{Q_{1/2}^{-}\setminus Q_{cr}(\bar{z})}|Dv_{1}|\ dx\ dt&\leq N\sum_{k=1}^{N_{0}}2^{-k}\|\hat{b}\|_{L_{1}(Q_{r}^{-}(\bar{z})\cap Q_{1/2}^{-})}\leq N\int_{Q_{r}^{-}(\bar{z})\cap Q_{1/2}^{-}}|\hat{b}|\ dx\ dt.
\end{align*}
Therefore, $S$ satisfies the hypothesis of Lemma \ref{lemma weak}. The proof of this lemma is finished.
\end{proof}

Denote
$$
\phi(z_{0},r):=\inf_{\mathbf q\in\mathbb R^{n\times d}}\left(\fint_{Q_{r}^{-}(z_{0})}|(D_{x'}u,U)-\mathbf q|^{q}\ dx\ dt\right)^{1/q},$$
where $0<q<1$ is some fixed exponent. We are going to use Lemma \ref{weak est barv} to prove an iteration formula about the function $\phi(z_{0},r)$, from which we can derive the following
\begin{lemma}\label{lemma itera}
For any $\gamma\in (0,1)$ and $0<\rho\leq r\leq 1/4$, we have
\begin{align}\label{est phi'}
\phi(z_{0},\rho)\leq N\Big(\frac{\rho}{r}\Big)^{\gamma}r^{-d-2}\|(D_{x'}u,U)\|_{L_{1}(Q_{r}^{-}(z_{0}))}
+N\tilde{\omega}_{A}(\rho)\|Du\|_{L^{\infty}(Q_{r}^{-}(z_{0}))}+N\tilde{\omega}_{g}(\rho),
\end{align}
where $N=N(n,d,p,\nu,\gamma)$, and $\tilde\omega_{\bullet}(t)$ is a Dini function derived from $\omega_{\bullet}(t)$.
\end{lemma}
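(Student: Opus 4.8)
The plan is to first establish a one-step decay estimate for $\phi(z_0,r)$ and then iterate it. Fix $z_0$ and a radius $r\le 1/4$ in the coordinate system associated with $z_0$. On $Q_r^-(z_0)$ decompose $u = v + w$, where $v$ solves the problem in Lemma \ref{weak est barv} with right-hand side $F := (A - \bar A(z_0',x^d))D_\beta u$ restricted to $Q_{r/2}^-(z_0)$ and zero lateral/initial data (using the modified operator $\tilde{\mathcal P}$, which agrees with $\bar{\mathcal P}_{z_0'}$ on $B_{2r/3}(x_0)$), so that $w$ satisfies $\bar{\mathcal P}_{z_0'}w = \Div\big((\bar A(z_0',x^d) - A)D_\beta u + (g - \bar g)\big) + \Div\bar g$ on a smaller cylinder. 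The key point is that the ``bad'' part $v$ is controlled only in $L_1$ via the weak type-$(1,1)$ estimate of Lemma \ref{weak est barv}, giving $|\{|Dv|>s\}| \le (N/s)\|F\|_{L_1(Q_{r/2}^-(z_0))}$, which by the standard layer-cake argument for $L^q$ quasinorms with $q<1$ yields $\big(\fint_{Q_{r/2}^-(z_0)}|Dv|^q\big)^{1/q} \le N\,\fint_{Q_{r/2}^-(z_0)}|F| \le N\big(\omega_A(r) + \hat\omega_1(r)\big)\|Du\|_{L^\infty(Q_r^-(z_0))}$ after also accounting for $|\hat A - \bar A|$ via \eqref{est A}. (Here one absorbs the analogous $g$-term $\omega_g(r) + \hat\omega_1(r)\|g\|_\infty$ into what will become $\tilde\omega_g$.)

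For the ``good'' part $w$, since its leading coefficients depend only on $x^d$, Lemma \ref{lemma xn} applies: $D_{x'}w$ and $\bar U^w := \bar A^{d\beta}(z_0',x^d)D_\beta w$ are $C^{1/2,1}$ on the half-radius cylinder, so for $\rho \le r/4$ one gets the Campanato-type bound $\big(\fint_{Q_\rho^-(z_0)}|(D_{x'}w,\bar U^w) - \text{(its average)}|^q\big)^{1/q} \le N(\rho/r)\, r^{-d-2}\|(D_{x'}w,\bar U^w)\|_{L_1(Q_{r/2}^-(z_0))}$. Combining the two pieces, noting that $U = \bar U + (A - \bar A)^{d\beta}D_\beta u - (g - g_d$-type correction$)$ so that $|U - \bar U^w|$ is again controlled by $(\omega_A(r)+\hat\omega_1(r))\|Du\|_\infty + \omega_g(r) + \hat\omega_1(r)\|g\|_\infty$, and using $\|(D_{x'}w,\bar U^w)\|_{L_1} \le \|(D_{x'}u,U)\|_{L_1(Q_r^-(z_0))} + (\text{error terms})\cdot r^{d+2}$, we arrive at a one-step inequality of the form
\begin{align*}
\phi(z_0,\rho) \le N\Big(\frac{\rho}{r}\Big)\, r^{-d-2}\|(D_{x'}u,U)\|_{L_1(Q_r^-(z_0))} + N\,\omega_\sharp(r)\,\|Du\|_{L^\infty(Q_r^-(z_0))} + N\,\omega_{g,\sharp}(r),
\end{align*}
valid for $\rho \le r/4$, where $\omega_\sharp := \omega_A + \hat\omega_1$ and similarly for the $g$-term.

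The final step is the iteration. Choose $\kappa\in(0,1/4)$ so that $N\kappa \le \kappa^\gamma$ (possible since $\gamma<1$), apply the one-step estimate with $r \mapsto \kappa^i r$, $\rho = \kappa^{i+1}r$, and sum the geometric-type series; the $L_1$-terms telescope in the usual Campanato way, while the sums $\sum_i \omega_\sharp(\kappa^i r)$ and $\sum_i \omega_{g,\sharp}(\kappa^i r)$ are converted into Dini integrals $\int_0^\rho \omega_\sharp(s)/s\,ds =: N\tilde\omega_A(\rho)$ (plus the $\rho^{2\gamma_0-1}$-type piece, which is itself Dini and gets folded in) and $\int_0^\rho \omega_{g,\sharp}(s)/s\,ds =: N\tilde\omega_g(\rho)$, using Lemma \ref{lemma omiga} together with the concavity/doubling of $\omega_0$ (hence of $\omega_1$, hence of $\hat\omega_1$) established around \eqref{est A}. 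Passing from dyadic-in-$\kappa$ radii to arbitrary $\rho\le r$ is routine. The main obstacle is the bookkeeping at the junction of the two pieces: one must verify that every error generated in passing between $u$ and $w$, between $U$ and $\bar U$, and between the frozen coefficients $\bar A(z_0',x^d)$ and $\bar A(x^d)$ is genuinely of the form $\omega_\sharp(r)\|Du\|_{L^\infty(Q_r^-(z_0))}$ (and not, say, $\|Du\|_{L^\infty}$ on a cylinder comparable to $Q_r^-$ with an uncontrolled constant), so that the $\|Du\|_{L^\infty(Q_r^-(z_0))}$ on the right of \eqref{est phi'} can legitimately be taken over the fixed outer cylinder rather than shrinking ones — this is what makes the subsequent absorption argument (carried out elsewhere in Section \ref{proof thm1}) possible.
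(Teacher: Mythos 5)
Your setup (weak type-$(1,1)$ bound for the ``bad'' part $v$ via Lemma \ref{weak est barv}, interior $C^{1/2,1}$ regularity of the ``good'' part via Lemma \ref{lemma xn}, then iteration with $N\kappa\le\kappa^\gamma$) is the right skeleton, but the one-step inequality you display is too weak to be iterated, and this is a genuine gap. You bound the oscillation of $(D_{x'}w,\bar U^w)$ at scale $\rho$ by $N(\rho/r)\,r^{-d-2}\|(D_{x'}w,\bar U^w)\|_{L_1(Q_{r/2}^-(z_0))}$, i.e.\ by the full average at scale $r$, and hence arrive at $\phi(z_0,\rho)\le N(\rho/r)\,r^{-d-2}\|(D_{x'}u,U)\|_{L_1(Q_r^-(z_0))}+N\omega_\sharp(r)\|Du\|_{L_\infty}+\dots$. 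The average $r^{-d-2}\|(D_{x'}u,U)\|_{L_1(Q_r^-(z_0))}$ does not decay as $r\to0$ (it is comparable to $|(D_{x'}u,U)(z_0)|$ plus lower order), so applying this bound at scales $\kappa^i r$ produces only a single factor of $\kappa$ at each step; nothing ``telescopes'', and you can obtain neither the $(\rho/r)^\gamma$ prefactor nor error terms evaluated at the small scale $\rho$ (applying your inequality once with the given pair $(\rho,r)$ leaves the errors as $\omega_\sharp(r)$, which is useless for the continuity argument that follows). What is needed — and what the paper does — is a one-step estimate of the form $\phi(z_0,\kappa r)\le N_0\kappa\,\phi(z_0,r)+N\kappa^{-(d+2)/q}\big(\bar\omega_A(r)\|Du\|_{L_\infty}+\bar\omega_g(r)\big)$, oscillation controlled by oscillation. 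To get it one subtracts from $w$ an arbitrary competitor $\sum_{\beta<d}x^\beta q_\beta+h(x^d)$, with $h$ built from $\big(\bar A^{dd}(z_0',\cdot)\big)^{-1}$ so that the difference $\tilde w$ still solves the same homogeneous system $\bar{\mathcal P}_{z_0'}\tilde w=0$ and satisfies $D_{x'}\tilde w=D_{x'}w-q'$, $\tilde W=W-q_d$; applying Lemma \ref{lemma xn} to $\tilde w$ gives \eqref{holder w bar}, with the right-hand side the $L^q$ oscillation $\fint|(D_{x'}w,W)-\mathbf q|^q$ for arbitrary $\mathbf q$. Only after the iteration \eqref{iteration phi} is established does one invoke, a single time at the top scale, the trivial bound $\phi(z_0,r)\le Nr^{-d-2}\|(D_{x'}u,U)\|_{L_1(Q_r^-(z_0))}$ to reach \eqref{est phi'}.

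Two secondary points also need repair. First, with your choice of $F$ the function $w$ does not solve a homogeneous equation: the term $\Div\bar g$ remains on the right, so Lemma \ref{lemma xn} is not applicable to $w$ as you defined it. The paper removes it by subtracting the auxiliary function $u_1(x^d)$ of \eqref{defw u1}, for which $\bar A^{dd}(z_0',x^d)D_du_1=\bar g_d(z_0',x^d)$ and hence $\bar{\mathcal P}_{z_0'}u_1=\Div\bar g$, and works with $w=u-u_1-v$. Second, the piece $g-\bar g$ must be put into $F$ in the weak type-$(1,1)$ step (together with $(\bar A-A)Du$, note also your sign of $F$ is off), rather than ``absorbed later'': it is exactly through Lemma \ref{weak est barv} and \eqref{est A} that it is converted into the $\bar\omega_g(r)$ error that eventually becomes $\tilde\omega_g(\rho)$.
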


\begin{proof}
We apply Lemma \ref{weak est barv} with
$$
F=(\bar{A}(z'_{0},x^{d})-A(t,x))Du+g(t,x)-\bar{g}(z'_{0},x^{d}),
$$
\eqref{est A}, and follow the same argument as in deriving \cite[(3.7)]{dx} to obtain that
\begin{align}\label{holder v bar}
\left(\fint_{Q_{r/2}^{-}(z_{0})}|D_{x'}v|^{q}\ dx\ dt+\fint_{Q_{r/2}^{-}(z_{0})}|V|^{q}\ dx\ dt\right)^{{1}/{q}}\leq N\Big(\bar\omega_{A}(r)\|Du\|_{L_{\infty}(Q_{r}^{-}(z_{0}))}+\bar\omega_{g}(r)\Big),
\end{align}
where $\bar\omega_{\bullet}(r)=\omega_{\bullet}(r)+\hat\omega_1(r)$ and $V=\bar{A}^{d\beta}(z'_{0},x^{d})D_{\beta}v(t,x)$.

We now claim that
\begin{align}\label{iteration phi}
\phi(z_{0},\kappa^{j}r)\leq\kappa^{j\gamma}\phi(z_{0},r)+N\|Du\|_{L_{\infty}(Q_{r}^{-}(z_{0}))}\tilde\omega_{A}(\kappa^{j}r)+N\tilde\omega_{g}(\kappa^{j}r),
\end{align}
where $\kappa\in(0,1/2)$ is some fixed constant and
\begin{align}\label{tilde phi}
\tilde\omega_{\bullet}(t)=\sum_{i=1}^{\infty}\kappa^{i\gamma}\Big(\bar\omega_{\bullet}(\kappa^{-i}t)\chi_{\kappa^{-i}t\leq1}+\bar\omega_{\bullet}(1)\chi_{\kappa^{-i}t>1}\Big).
\end{align}
Furthermore, $\tilde\omega_{\bullet}(t)$ is a Dini function (see Lemma 1 in \cite{d}) and satisfies \eqref{equivalence}. Then for any $\rho$ satisfying $0<\rho\leq r\leq 1/4$, we take $j$ to be the integer with $\kappa^{j+1}<{\rho}/{r}\leq\kappa^{j}$. By using  \eqref{iteration phi} and \eqref{equivalence}, we have
\begin{align}\label{itera phi}
\phi(z_{0},\rho)\leq N\Big(\frac{\rho}{r}\Big)^{\gamma}\phi(z_{0},r)+N\tilde{\omega}_{A}(\rho)\|Du\|_{L_{\infty}(Q_{r}^{-}(z_{0}))}+N\tilde{\omega}_{g}(\rho),
\end{align}
where, it follows from H\"{o}lder's inequality that
\begin{align}\label{est phi}
\phi(z_{0},r)\leq\left(\fint_{Q_{r}^{-}(z_{0})}|(D_{x'}u,U)|^{q}\ dx\ dt\right)^{1/q}\leq Nr^{-d-2}\|(D_{x'}u,U)\|_{L_{1}(Q_{r}^{-}(z_{0}))}.
\end{align}
Combining \eqref{est phi} and \eqref{itera phi}, we get \eqref{est phi'}.

Finally, we prove the claim \eqref{iteration phi}. Let
\begin{equation}\label{defw u1}
u_{1}(x^{d})=\int_{x_{0}^{d}}^{x^{d}}(\bar{A}^{dd}(z'_{0},s))^{-1}\bar{g}_{d}(z'_{0},s)\,ds,
\quad \bar{u}=u-u_{1},\quad w=\bar{u}-v.
\end{equation}
Then a direct calculation yields $\bar{\ \mathcal{P}_{z'_{0}}}w=0$ in $Q_{r/2}^{-}(z_{0})$. For any $\kappa\in\big(0,1/4\big)$, by Lemma \ref{lemma xn} with a suitable scaling, we have
\begin{align}\label{DW kappa}
&\|D_{x'}w-(D_{x'}w)_{Q_{\kappa r}^{-}(z_{0})}\|_{L_{q}(Q_{\kappa r}^{-}(z_{0}))}^{q}+\|W-(W)_{Q_{\kappa r}^{-}(z_{0})}\|_{L_{q}(Q_{\kappa r}^{-}(z_{0}))}^{q}\nonumber\\
&\leq N(\kappa r)^{d+2+q}\left([D_{x'}w]_{C^{1/2,1}(Q_{r/4}^{-}(z_{0}))}^{q}
+[W]_{C^{1/2,1}(Q_{r/4}^{-}(z_{0}))}^{q}\right)\nonumber\\
&\leq N\kappa^{d+2+q}\int_{Q_{r/2}^{-}(z_{0})}|Dw|^{q}\ dx\ dt\nonumber\\
&\leq N\kappa^{d+2+q}\int_{Q_{r/2}^{-}(z_{0})}|(D_{x'}w,W)|^{q}\ dx\ dt,
\end{align}
where $W=\bar{A}^{d\beta}(z'_{0},x^{d})D_{\beta}w$. Define
\begin{equation*}
h(x^{d}):=\int_{0}^{x^{d}}\Big(\bar{A}^{dd}(z'_{0},s)\Big)^{-1}
\Big(q_{d}-\sum_{\beta=1}^{d-1}\bar{A}^{d\beta}(z'_{0},s)q_{\beta}\Big)\ ds,\quad \mathbf q=(q',q_{d})\in \mathbb R^{n\times d},
\end{equation*}
and
\begin{align*}
\tilde{w}:=w-\sum_{\beta=1}^{d-1}x^{\beta}q_{\beta}-h(x^{d}).
\end{align*}
Then
$$D_{x'}\tilde{w}=D_{x'}w-q',\quad \tilde{W}:=\bar{A}^{d\beta}(z'_{0},x^{d})D_{\beta}\tilde{w}=W-q_{d}.$$
Moreover, $\bar{\ \mathcal{P}_{z'_{0}}}\tilde{w}=0$ in $Q_{r/2}^{-}(z_{0})$. Now replacing $w$ and $W$ with $\tilde{w}$ and $\tilde{W}$ in \eqref{DW kappa}, respectively, we get
\begin{align*}
&\|D_{x'}w-(D_{x'}w)_{Q_{\kappa r}^{-}(z_{0})}\|_{L_{q}(Q_{\kappa r}^{-}(z_{0}))}^{q}+\|W-(W)_{Q_{\kappa r}^{-}(z_{0})}\|_{L_{q}(Q_{\kappa r}^{-}(z_{0}))}^{q}\nonumber\\
&\leq N\kappa^{d+2+q}\int_{Q_{r/2}^{-}(z_{0})}|(D_{x'}w-q',W-q_{d})|^{q}\ dx\ dt\\
&= N\kappa^{d+2+q}\int_{Q_{r/2}^{-}(z_{0})}|(D_{x'}w,W)-\mathbf q|^{q}\ dx\ dt,
\end{align*}
which implies
\begin{align}\label{holder w bar}
&\left(\fint_{Q_{\kappa r}^{-}(z_{0})}|D_{x'}w-(D_{x'}w)_{Q_{\kappa r}^{-}(z_{0})}|^{q}\ dx\ dt+\fint_{Q_{\kappa r}^{-}(z_{0})}|W-(W)_{Q_{\kappa r}^{-}(z_{0})}|^{q}\ dx\ dt\right)^{{1}/{q}}\nonumber\\
&\leq N_{0}\kappa\left(\fint_{Q_{r/2}^{-}(z_{0})}|(D_{x'}w,W)-\mathbf{q}|^{q}\ dx\ dt\right)^{{1}/{q}},
\end{align}
where $N_{0}=N_{0}(n,d,p,\nu,\Lambda)$.
Recalling that $\bar{u}=w+v$, we obtain from \eqref{holder w bar} that
\begin{align}\label{iteration u bar}
&\left(\fint_{Q_{\kappa r}^{-}(z_{0})}|D_{x'}\bar{u}-(D_{x'}w)_{Q_{\kappa r}^{-}(z_{0})}|^{q}+|\bar{U}-(W)_{Q_{\kappa r}^{-}(z_{0})}|^{q}\ dx\ dt\right)^{{1}/{q}}\nonumber\\
&\leq2^{{1}/{q}-1}\left(\fint_{Q_{\kappa r}^{-}(z_{0})}|D_{x'}w-(D_{x'}w)_{Q_{\kappa r}^{-}(z_{0})}|^{q}+|W-(W)_{Q_{\kappa r}^{-}(z_{0})}|^{q}\ dx\ dt\right)^{{1}/{q}}\nonumber\\
&\quad+N\left(\fint_{Q_{\kappa r}^{-}(z_{0})}|D_{x'}v|^{q}+|V|^{q}\ dx\ dt\right)^{{1}/{q}}\nonumber\\
&\leq N_{0}\kappa\left(\fint_{Q_{r/2}^{-}(z_{0})}|(D_{x'}\bar{u},\bar{U})-\mathbf{q}|^{q}\ dx\ dt\right)^{{1}/{q}}+N\kappa^{-(d+2)/{q}}
\left(\fint_{Q_{r/2}^{-}(z_{0})}|D_{x'}v|^{q}+|V|^{q}\ dx\ dt\right)^{{1}/{q}},
\end{align}
where $\bar{U}=\bar{A}^{d\beta}(z'_{0},x^{d})D_{\beta}\bar{u}$. Recalling that
\begin{equation*}
D_{x'}\bar{u}=D_{x'}u,\quad
U=A^{d\beta}(t,x)D_{\beta}u-g_{d}(t,x),\quad \text{and}\quad \bar{U}=\bar{A}^{d\beta}(z'_{0},x^{d})D_{\beta}u-\bar{g}_{d}(z'_{0},x^{d}),
\end{equation*}
we have for $z\in Q_{r}^{-}(z_{0})$,
\begin{align*}
|U-\bar{U}|\leq \|Du\|_{L_{\infty}(Q_{r}^{-}(z_{0}))}|A(z)-\bar{A}(z'_{0},x^{d})|+|g_{d}(z)-\bar{g}_{d}(z'_{0},x^{d})|.
\end{align*}
Thus, substituting \eqref{est A} and \eqref{holder v bar} into \eqref{iteration u bar}, we have
\begin{align}\label{formula D'u U}
&\left(\fint_{Q_{\kappa r}^{-}(z_{0})}\big|(D_{x'}u,U)-\big((D_{x'}w)_{Q_{\kappa r}^{-}(z_{0})},(W)_{Q_{\kappa r}^{-}(z_{0})}\big)\big|^{q}\ dx\ dt\right)^{1/q}\nonumber\\
&\leq N_{0}\kappa\left(\fint_{Q_{r}^{-}(z_{0})}|(D_{x'}u,U)-\mathbf{q}|^{q}\ dx\ dt\right)^{1/q}+N\kappa^{-(d+2)/q}
\left(\fint_{Q_{r}^{-}(z_{0})}|U-\bar{U}|^{q}\ dx\ dt\right)^{1/q}\nonumber\\
&\quad+N\kappa^{-(d+2)/q}
\left(\fint_{Q_{r/2}^{-}(z_{0})}|D_{x'}v|^{q}+|V|^{q}\ dx\ dt\right)^{1/q}\nonumber\\
&\leq N_{0}\kappa\left(\fint_{Q_{r}^{-}(z_{0})}|(D_{x'}u,U)-\mathbf{q}|^{q}\ dx\ dt\right)^{1/q}+N\kappa^{-(d+2)/q}\Big(\|Du\|_{L_{\infty}(Q_{r}^{-}(z_{0}))}\nonumber\\
&\quad\cdot\fint_{Q_{r}^{-}(z_{0})}|A(z)-\bar{A}(z'_{0},x^{d})|\ dx\ dt+\fint_{Q_{r}^{-}(z_{0})}|g_{d}(z)-\bar{g}_{d}(z'_{0},x^{d})|\ dx\ dt\Big)\nonumber\\
&\quad+N\kappa^{-(d+2)/q}
\left(\fint_{Q_{r/2}^{-}(z_{0})}|D_{x'}v|^{q}+|V|^{q}\ dx\ dt\right)^{1/q}\nonumber\\
&\leq N_{0}\kappa\left(\fint_{Q_{r}^{-}(z_{0})}|(D_{x'}u,U)-\mathbf{q}|^{q}\ dx\ dt\right)^{1/q}+N\kappa^{-(d+2)/q}\Big(\|Du\|_{L_{\infty}(Q_{r}^{-}(z_{0}))}\bar\omega_{A}(r)+\bar\omega_{g}(r)\Big).
\end{align}
Since $\mathbf{q}\in\mathbb R^{n\times d}$ is arbitrary, we obtain
\begin{align*}
\phi(z_{0},\kappa r)\leq N_{0}\kappa\phi(z_{0},r)+N\kappa^{-(d+2)/q}\Big(\|Du\|_{L_{\infty}(Q_{r}^{-}(z_{0}))}\bar\omega_{A}(r)+\bar\omega_{g}(r)\Big).
\end{align*}
For any given $\gamma\in(0,1)$, fix a $\kappa\in(0,1/2)$ sufficiently small so that $N_{0}\kappa\leq\kappa^{\gamma}$. We henceforth have
\begin{align*}
\phi(z_{0},\kappa r)\leq \kappa^{\gamma}\phi(z_{0},r)+N\Big(\|Du\|_{L_{\infty}(Q_{r}^{-}(z_{0}))}\bar\omega_{A}(r)+\bar\omega_{g}(r)\Big).
\end{align*}
By iteration and $\kappa^{\gamma}<1$, we obtain for $j=1,2,\ldots$,
\begin{align*}
\phi(z_{0},\kappa^{j}r)
&\leq\kappa^{j\gamma}\phi(z_{0},r)\\
&\quad+N\left(\|Du\|_{L^{\infty}(Q_{r}^{-}(z_{0}))}\sum_{i=1}^{j}\kappa^{(i-1)\gamma}\bar\omega_{A}(\kappa^{j-i}r)+\sum_{i=1}^{j}\kappa^{(i-1)\gamma}\bar\omega_{g}(\kappa^{j-i}r)\right).
\end{align*}
This gives \eqref{iteration phi}. The lemma is proved.
\end{proof}

Once we get Lemma \ref{lemma itera}, we can obtain the local boundedness of $Du$ in Lemma \ref{lem3.4} below. The proof of it is the same as that of \cite[Lemma 3.4]{dx} and thus omitted.
\begin{lemma}\label{lem3.4}
We have
\begin{align}\label{est Du''}
\|Du\|_{L_{\infty}(Q_{1/4}^{-})}\leq N\|(D_{x'}u,U)\|_{L_{1}(Q_{3/4}^{-})}+N\left(\int_{0}^{1}\frac{\tilde\omega_{g}(s)}{s}\ ds+\|g\|_{L_{\infty}(\cQ)}\right),
\end{align}
where $N>0$ is a constant depending only on $n,d,p,\nu,\gamma$, $\omega_{A}$, and $\hat\omega_{1}$.
\end{lemma}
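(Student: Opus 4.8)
The plan is to iterate the estimate of Lemma \ref{lemma itera} and convert the resulting control of the mean oscillation functional $\phi(z_0,r)$ into an $L_\infty$ bound on $Du$ via Campanato's criterion, exactly as in \cite[Lemma 3.4]{dx}. First I would fix $z_0 = (t_0,x_0) \in Q_{1/4}^{-}$ and apply \eqref{est phi'} with $r = 1/4$, keeping $\rho \in (0,1/4]$ arbitrary; this gives
\[
\phi(z_0,\rho) \le N\rho^{\gamma}\|(D_{x'}u,U)\|_{L_1(Q_{3/4}^{-})} + N\tilde\omega_A(\rho)\|Du\|_{L_\infty(Q_{1/4}^{-})} + N\tilde\omega_g(\rho),
\]
after enlarging the cylinder from $Q_{1/4}^{-}$ to $Q_{3/4}^{-}$ (allowed since $z_0$ ranges over $Q_{1/4}^{-}$, so $Q_{1/4}^{-}(z_0)\subset Q_{1/2}^{-}\subset Q_{3/4}^{-}$) — actually one should apply the lemma on a cylinder like $Q_{1/4}^{-}(z_0)$ and note $\phi(z_0,r)\le Nr^{-d-2}\|(D_{x'}u,U)\|_{L_1(Q_{3/4}^{-})}$ uniformly.

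The key point is that $\tilde\omega_A$ and $\tilde\omega_g$ are Dini functions, so $\int_0^1 \tilde\omega_A(s)/s\,ds$ and $\int_0^1\tilde\omega_g(s)/s\,ds$ are finite. Summing $\phi(z_0,\kappa^i r)$ over $i$ using Lemma \ref{lemma omiga} (the $\tilde\omega_\bullet$ satisfy the doubling-type condition \eqref{equivalence}), one controls $\sum_i \phi(z_0,\kappa^i r)$ — and hence, by Campanato's characterization, $|(D_{x'}u,U)(z_0)|$ up to the average over $Q_{3/4}^{-}$ — by
\[
N\|(D_{x'}u,U)\|_{L_1(Q_{3/4}^{-})} + N\Big(\int_0^1 \tfrac{\tilde\omega_g(s)}{s}\,ds + \|g\|_{L_\infty}\Big) + N\Big(\int_0^1\tfrac{\tilde\omega_A(s)}{s}\,ds\Big)\|Du\|_{L_\infty(Q_{1/4}^{-})}.
\]
Since $D_{x'}u$ is one block of $Du$ and $U = A^{d\beta}D_\beta u - g_d$ recovers $D_d u$ using ellipticity $\bar A^{dd}\ge\nu$ (so $|Du|\le N(|D_{x'}u| + |U| + |g|)$ pointwise), taking the supremum over $z_0 \in Q_{1/4}^{-}$ yields
\[
\|Du\|_{L_\infty(Q_{1/4}^{-})} \le N_1\Big(\int_0^1\tfrac{\tilde\omega_A(s)}{s}\,ds\Big)\|Du\|_{L_\infty(Q_{1/4}^{-})} + N\|(D_{x'}u,U)\|_{L_1(Q_{3/4}^{-})} + N\Big(\int_0^1\tfrac{\tilde\omega_g(s)}{s}\,ds + \|g\|_{L_\infty}\Big).
\]

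The main obstacle — and the place where one must be slightly careful — is absorbing the term $N_1(\int_0^1 \tilde\omega_A(s)/s\,ds)\|Du\|_{L_\infty}$ into the left-hand side: the coefficient $\int_0^1 \tilde\omega_A(s)/s\,ds$ need not be small. This is handled by the standard rescaling/covering trick: one runs the argument on cylinders $Q_{\rho_0}^{-}(z_0)$ with $\rho_0$ chosen small enough that $\int_0^{\rho_0}\tilde\omega_A(s)/s\,ds \le 1/(2N_1)$, absorbs, and then covers $Q_{1/4}^{-}$ by finitely many such cylinders (with the number depending on $\omega_A$ and $\hat\omega_1$, hence the stated dependence of $N$), paying a harmless factor in passing from local to the fixed cylinder $Q_{3/4}^{-}$. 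Since this is verbatim the argument of \cite[Lemma 3.4]{dx} with parabolic cylinders in place of balls, I would simply invoke it; the a priori assumption $u\in C^{0,1}(Q_{3/4}^{-})$ guarantees $\|Du\|_{L_\infty(Q_{1/4}^{-})}<\infty$ so the absorption is legitimate.
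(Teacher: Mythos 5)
Your overall route is the paper's route: the paper proves Lemma \ref{lem3.4} exactly by iterating Lemma \ref{lemma itera} (i.e.\ \eqref{iteration phi}), summing the resulting Dini series via Lemma \ref{lemma omiga} to obtain the pointwise bound \eqref{est Du q} at Lebesgue points of $(D_{x'}u,U)$, recovering $D_du$ from $U$ through $A^{dd}\ge\nu$, and then absorbing the $\tilde\omega_A$-term; indeed the paper omits the details and cites \cite[Lemma 3.4]{dx}, which is also what you ultimately do. Up to the absorption step your sketch is correct, modulo the point (symptomatic of the real issue below) that with $z_0\in Q^-_{1/4}$ and $r=1/4$ the supremum of $|Du|$ appearing on the right must be taken over $Q^-_{1/2}$, not $Q^-_{1/4}$.

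The absorption, however, is not closed by the mechanism you describe. Writing $C_*:=\|(D_{x'}u,U)\|_{L_1(Q^-_{3/4})}+\int_0^1\tilde\omega_g(s)s^{-1}\,ds+\|g\|_{L_\infty(\cQ)}$, with a fixed small $\rho_0$ your estimate reads $|Du(z_0)|\le\tfrac12\|Du\|_{L_\infty(Q^-_{\rho_0}(z_0))}+N(\rho_0)C_*$, and $Q^-_{\rho_0}(z_0)$ always sticks out of whatever cylinder $z_0$ ranges over; taking suprema only yields $\|Du\|_{L_\infty(Q^-_{\rho})}\le\tfrac12\|Du\|_{L_\infty(Q^-_{\rho+\rho_0})}+N(\rho_0)C_*$, and a finite covering by $\rho_0$-cylinders can only produce a finite chain of such inequalities, leaving an unabsorbed term of the form $2^{-m}\|Du\|_{L_\infty(Q^-_{1/2})}$, which is finite a priori but not controlled by the right-hand side of \eqref{est Du''}. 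The argument of \cite[Lemma 3.4]{dx}, which is what the paper invokes, closes this instead by the standard iteration over nested cylinders with varying radii: choose $r_1$ so that $N_1\int_0^{r_1}\tilde\omega_A(s)s^{-1}\,ds\le 1/2$, and for $1/4\le\rho<\rho'\le1/2$ apply the pointwise bound with $r=\min\{r_1,\rho'-\rho\}$ to get $f(\rho)\le\tfrac12 f(\rho')+N(\rho'-\rho)^{-d-2}C_*$ for $f(\rho):=\|Du\|_{L_\infty(Q^-_{\rho})}$, and then conclude $f(1/4)\le NC_*$ by the well-known lemma for such functions (see \cite{g}); the a priori finiteness of $f$ coming from the assumption $u\in C^{0,1}(Q^-_{3/4})$ enters as the hypothesis of that lemma, not through a covering. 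With this replacement your proof is complete and coincides with the paper's, including the dependence of $N$ on $\omega_A$ and $\hat\omega_1$ through $r_1$.
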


\subsection{\bf Proof of Proposition \ref{main prop}} 
\begin{proof}
We recall that for each $z_0$, the coordinate system is chosen according to it. The proof is similar to that in \cite{dx}, so we only list the main differences. We claim that for {\em a.e.} $z_{0}\in Q_{3/4}^{-}$,
\begin{align}\label{est Du q}
&|(D_{x'}u(z_{0}),U(z_{0}))-\mathbf q_{z_{0},r}|\nonumber\\
&\leq N\left(\phi(z_{0},r)+\|Du\|_{L_{\infty}(Q_{r}^{-}(z_{0}))}\int_{0}^{r}\frac{\tilde\omega_{A}(s)}{s}\ ds+\int_{0}^{r}\frac{\tilde\omega_{g}(s)}{s}\ ds\right),
\end{align}
where $\mathbf q_{z_{0},r}\in\mathbb R^{n\times d}$ satisfying
$$
\phi(z_{0},r)=\left(\fint_{Q_{r}^{-}(z_{0})}|(D_{x'}u,U)-\mathbf q_{z_{0},r}|^{q}\ dx\ dt\right)^{1/q}.
$$
Note that \eqref{est Du q} is similar to \cite[(3.16)]{dx}. One can prove it by iteration, \eqref{iteration phi}, the assumption that $u\in C^{0,1}(Q_{3/4}^{-})$, and Lemma \ref{lemma omiga}. Then for $0<r<1/8$,
\begin{align*}
&\sup_{z_{0}\in Q_{1/8}^{-}}|(D_{x'}u(z_{0}),U(z_{0}))-\mathbf q_{z_{0},r}|\\
&\leq N\sup_{z_{0}\in Q_{1/8}^{-}}\phi(z_{0},r)+N\|Du\|_{L_{\infty}(Q_{1/4}^{-})}\int_{0}^{r}\frac{\tilde\omega_{A}(s)}{s}\ ds+N\int_{0}^{r}\frac{\tilde\omega_{g}(s)}{s}\ ds\\
&=:N\psi(r),
\end{align*}
where, it follows from Lemma \ref{lemma itera} that for any $0<r<1/8$,
\begin{align}\label{sup phi}
\sup_{z_{0}\in Q_{1/8}^{-}}\phi(z_{0},r)\leq N\left(r^{\gamma}\|(D_{x'}u,U)\|_{L_{1}(Q_{1/4}^{-})}+\tilde{\omega}_{A}(r)\|Du\|_{L_{\infty}(Q_{1/4}^{-})}+\tilde{\omega}_{g}(r)\right).
\end{align}

Now suppose that $z_{1}=(t_1,x_1)\in Q_{1/8}^{-}\cap \cQ_{j_{1}}$ for some $j_{1}\in[1,l+1]$. If $|z_{0}-z_{1}|_{p}\geq 1/32$, then by $$|(D_{x'}u(z_{0}),U(z_{0}))-(D_{x'}u(z_{1}),U(z_{1}))|\leq2\big(\|Du\|_{L_{\infty}(Q_{1/4}^{-})}+\|g\|_{L_{\infty}(\cQ)}\big)$$
and \eqref{est Du''}, we have
\begin{align}\label{C1 est2}
&|(D_{x'}u(z_{0}),U(z_{0}))-(D_{x'}u(z_{1}),U(z_{1}))|\nonumber\\
&\leq
N|z_{0}-z_{1}|_{p}^{\gamma}\left( \|(D_{x'}u,U)\|_{L_{1}(Q_{3/4}^{-})}+\int_{0}^{1}\frac{\tilde\omega_{g}(s)}{s}\ ds+\|g\|_{L_{\infty}(\cQ)}\right),
\end{align}
where $\gamma\in(0,1)$ is a constant. If $|z_{0}-z_{1}|_{p}<1/32$, we set $r=|z_{0}-z_{1}|_{p}$ and  claim that $\mbox{dist}(z_{0},\partial_{p}\cQ_{j_{0}}\cap\{t=t_{0}\})$ and $\mbox{dist}(z_{0},\partial_{p}\cQ_{j_{0}})$ are comparable. Indeed, on one hand, clearly
$$\mbox{dist}(z_{0},\partial_{p}\cQ_{j_{0}}\cap\{t=t_{0}\})
\geq\mbox{dist}(z_{0},\partial_{p}\cQ_{j_{0}}).$$
On the other hand, we may suppose that
$$\mbox{dist}(z_{0},\partial_{p}\cQ_{j_{0}}\cap\{t=t_{0}\})=|z_{0}-(t_{0},x'_{0},h_{j_{0}}(t_{0},x'_{0}))|_{p}$$
and
$$
\mbox{dist}(z_{0},\partial_{p}\cQ_{j_{0}})=|z_{0}-(t,x',h_{j_{0}}(t,x'))|_{p}.
$$
Then by using the triangle inequality and $h_{j_{0}}\in C^{\gamma_{0}}$ with $\gamma_{0}>1/2$, we have
\begin{align*}
&|z_{0}-(t_{0},x'_{0},h_{j_{0}}(t_{0},x'_{0}))|_{p}\\
&\leq |z_{0}-(t,x',h_{j_{0}}(t,x'))|_{p}+|(t-t_0,x'-x'_0,h_{j_{0}}(t,x')-h_{j_{0}}(t_{0},x'_{0}))|_{p}\\
&\leq |z_{0}-(t,x',h_{j_{0}}(t,x'))|_{p}+N(|t-t_{0}|^{1/2}+|x'-x'_0|+|t-t_{0}|^{\gamma_{0}})\\
&\leq N\mbox{dist}(z_{0},\partial_{p}\cQ_{j_{0}}).
\end{align*}
Now we continue the proof by discussing the following two cases.

{\bf Case 1.} If
$$
r>1/16\max\{\mbox{dist}(z_{0},\partial_{p}\cQ_{j_{0}}\cap\{t=t_{0}\}),
\mbox{dist}(z_{1},\partial_{p}\cQ_{j_{1}}\cap\{t=t_{1}\})\},
$$
then without loss of generality, we assume that $z_{0}$ is above $z_{1}$. By the triangle inequality, we have for $\forall~z\in Q_{r}^{-}(z_{1})$,
\begin{equation}
\begin{split}
\label{case2}
&|(D_{x'}u(z_{0}),U(z_{0}))-(D_{x'}u(z_{1}),U(z_{1}))|^{q}\\
&\leq|(D_{x'}u(z_{0}),U(z_{0}))-\mathbf q_{z_{0},2r}|^{q}+|\mathbf q_{z_{0},2r}-\mathbf q_{z_{1},2r}|^{q}+|(D_{y'}u(z_{1}),\tilde U(z_{1}))-\mathbf q_{z_{1},2r}|^{q}\\
&\quad+|(D_{y'}u(z_{1}),\tilde U(z_{1}))-(D_{x'}u(z_{1}),U(z_{1}))|^{q}\\
&\leq N\psi^{q}(2r)+|(D_{x'}u(z),U(z))-\mathbf q_{z_{0},2r}|^{q}+|(D_{y'}u(z),\tilde U(z))-\mathbf q_{z_{1},2r}|^{q}\\
&\quad+|(D_{y'}u(z),\tilde U(z))-(D_{x'}u(z),U(z))|^{q}+|(D_{y'}u(z_{1}),\tilde U(z_{1}))-(D_{x'}u(z_{1}),U(z_{1}))|^{q},
\end{split}
\end{equation}
where $D_{y'}$ denotes the first derivatives with respect to the first $d-1$ space variables in the coordinate system associated with $z_1$ and in this coordinate system, we use $D_{y}$ to define $\tilde U$. For the last term, one can see that $$
D_{x'}u(z_{1})-D_{y'}u(z_{1})=(D_{x'}u(z_{1}),D_{x^{d}}u(z_{1}))(I-X^{-1})I_{0},
$$ where $I_{0}=(I^{\alpha\beta})$ is a $d\times (d-1)$ matrix with
\begin{equation*}
I^{\alpha\beta}=\delta_{\alpha\beta}\ \ \mbox{for}~\alpha,\beta\in\{1,\dots,d-1\};\quad I^{d\beta}=0\ \ \mbox{for}~\beta\in\{1,\dots,d-1\},
\end{equation*}
$X=(X^{\alpha\beta})$ is a $d\times d$ matrix with
\begin{equation*}
X^{\alpha\beta}=\frac{\partial y^{\alpha}}{\partial x^{\beta}} \,\,~\mbox{for} ~\alpha,\beta=1,\dots,d,
\end{equation*}
and $I$ is a $d\times d$ identity matrix. We henceforth need to estimate $I-X^{-1}$. To end this, we suppose that for the fixed $t_1$, the closest point on $\partial_{p}Q_{j_{1}}\cap\{t=t_{1}\}$ to $z_{1}=(t_{1},x'_{1},x^d)$ is $(z'_{1},h_{j_{1}}(z'_{1}))$, and let $$n_{2}=\frac{\big(-\nabla_{x'}h_{j_{1}}(z'_{1}),1\big)^{\top}}{\sqrt{1+|\nabla_{x'}h_{j_{1}}(z'_{1})|^{2}}}$$
be the unit normal vector at $(z'_{1},h_{j_{1}}(z'_{1}))$ on the surface $\{(t_1,x',x^d): x^d=h_{j_{1}}(t_{1},x')\}$. The corresponding tangential vectors are given by
\begin{align*}
\tau_{2,1}=(1,0,\ldots,0,D_{x^{1}}h_{j_{1}}(z'_{1}))^{\top},\dots,
\tau_{2,d-1}=(0,0,\ldots,1,D_{x^{d-1}}h_{j_{1}}(z'_{1}))^{\top},
\end{align*}
from which we can use the Gram-Schmidt process to find an orthonormal basis $\{\hat\tau_{2,1},\ldots,\hat\tau_{2,d-1}\}$ of the tangent space.
Similarly, we denote
$$
n_{1}=\frac{\big(-\nabla_{x'}h_{j_{0}}(z'_{0}),1\big)^{\top}}
{\sqrt{1+|\nabla_{x'}h_{j_{0}}(z'_{0})|^{2}}}=(0',1)^{\top}
$$
to be the unit normal vector at $(z'_{0},h_{j_{0}}(z'_{0}))$, and the corresponding tangential vectors are
\begin{align*}
\tau_{1,1}=(1,0,\ldots,0)^{\top},
\ldots,\tau_{1,d-1}=(0,0,\ldots,1,0)^{\top}.
\end{align*}
It follows from the proof of Lemma \ref{volume} that  $|\nabla_{x'}h_{j_1}(z')|$ is bounded from above by $N\omega_1(r)$. Then we have
\begin{align*}
|n_{1}-n_{2}|&=\left|(0',1)^{\top}-\frac{\big(-\nabla_{x'}h_{j_{1}}(z'_{1}),1\big)^{\top}}{\sqrt{1+|\nabla_{x'}h_{j_{1}}(z'_{1})|^{2}}}\right|\\
&\leq N\omega_1(N_0|z_{0}-z_{1}|_{p})\leq N\omega_1(|z_{0}-z_{1}|_{p})\leq N\tilde\omega_1(|z_{0}-z_{1}|_{p}),
\end{align*}
where we used $\omega_{1}(N_{0}r)\leq N_{0}\omega_{1}(r)$ in the second inequality, which can be derived from the fact that $\omega_{0}$ is an increasing and concave function, $R$ is a monotonically increasing function with respect to $r$, and the definition of $\omega_1(r)=2\omega_0(2r+R)$ in the proof of Lemma \ref{volume}. This is also true for $|\tau_{1,i}-\tilde{\tau}_{2,i}|, i=1,\ldots,d-1$. We thus obtain
\begin{align*}
|D_{x'}u(z_{1})-D_{y'}u(z_{1})|\leq N\|Du\|_{L_{\infty}(Q_{1/4}^{-})}\tilde\omega_1(|z_{0}-z_{1}|_{p}).
\end{align*}
We similarly can estimate the difference of $U$ in different coordinate systems. Hence, we obtain
\begin{align}\label{diffe coor}
|(D_{x'}u(z_{1}),U(z_{1}))-(D_{y'}u(z_{1}),\tilde U(z_{1}))|\leq N\|Du\|_{L_{\infty}(Q_{1/4}^{-})}\tilde\omega_1(|z_{0}-z_{1}|_{p}).
\end{align}
Also, \eqref{diffe coor} is satisfied by the penultimate term of \eqref{case2}. Coming back to \eqref{case2}, we take the average over $z\in Q_{r}^{-}(z_{1})$ and take the $q$-th root to get
\begin{align*}
&|(D_{x'}u(z_{0}),U(z_{0}))-(D_{x'}u(z_{1}),U(z_{1}))|\nonumber\\
&\leq N\Big(\psi(2r)+\phi(z_{0},2r)+\phi(z_{1},2r)+\|Du\|_{L_{\infty}(Q_{1/4}^{-})}\tilde\omega_1(|z_{0}-z_{1}|_{p})\Big)\nonumber\\
&\leq N\Big(\psi(2r)+\|Du\|_{L_{\infty}(Q_{1/4}^{-})}\tilde\omega_1(|z_{0}-z_{1}|_{p})\Big).
\end{align*}
Therefore, it follows from \eqref{est Du''},  \eqref{sup phi}, and \eqref{equivalence} that
\begin{align}\label{C1 est1}
&|(D_{x'}u(z_{0}),U(z_{0}))-(D_{x'}u(z_{1}),U(z_{1}))|\nonumber\\
&\leq N|z_{0}-z_{1}|_{p}^{\gamma}\|(D_{x'}u,U)\|_{L_{1}(Q_{3/4}^{-})}+N\int_{0}^{|z_{0}-z_{1}|_{p}}\frac{\tilde{\omega}_{g}(s)}{s}\ ds\nonumber\\
&\quad+N\int_{0}^{|z_{0}-z_{1}|_{p}}\frac{\tilde{\omega}_{A}(s)}{s}\ ds\cdot\left(\|(D_{x'}u,U)\|_{L_{1}(Q_{3/4}^{-})}+\int_{0}^{1}\frac{\tilde{\omega}_{g}(s)}{s}\ ds+\|g\|_{L_{\infty}(\cQ)}\right).
\end{align}

{\bf Case 2.} If
$$
r\leq 1/16\max\{\mbox{dist}(z_{0},\partial_{p}\cQ_{j_{0}}\cap\{t=t_{0}\}),
\mbox{dist}(z_{1},\partial_{p}\cQ_{j_{1}}\cap\{t=t_{1}\})\},
$$
then $j_{0}=j_{1}$. Then we follow the same arguments as in \cite[Case 1.]{dx} to obtain
\begin{align}\label{C1 est}
&|(D_{x'}u(z_{0}),U(z_{0}))-(D_{x'}u(z_{1}),U(z_{1}))|\nonumber\\
&\leq N|z_{0}-z_{1}|_{p}^{\gamma}\|(D_{x'}u,U)\|_{L_{1}(Q_{3/4}^{-})}+N\int_{0}^{|z_{0}-z_{1}|_{p}}\frac{\tilde{\omega}_{g}(s)}{s}\ ds\nonumber\\
&\quad+N\int_{0}^{|z_{0}-z_{1}|_{p}}\frac{\tilde{\omega}_{A}(s)}{s}\ ds
\left(\|(D_{x'}u,U)\|_{L_{1}(Q_{3/4}^{-})}+\int_{0}^{1}\frac{\tilde{\omega}_{g}(s)}{s}\ ds+\|g\|_{L_{\infty}(\cQ)}\right).
\end{align}
Thus, Proposition \ref{main prop} is proved.
\end{proof}

\subsection{Proof of Corollary \ref{coro loc}}\label{sec coro loc}
The proof is a modification of  \cite[Corollary 1.6]{dx}, which in turn is based on the approach in \cite{a,b}. By the Sobolev embedding theorem in the parabolic setting (see, for instance, \cite[Lemma 8.1]{k}), we have $u\in L_{\frac{d+2}{d+1}}(\cQ)$. Fix some $p\in(1,\frac{d+2}{d+1})$ such that $d+2<p'<\infty$, where $p'=p/(p-1)$, we next prove that $Du\in L_{p,\text{loc}}(\cQ)$. Let $h\in C_{c}^{\infty}(\cQ)$ and $v\in \mathcal{H}_{2}^{1}(\cQ)$ be the solution of
\begin{align}\label{prob adj v}
\begin{cases}
\mathcal{P}^{*}v=\Div h&\quad \mbox{in}~\cQ\\
v=0&\quad \mbox{on}~((-T,0]\times\partial\cD)\cup(\{0\}\times\overline{\cD}),
\end{cases}
\end{align}
where $\mathcal{P}^{*}$ is the adjoint operator of $\mathcal{P}$ defined by
$$\mathcal{P}^{*}v:=v_{t}+D_{\beta}
((A^{\alpha\beta})^\top D_{\alpha}v)-D_{\alpha}((\hat{B}^{\alpha})^\top v)-(B^{\alpha})^\top D_{\alpha}v+C^\top v.$$ Then by Theorem \ref{thm1}, we obtain $Dv\in L_{\infty}((-T+\varepsilon,0)\times\cD_{\varepsilon})$. By the $\mathcal{H}_{2}^{1}$-estimate and $p'>2$, we have
\begin{align}\label{bound v}
\|v\|_{\mathcal{H}_{2}^{1}(\cQ)}\leq N\|h\|_{L_{2}(\cQ)}\leq N\|h\|_{L_{p'}(\cQ)}.
\end{align}
By Lemma \ref{lem loc lq} and \eqref{bound v}, we have
\begin{align*}
\|v\|_{\mathcal{H}_{p'}^{1}((-T+\varepsilon,0)\times\cD_{\varepsilon})}\leq N\big(\|h\|_{L_{p'}(\cQ)}+\|v\|_{L_{2}(\cQ)}\big)\leq N\|h\|_{L_{p'}(\cQ)}.
\end{align*}
This together with Sobolev-Morrey theorem and $p'>d+2$ implies that
\begin{equation*}
\|v\|_{L_{\infty}((-T+\varepsilon,0)\times\cD_{\varepsilon})}\leq N\|h\|_{L_{p'}(\cQ)}.
\end{equation*}
Fix $\zeta\in C_{c}^{\infty}((-T+\varepsilon,0)\times\cD_{\varepsilon})$ with $\zeta\equiv1$ on $\cQ'\subset\subset (-T+\varepsilon,0)\times\cD_{\varepsilon}$. Then we use $\zeta u$ as a test function to \eqref{prob adj v} and obtain
\begin{align}\label{weak  v1}
&\int_{\cQ}-v_{t}u\zeta+(A^{\alpha\beta})^\top D_{\alpha}v\left(\zeta D_{\beta}u+uD_{\beta}\zeta\right)+(B^{\alpha})^\top D_{\alpha}vu\zeta\nonumber\\
&\qquad
-(\hat{B}^{\alpha})^\top v\left(\zeta D_{\alpha}u+uD_{\alpha}\zeta\right)-C^\top vu\zeta=\int_{\cQ}h_{\alpha}D_{\alpha}(u\zeta).
\end{align}
On the other hand, recalling that $u\in\mathcal{H}_{1}^{1}(\cQ)$ is a weak solution of \eqref{systems}, we choose $\zeta v$ as a test function and get
\begin{align}\label{weak  u'}
&\int_{\cQ}u_{t}\zeta v+A^{\alpha\beta}D_{\beta}u\left(\zeta D_{\alpha}v+vD_{\alpha}\zeta\right)+B^{\alpha}u\left(\zeta D_{\alpha}v+vD_{\alpha}\zeta\right)-\hat{B}^{\alpha}D_{\alpha}uv\zeta-Cuv\zeta\nonumber\\
&=\int_{\cQ}g_{\alpha}\left(\zeta D_{\alpha}v+vD_{\alpha}\zeta\right)-f\zeta v.
\end{align}
Combining \eqref{weak  v1} and \eqref{weak  u'}, we obtain
\begin{align*}
\int_{\cQ}h_{\alpha}D_{\alpha}(u\zeta)
&=\int_{\cQ}uv\zeta_{t}-\int_{\cQ}A^{\alpha\beta}vD_{\beta}uD_{\alpha}\zeta
+\int_{\cD}(A^{\alpha\beta})^\top uD_{\alpha}vD_{\beta}\zeta-uvB^{\alpha}D_{\alpha}\zeta\nonumber\\
&\quad-(\hat{B}^{\alpha})^\top uv D_{\alpha}\zeta
+\int_{\cQ}g_{\alpha}\left(\zeta D_{\alpha}v+vD_{\alpha}\zeta\right)-f\zeta v,
\end{align*}
which is similar to \cite[(4.8)]{dx}.
Then by replicating the argument in the proof of \cite[Corollary 1.6]{dx}, we have
\begin{align*}
\left|\int_{\cQ}h_{\alpha}D_{\alpha}(u\zeta)\right|\leq N\left(\|g\|_{L_{\infty}(\cQ)}+\|f\|_{L_{\infty}(\cQ)}+\|u\|_{\mathcal{H}_{1}^{1}(\cQ)}\right)\|h\|_{L_{p'}(\cQ)}
\end{align*}
for all $h\in C_{c}^{\infty}(\cQ)$. Hence, $u\in \mathcal{H}_{p}^{1}(\cQ')$ 
and
\begin{align*}
\|u\|_{\mathcal{H}_{p}^{1}(\cQ')}\leq N\left(\|g\|_{L_{\infty}(\cQ)}+\|f\|_{L_{\infty}(\cQ)}+\|u\|_{\mathcal{H}_{1}^{1}(\cQ)}\right).
\end{align*}
The corollary is proved.

\section{Proof of Theorem \ref{thm holder}}\label{section thm holder}
\subsection{The continuity of \texorpdfstring{$D_{x'}u$}{D'u} and \texorpdfstring{$U$}{U}}

We first prove \eqref{Lip Du}. Similar to the proof of Theorem \ref{thm1}, we take $z_{0}\in Q_{3/4}^{-}\cap \cQ_{j_{0}}$. Let $A^{(j)}\in C^{\delta/2,\delta}(\overline{\cQ}_{j})$, $1\leq j\leq l+1$, be matrix-valued functions, and $B^{(j)}, g^{(j)}$ be in $C^{\delta/2,\delta}(\overline{\cQ}_{j})$. Define the piecewise constant (matrix-valued) functions
\begin{align*}
\bar{A}(z)=
A^{(j)}(z_{0}),\ \ z\in\Omega_{j_0},\quad
\bar{A}(z)=
A^{(j)}(z'_{0},h_{j}(z'_{0})),\ \ z\in\Omega_{j},\ \ j\neq j_0.
\end{align*}
From $B^{(j)}$ and $g^{(j)}$, we similarly define piecewise constant functions $\bar{B}$ and $\bar{g}$. Notice that these functions only depend on the center $z_0$, but are independent of the radius of the cylinder $r$. Using Lemma \ref{volume}, we immediately get the following result.
\begin{lemma}\label{difference holder}
Let $A, \bar{A}, B, \bar{B}, g$, and $\bar{g}$ be defined as above, there exists a positive constant $N$, depending only on $d,l,\mu,\delta,\nu,\Lambda$, $\max_{1\leq j\leq l+1}\|A\|_{C^{\delta/2,\delta}(\overline{Q}_{j})}$, $\max_{1\leq j\leq l+1}\|B\|_{C^{\delta/2,\delta}(\overline{Q}_{j})}$, $\max_{1\leq j\leq l+1}\|g\|_{C^{\delta/2,\delta}(\overline{Q}_{j})}$ and $\max_{1\leq j\leq l+1}\|h_{j}\|_{C^{1,\mu}(\overline{D}_{j})}$, such that for $0<r\leq 1$,
\begin{align*}
\fint_{Q_{r}^{-}(z_{0})}|A-\bar{A}|\ dx\ dt+\fint_{Q_{r}^{-}(z_{0})}|B-\bar{B}|\ dx\ dt
+\fint_{Q_{r}^{-}(z_{0})}|g-\bar{g}|\ dx\ dt\leq Nr^{\delta'},
\end{align*}
where $\delta'=\min\{\delta,2\gamma_{0}-1\}$.
\end{lemma}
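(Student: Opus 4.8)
The plan is to bound each of the three averages $\fint_{Q_r^-(z_0)}|A-\bar A|$, $\fint_{Q_r^-(z_0)}|B-\bar B|$, and $\fint_{Q_r^-(z_0)}|g-\bar g|$ separately by $Nr^{\delta'}$; since the three terms are handled identically, I will only write out the argument for $A$. The key geometric input is Lemma \ref{volume}, which controls the measure of the symmetric difference $\cQ_j\Delta\Omega_j$ inside $Q_r^-(z_0)$; here, because the interfacial boundaries are $C^{1,\mu}$ in $x$, the Dini function $\omega_0$ may be taken to be $\omega_0(s)=Ns^\mu$, so that $\omega_1(r)\le Nr^\mu$ and consequently $|(\cQ_j\Delta\Omega_j)\cap Q_r^-(z_0)|\le Nr^{d+1+\mu}+Nr^{d+1+2\gamma_0}\le Nr^{d+1+\min\{\mu,2\gamma_0-1\}+1}$. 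Actually it is cleaner to observe directly from the proof of Lemma \ref{volume} that with a $C^{1,\mu}$ interface one gets $|(\cQ_j\Delta\Omega_j)\cap Q_r^-(z_0)|\le Nr^{d+2+\min\{\mu,2\gamma_0-1\}}=Nr^{d+2+\delta'}$, noting $\delta\le\mu/(1+\mu)\le\mu$ so that $\delta'=\min\{\delta,2\gamma_0-1\}\le\min\{\mu,2\gamma_0-1\}$.

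First I would split the cylinder $Q_r^-(z_0)$ into the parts lying in $\Omega_j$ for $j=1,\dots,l+1$, and on each such part further compare with $\cQ_j$. On the ``good'' set $\Omega_j\cap\cQ_j\cap Q_r^-(z_0)$ one has $A=A^{(j)}$ and $\bar A=A^{(j)}(\text{center})$, so that $|A-\bar A|\le Nr^{\delta'}$ by the $C^{\delta/2,\delta}$ regularity of $A^{(j)}$ together with the fact that the chosen reference point (either $z_0$ or $(z_0',h_j(z_0'))$) lies within parabolic distance $Nr$ of every point of $Q_r^-(z_0)$ — here the $C^{1,\mu}\subset C^{0,1}$ bound on $h_j$ and $\gamma_0>1/2$ are used to see that $|(t,x',h_j(t,x'))-(t_0,x_0',h_j(t_0,x_0'))|_p\le Nr$. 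On the ``bad'' set, i.e.\ $(\cQ_j\Delta\Omega_j)\cap Q_r^-(z_0)$, we only use that both $A$ and $\bar A$ are bounded by $N(\nu,\Lambda)$, so the contribution to the integral is at most $N\sum_{j}|(\cQ_j\Delta\Omega_j)\cap Q_r^-(z_0)|\le Nr^{d+2+\delta'}$. Dividing by $|Q_r^-(z_0)|\sim r^{d+2}$ yields the bound $Nr^{\delta'}$ for the $A$-term, and the same reasoning applies verbatim to $B$ and $g$.

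The main (and essentially only) obstacle is the bookkeeping in the definition of the piecewise-constant reference functions: for $z$ in the ``central'' strip $\Omega_{j_0}$ one uses the value $A^{(j_0)}(z_0)$, whereas for $z$ in a neighboring strip $\Omega_j$ with $j\neq j_0$ one uses $A^{(j)}(z_0',h_j(z_0'))$, and one must check that in each case the reference point is parabolically $Nr$-close to all of $Q_r^-(z_0)$ and that the Hölder estimate is being applied within the closure of the correct subdomain $\overline{\cQ}_j$ where $A^{(j)}\in C^{\delta/2,\delta}$. This requires invoking the $C^{1,\mu}$-in-$x$ and $C^{\gamma_0}$-in-$t$ structure of the $h_j$ exactly as in the proof of Lemma \ref{volume} — in particular the slope bound $|\nabla_{x'}h_j|\le Nr^\mu$ near $x_0'$, which guarantees that the graph of $h_j$ over $B_r'(x_0')$ stays within a slab of height $Nr^{1+\mu}+Nr^{2\gamma_0}=o(r)$ around the hyperplane $x^d=h_j(z_0')$. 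Once the set decomposition and the closeness estimates are in place, everything reduces to the elementary measure bound above, so I expect the proof to be short, with the $C^{\delta/2,\delta}$-oscillation on the good set and the measure bound from Lemma \ref{volume} on the bad set combining directly to give the claimed $Nr^{\delta'}$.
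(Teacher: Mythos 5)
Your overall architecture---split $Q_r^-(z_0)$ into the strips $\Omega_j$, use the piecewise $C^{\delta/2,\delta}$ regularity on the good sets $\Omega_j\cap\cQ_j$ after checking that the reference points are parabolically $Nr$-close, and use the measure bound of Lemma \ref{volume} plus boundedness on the bad sets $(\cQ_j\Delta\Omega_j)\cap Q_r^-(z_0)$---is exactly the computation the paper has in mind: the lemma is the H\"older analogue of \eqref{est A} and is derived ``immediately'' from Lemma \ref{volume}. However, your quantitative input from Lemma \ref{volume} is wrong: for $C^{1,\mu}$ interfaces one cannot take $\omega_1(r)\le Nr^{\mu}$, and the slope bound $|\nabla_{x'}h_j|\le Nr^{\mu}$ on $B'_r(x_0')$ is false. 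Only the interface $h_{j_0}$ through the nearest point satisfies $\nabla_{x'}h_{j_0}(t_0,x_0')=0$; for the other interfaces meeting $Q_r^-(z_0)$, the argument in Lemma \ref{volume} only gives $\omega_1(r)=2\omega_0(2r+R)$ with $R$ determined by $\int_0^R\omega_0'(2r+s)s\,ds=3r/2$, and for $\omega_0(s)=Ls^{\mu}$ this yields $R\sim r^{1/(1+\mu)}$, hence $\omega_1(r)\sim r^{\mu/(1+\mu)}$, not $r^{\mu}$. (Two interfaces tangent to order $1+\mu$, e.g. $h_{j_0}\equiv 0$ and $h_j(x')=|x'|^{1+\mu}$ with $z_0$ squeezed between them at distance comparable to $r$, show the slope over $B'_r(x_0')$ really is of size $r^{\mu/(1+\mu)}$, so your claimed bound $|(\cQ_j\Delta\Omega_j)\cap Q_r^-(z_0)|\le Nr^{d+2+\min\{\mu,2\gamma_0-1\}}$ is false in general.) This exponent $\mu/(1+\mu)$ is precisely why Theorem \ref{thm holder} assumes $\delta\le\mu/(1+\mu)$; your step ``$\delta\le\mu/(1+\mu)\le\mu$'' misses the point, and your route would equally ``prove'' the lemma for every $\delta\le\mu$, which is not true and would contradict the known sharpness of the $\mu/(1+\mu)$ threshold.

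The error is localized and easily repaired: with the correct bound $\omega_1(r)\le Nr^{\mu/(1+\mu)}$, Lemma \ref{volume} gives $|(\cQ_j\Delta\Omega_j)\cap Q_r^-(z_0)|\le Nr^{d+2+\mu/(1+\mu)}+Nr^{d+1+2\gamma_0}\le Nr^{d+2+\delta'}$, since $\delta'=\min\{\delta,2\gamma_0-1\}\le\min\{\mu/(1+\mu),\,2\gamma_0-1\}$, and then your good/bad splitting gives $Nr^{\delta'}$ exactly as in \eqref{est A}. One further caveat on the closeness check you flag as the main obstacle: for strips on the far side of $z_0$ (i.e. $\Omega_j$ with $j>j_0$) the interface of $\Omega_j$ lying within distance $Nr$ of the cylinder is $\{x^d=h_{j-1}(z_0')\}$, not $\{x^d=h_{j}(z_0')\}$, which may be order-one away; so the reference point must be understood as the interface point of $\Omega_j$ facing $z_0$ for your distance argument (and hence the $Nr^{\delta}$ oscillation bound on the good set) to go through, and this indexing should be made explicit rather than asserting $Nr$-closeness for ``either $z_0$ or $(z_0',h_j(z_0'))$'' uniformly in $j$.
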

Thus, \eqref{Lip Du} directly follows from \eqref{C1 est}, \eqref{C1 est1}, and \eqref{C1 est2} by taking $\gamma\in(\delta',1)$.

Next, we observe from \eqref{Lip Du} that for each $j=1,\ldots,M$,
$$
D_{x'}u, U\in C^{\delta'/2,\delta'}(\overline{\cQ_{j}}\cap((-T+\varepsilon,0)\times\cD_{\varepsilon})).
$$
On the other hand, since
$$D_{d}u=(A^{dd})^{-1}\left(U+g_{d}-B^{d}u-\sum_{\beta=1}^{d-1}A^{d\beta}D_{\beta}u\right),$$
we conclude that $D_{d}u\in C^{\delta'/2,\delta'}(\overline{\cQ_{j}}\cap((-T+\varepsilon,0)\times\cD_{\varepsilon}))$.

\subsection{The estimate of \texorpdfstring{$\langle u\rangle_{1+\delta'}$}{u_t}}

The proof is again based on the Campanato's method, but we work on $u$ itself instead of its first derivatives. The key point is to prove that the mean oscillation of $u$ in cylinders vanishes in the order $r^{1+\delta'}$ as the radii $r$ of cylinders go to zero. In order to derive this, as  shown in Subsection \ref{subsec prb},  we only need to treat the case without lower-order terms and the data $f$.  Then we prove a weak type-$(1,1)$ estimate for solutions to parabolic systems with coefficients are of piecewise Dini mean oscillation. Finally, we introduce a set consisting of polynomials with respect to $x$  and use it to prove an estimate of the difference between $u$ and some polynomial in the $L_{q}$-mean sense, $q\in(0,1)$.

Fix $z_{0}\in \Big((-9/16,0)\times B_{3/4}\Big)\cap \cQ_{j_{0}}$ and take $0<r<R\leq1/4$, we take the coordinate system associated with $z_0$ and follow the proof of Theorem \ref{thm1}. As in Section \ref{proof thm1}, we denote
\begin{align}\label{operator barP}
\bar{\mathcal{P}}u:=-u_{t}+D_{\alpha}(\bar{A}^{\alpha\beta}(z'_{0},x^{d})D_{\beta}u).
\end{align}
Then
\begin{align*}
\bar{\mathcal{P}}u&=\Div(g+(\bar{A}(z'_{0},x^{d})-A(z))Du).
\end{align*}
Let $\tilde{\mathcal{P}\ }$ be the modified operator corresponding to $\bar{\mathcal{P}}$ as in Section \ref{proof thm1}.
Let $v\in \mathcal{H}_{p}^{1}(Q_{r}^{-}(z_{0}))$ be a weak solution to
\begin{align}\label{equation v0000}
\begin{cases}
\bar{\mathcal{P}\ }v=\Div(g-\bar{g}+(\bar{A}(z'_{0},x^{d})-A(z))Du)&\ \mbox{in}~Q_{r}^{-}(z_{0}),\\
v=0&\ \mbox{on}~\partial_{p}Q_{r}^{-}(z_{0}),
\end{cases}
\end{align}
where $\bar{g}:=\bar{g}(z'_{0},x^d)$ is the piecewise constant function corresponding to $g$ defined in Subsection \ref{subsection domain}. Next we give two lemmas which are the key ingredients of the proof of the estimate of $\langle u\rangle_{1+\delta'}$.

\begin{lemma}[Weak type-$(1,1)$ estimate]
            \label{weak est v}
Let $R\in (0,1/4)$ and $p\in(1,\infty)$. Let $v\in \mathcal{H}_{p}^{1}(Q_{R}^{-}(z_{0}))$ be a weak solution to the problem
\begin{align*}
\begin{cases}
\tilde{\mathcal{P}\ }v=\Div(F\chi_{Q_{R/2}^{-}(z_{0})})&\ \mbox{in}~Q_{R}^{-}(z_{0}),\\
v=0&\ \mbox{on}~\partial_p Q^-_{R}(z_{0}),\\
\end{cases}
\end{align*}
where $F\in L_{p}(Q_{R/2}^{-}(z_{0}))$. Then for any $s>0$, we have
\begin{align*}
|\{z\in Q_{R/2}^{-}(z_{0}): |v(z)|>s\}|\leq\frac{NR}{s}\|F\|_{L_{1}(Q_{R/2}^{-}(z_{0}))},
\end{align*}
where $N=N(n,d,p,\nu)$.
\end{lemma}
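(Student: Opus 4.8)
The plan is to repeat the argument of Lemma~\ref{weak est barv} with two modifications: the quantity estimated is $v$ itself rather than $Dv$, and the auxiliary adjoint equation is driven by a genuine function rather than by a divergence term. The gain of one power of $R$ in the conclusion will come out of the parabolic rescaling of $v$.

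First I would reduce to the normalized case $z_{0}=0$, $R=1$. Setting $w(t,x):=R^{-1}v(R^{2}t,Rx)$, one checks that $w\in\mathcal{H}_{p}^{1}(Q_{1}^{-})$ solves $\tilde{\mathcal{P}\ }w=\Div(\hat F\chi_{Q_{1/2}^{-}})$ with zero data on $\partial_{p}Q_{1}^{-}$, where $\hat F(t,x):=F(R^{2}t,Rx)$ and $\|\hat F\|_{L_{1}(Q_{1/2}^{-})}=R^{-(d+2)}\|F\|_{L_{1}(Q_{R/2}^{-}(z_{0}))}$. Since $|w(t,x)|>s$ is the same as $|v(R^{2}t,Rx)|>Rs$, a change of variables shows that a normalized bound $|\{z\in Q_{1/2}^{-}:|w(z)|>s\}|\le (N/s)\|\hat F\|_{L_{1}(Q_{1/2}^{-})}$ becomes exactly $|\{z\in Q_{R/2}^{-}(z_{0}):|v(z)|>Rs\}|\le (N/s)\|F\|_{L_{1}(Q_{R/2}^{-}(z_{0}))}$, i.e.\ the assertion of the lemma with $s$ replaced by $Rs$. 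So it suffices to prove the normalized version, which carries no $R$.

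For the normalized version I would, exactly as in Lemma~\ref{weak est barv}, introduce the $d\times d$ matrix $E=(E^{\alpha\beta}(x^{d}))$ with $E^{\alpha\beta}=\delta_{\alpha\beta}$ for $\alpha,\beta\le d-1$, $E^{\alpha d}=\bar A^{d\alpha}(x^{d})$, and $E^{d\beta}=0$ for $\beta\le d-1$ (which is invertible since $\bar A^{dd}\ge\nu$), and define $S:\hat F\mapsto v$, where $v\in\mathcal{H}_{p}^{1}(Q_{1}^{-})$ solves $\tilde{\mathcal{P}\ }v=\Div(E\hat F\chi_{Q_{1/2}^{-}})$ with zero data on $\partial_{p}Q_{1}^{-}$; Lemma~\ref{solvability} shows $S$ is bounded on $L_{p}(Q_{1}^{-})$, and since $E^{-1}$ is bounded the weak-type bound for $S$ in terms of $\|\hat F\|_{L_{1}}$ will be equivalent to the one in terms of $\|F\|_{L_{1}}$. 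By Lemma~\ref{lemma weak} it remains to verify the Calder\'on--Zygmund cancellation property: for $\hat b$ of mean zero supported in $Q_{r}^{-}(\bar z)\cap Q_{1/2}^{-}$ and $v_{1}$ solving $\tilde{\mathcal{P}\ }v_{1}=\Div(E\hat b)$ with zero data, one has $\int_{Q_{1/2}^{-}\setminus Q_{cr}(\bar z)}|v_{1}|\le N\|\hat b\|_{L_{1}}$ for a suitable constant $c$ (one may take $c=24$). To this end, fix a dyadic annular region $(Q_{2R'}(\bar z)\setminus Q_{R'}(\bar z))\cap Q_{1/2}^{-}$ with $R'\ge cr$ and $h\in C_{0}^{\infty}$ supported in it, and let $v_{0}\in\mathcal{H}_{p'}^{1}$ solve the adjoint problem $\tilde{\mathcal{P}^{*}\ }v_{0}=h$ with the dual zero data, $1/p+1/p'=1$. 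Testing, using $E^{\top}Dv_{0}=(D_{x'}v_{0},V_{0})$ with $V_{0}=\bar A^{d\beta}(x^{d})D_{\beta}v_{0}$, and subtracting the value at $\bar z$ (permissible since $\int\hat b=0$), I get $\int v_{1}h=-\int_{Q_{r}^{-}(\bar z)}\hat b\cdot\big[(D_{x'}v_{0},V_{0})-(D_{x'}v_{0},V_{0})(\bar z)\big]$. Since $\bar{\mathcal{P}^{*}\ }v_{0}=0$ in $Q_{R'/12}^{-}(\bar z)$, the adjoint analogue of Lemma~\ref{lemma xn} together with a scaling, $r\le R'/24$, and the $\mathcal{H}_{p'}^{1}$-estimate yields $\|(D_{x'}v_{0},V_{0})-(D_{x'}v_{0},V_{0})(\bar z)\|_{L_{\infty}(Q_{r}^{-}(\bar z))}\le Nr(R')^{-1-(d+2)/p'}\|h\|_{L_{p'}}$. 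By duality this gives $\|v_{1}\|_{L_{p}}\le Nr(R')^{-1-(d+2)/p'}\|\hat b\|_{L_{1}}$ on that annular region, and H\"older's inequality there (its measure is $\lesssim (R')^{d+2}$) converts this into $\|v_{1}\|_{L_{1}}\le Nr(R')^{-1}\|\hat b\|_{L_{1}}$ --- the same summable rate $r/R'$ as in Lemma~\ref{weak est barv}. Taking $R'=cr,2cr,\dots,2^{N_{0}-1}cr$ and summing the resulting geometric series completes the verification, and Lemma~\ref{lemma weak} delivers the claim.

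The step I expect to be the main obstacle is the bookkeeping of exponents in this last paragraph: because we work with $v$ rather than $Dv$, the duality argument naturally controls $\|v_{1}\|_{L_{p}}$ on each annular region instead of $\|v_{1}\|_{L_{1}}$, and one must check that the H\"older conversion, using that the region has measure $\lesssim (R')^{d+2}$, exactly cancels the $(R')^{-(d+2)/p'}$ loss so that the decay $r/R'$ --- and hence summability over dyadic scales --- is restored. Once this is in place, the rescaling at the start, which costs one factor of $R$, is precisely what upgrades the estimate by a power of $R$ relative to Lemma~\ref{weak est barv}, and the rest is a routine repetition of that proof (including the verification that $B_{R'/12}(\bar y)\subset B_{2/3}$, so that $\tilde A=\bar A$ there and the interior estimate applies).
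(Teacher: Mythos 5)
Your proposal is correct and follows essentially the same route as the paper's proof: the duality argument with the adjoint problem $\tilde{\mathcal{P}^{*}\ }v_{0}=h$, the interior estimate for $(D_{x'}v_{0},V_{0})$ from the adjoint analogue of Lemma \ref{lemma xn}, the dyadic annulus summation, and the conclusion via Lemma \ref{lemma weak}. The only difference is that you carry out the rescaling to $R=1$ explicitly (which is where the extra factor of $R$ comes from), whereas the paper simply normalizes $z_{0}=0$, $R=1$ and leaves that step implicit; your exponent bookkeeping in the $L_{p}\to L_{1}$ Hölder conversion on each annulus matches the paper's.
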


\begin{proof}
As in the proof of Lemma \ref{weak est barv}, we set $z_{0}=0$, $R=1$, $\bar{A}^{\alpha\beta}(x^{d}):=\bar{A}^{\alpha\beta}(0',x^{d})$, $\bar{\mathcal{P}\ }:=\bar{\ \mathcal{P}_{0'}}$ for simplicity, and follow the same notation there.  We are going to prove that the hypothesis of Lemma \ref{lemma weak} is satisfied. Set $c=24$ and fix $\bar{z}=(\bar{t},\bar{y})\in Q_{1/2}^{-}$, $0<r<1/4$. Let $\hat{b}\in L_{p}(Q_{1}^{-})$ be supported in $Q_{r}^{-}(\bar{z})\cap Q_{1/2}^{-}$ with mean zero, $b=E\hat{b}$, and $v_{1}\in \mathcal{H}_{p}^{1}(Q_{1}^{-})$ be the unique weak solution of
\begin{align*}
\begin{cases}
\tilde{\mathcal{P}\ }v_{1}=\Div b&\ \mbox{in}~Q_{1}^{-},\\
v_{1}=0&\ \mbox{on}~\partial_p Q_{1}^-.
\end{cases}
\end{align*}
For any $R\geq cr$ such that $Q_{1/2}^{-}\setminus Q_{R}(\bar{z})\neq\emptyset$ and $h\in C_{0}^{\infty}((Q_{2R}(\bar{z})\setminus Q_{R}(\bar{z}))\cap Q_{1/2}^{-})$, let $v_{0}\in \mathcal{H}_{p'}^{1}(Q_{1}^{-})$ be a weak solution of
\begin{align*}
\begin{cases}
\tilde{\mathcal{P}^{*}\ }v_{0}=h&\ \mbox{in}~Q_{1}^{-},\\
v_{0}=0&\ \mbox{on}~((-1,0]\times\partial B_{1})\cup (\{0\}\times \overline{B_1}),
\end{cases}
\end{align*}
where ${1}/{p}+{1}/{p'}=1$ and $\tilde{\mathcal{P}^{*}\ }$ is the adjoint operator of $\tilde{\mathcal{P}\ }$ defined by
$$\tilde{\mathcal{P}^{*}\ }u:=u_{t}+D_{\beta}(\tilde{A}^{\beta\alpha}D_{\alpha}u).$$
In view of the definition of weak solutions and the assumption of $\hat b$, we have
\begin{align*}
\int_{Q_{1/2}^{-}}v_{1}h&=\int_{Q_{1/2}^{-}}Dv_{0}\cdot b=\int_{Q_{r}^{-}(\bar{z})\cap Q_{1/2}^{-}}
\left(
D_{x'}v_{0}, V_{0}
\right)\cdot\hat{b}\\
&=\int_{Q_{r}^{-}(\bar{z})\cap Q_{1/2}^{-}}
\left(
D_{x'}v_{0}-D_{x'}v_{0}(\bar{z}), V_{0}-V_{0}(\bar{z})
\right)\cdot\hat{b},
\end{align*}
where
$V_{0}=\bar{A}^{d\beta}(x^{d})D_{\beta}v_{0}$. Hence, as before we have
\begin{align}\label{dilation Dvb}
\|v_{1}\|_{L_{1}(( Q_{2R}(\bar{z})\setminus Q_{R}(\bar{z}))\cap Q_{1/2}^{-})}\leq NrR^{-1}\|\hat{b}\|_{L_{1}(Q_{r}^{-}(\bar{z})\cap Q_{1/2}^{-})},
\end{align}
where we used \eqref{est Dv0 V0}, the duality, and H\"{o}lder's inequality. Let $N_{0}$ be the smallest positive integer such that $Q_{1/2}^{-}\subset Q_{2^{N_{0}}cr}(\bar{z})$. By taking $R=cr, 2cr,\ldots,2^{N_{0}-1}cr$ in \eqref{dilation Dvb} and summarizing, we obtain
\begin{align*}
\int_{Q_{1/2}^{-}\setminus Q_{cr}(\bar{z})}|v_{1}|\ dx\ dt&\leq N\sum_{k=1}^{N_{0}}2^{-k}\|\hat{b}\|_{L_{1}(Q_{r}^{-}(\bar{z})\cap Q_{1/2}^{-})}\leq N\int_{Q_{r}^{-}(\bar{z})\cap Q_{1/2}^{-}}|\hat{b}|\ dx\ dt.
\end{align*}
Therefore, the hypothesis of Lemma \ref{lemma weak} is satisfied. The proof of this lemma is finished.
\end{proof}

Denote
$$
\mathbb{P}_{1}=\bigg\{p:~p(x)=\sum_{\beta=1}^{d-1}\ell_{\beta}x^{\beta}+\vartheta(x^{d}) \bigg\},
$$
where $\ell_{\beta}$'s are constants and $\vartheta(\cdot)$ is a measurable function.
For any $z_{0}\in (-9/16,0)\times B_{3/4}$, we also denote
$$
\mathbb{P}^{z_0}_{1}=\left\{p:p(x)=\ell_0+\sum_{\beta=1}^{d-1}\ell_\beta (x^{\beta}-x_0^\beta)
+\int_{x_0^d}^{x^{d}}\Big(\bar{A}^{dd}(z'_{0},s)\Big)^{-1}
\Big(
\ell_{d}-\sum_{\beta=1}^{d-1}
\bar{A}^{d\beta}(z'_{0},s)\ell_{\beta}\Big)\ ds \right\},
$$
where $\ell_{\beta}$'s are constants. Clearly, $\mathbb{P}^{z_0}_{1}\subset \mathbb{P}_{1}$.
\begin{lemma} \label{lem3.13}
Let $r>0$ and $p\in \mathbb{P}^{z_0}_{1}$. Suppose that
\begin{equation}
                            \label{eq3.05}
\fint_{B_r(x_0)}|p(x)|^q\,dx\le C_0^q r^{q(1+\delta')},
\end{equation}
where $C_0\ge 0$ is a constant.
Then we have
$$
|\ell_0|\le NC_0 r^{1+\delta'},\quad |\ell_\beta|\le NC_0 r^{\delta'},\ \beta=1,\ldots,d,
$$
where $N>0$ depends only on $d$, $n$, $\nu$, $q$, and $\delta'$.
\end{lemma}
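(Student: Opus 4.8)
The plan is to exploit the special structure of $p\in\mathbb{P}^{z_0}_1$: for each fixed height $x^d$, the slice $y'\mapsto p(y',x^d)$ is \emph{affine} in $y'$, with matrix gradient $M:=(\ell_1,\ldots,\ell_{d-1})\in\mathbb R^{n\times(d-1)}$ (independent of $x^d$) and value $L(x^d):=p(x_0',x^d)=\ell_0+\int_{x_0^d}^{x^d}(\bar{A}^{dd}(z'_0,s))^{-1}\bigl(\ell_d-\sum_{\beta=1}^{d-1}\bar{A}^{d\beta}(z'_0,s)\ell_\beta\bigr)\,ds$ at $x_0'$. So I would first pull $M$ and the one-dimensional profile $L(\cdot)$ out of the $L^q$-mean bound \eqref{eq3.05} by a Fubini/slicing argument, and then separately analyze $L$ to recover $\ell_0$ and $\ell_d$. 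Throughout I write $I:=[x_0^d-r/2,\,x_0^d+r/2]$ and $\Phi(x^d):=\int_{x_0^d}^{x^d}(\bar{A}^{dd}(z'_0,s))^{-1}\,ds$, and I use the elementary inequalities $|a+b|^q\le|a|^q+|b|^q$ and $(|a|+|b|)^q\ge\tfrac12(|a|^q+|b|^q)$ valid for $0<q\le1$.

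\emph{Step 1 (slicing in $x^d$).} Writing $B_r(x_0)$ in Fubini coordinates and discarding the outer part $|x^d-x_0^d|>r/2$ (on $I$ the cross section $B'_{\rho(x^d)}(x_0')$, $\rho(x^d)=\sqrt{r^2-(x^d-x_0^d)^2}$, has radius $\ge\tfrac{\sqrt3}{2}r$ and measure comparable to $r^{d-1}$) one gets
\[
\omega_d\,C_0^q\, r^{d+q(1+\delta')}\ \ge\ \int_{B_r(x_0)}|p|^q\,dx\ \ge\ c\,r^{d-1}\int_I\fint_{B'_{\rho(x^d)}(x_0')}\bigl|L(x^d)+M(y'-x_0')\bigr|^q\,dy'\,dx^d .
\]
Since the space of affine $\mathbb R^n$-valued functions on $\mathbb R^{d-1}$ is finite-dimensional, all its $L^s$-quasinorms over a ball are equivalent; together with the orthogonality of constants and linear parts over a ball this gives, for each $x^d\in I$, $\bigl(\fint_{B'_{\rho(x^d)}(x_0')}|L(x^d)+M(y'-x_0')|^q\,dy'\bigr)^{1/q}\ge c(q,d,n)\bigl(|L(x^d)|+r|M|\bigr)$. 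Inserting this, applying the elementary inequalities, and comparing the two sides, I would deduce $|M|\le NC_0 r^{\delta'}$ --- hence $|\ell_\beta|\le NC_0 r^{\delta'}$ for $\beta=1,\ldots,d-1$ --- and, separately, $\fint_I|L(x^d)|^q\,dx^d\le NC_0^q r^{q(1+\delta')}$.

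\emph{Step 2 (the $x^d$ variable).} With $|\ell_\beta|\le NC_0 r^{\delta'}$ ($\beta<d$) in hand, the term $\int_{x_0^d}^{x^d}(\bar{A}^{dd})^{-1}\bigl(\sum_{\beta<d}\bar{A}^{d\beta}\ell_\beta\bigr)\,ds$ is bounded by $NC_0 r^{1+\delta'}$ on $I$ (here $|\bar{A}^{d\beta}|,|(\bar{A}^{dd})^{-1}|\le N(\nu)$), so $\fint_I|\ell_0+\Phi(x^d)\ell_d|^q\,dx^d\le NC_0^q r^{q(1+\delta')}$. Now $\bar{A}^{dd}(z'_0,\cdot)$ satisfies the uniform bounds $\bar{A}^{dd}\eta\cdot\eta\ge\nu|\eta|^2$, $|\bar{A}^{dd}|\le\nu^{-1}$, so $(\bar{A}^{dd})^{-1}$ is bounded and uniformly positive definite, $(\bar{A}^{dd})^{-1}\eta\cdot\eta\ge\nu^3|\eta|^2$; consequently $\Psi(t):=\Phi(x_0^d+t)-\Phi(x_0^d-t)=\int_{x_0^d-t}^{x_0^d+t}(\bar{A}^{dd})^{-1}\,ds$ satisfies $\Psi(t)\eta\cdot\eta\ge2\nu^3 t|\eta|^2$, hence $|\Psi(t)\ell_d|\ge2\nu^3 t|\ell_d|$. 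Therefore $|(\ell_0+\Phi(x_0^d+t)\ell_d)-(\ell_0+\Phi(x_0^d-t)\ell_d)|=|\Psi(t)\ell_d|\ge2\nu^3 t|\ell_d|$; raising to the power $q$ and integrating over $t\in(0,r/2)$ yields $c(\nu,q)\,|\ell_d|^q r^{q+1}\le\int_I|\ell_0+\Phi(x^d)\ell_d|^q\,dx^d\le NC_0^q r^{1+q(1+\delta')}$, so $|\ell_d|\le NC_0 r^{\delta'}$. Finally $\ell_0=(\ell_0+\Phi(x^d)\ell_d)-\Phi(x^d)\ell_d$ with $|\Phi(x^d)|\le\nu^{-1}|x^d-x_0^d|$, whence $|\ell_0|^q\le\fint_I|\ell_0+\Phi(x^d)\ell_d|^q\,dx^d+\nu^{-q}|\ell_d|^q\fint_I|x^d-x_0^d|^q\,dx^d\le NC_0^q r^{q(1+\delta')}$, giving $|\ell_0|\le NC_0 r^{1+\delta'}$.

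\emph{Main obstacle.} The delicate point is that the profile $L$ --- and hence the one-dimensional function $g(x^d):=\ell_0+\Phi(x^d)\ell_d$ --- carries the matrix weight $(\bar{A}^{dd}(z'_0,\cdot))^{-1}$, which depends on $z_0$ (and, upstream, on $r$); one cannot simply invoke equivalence of norms on a fixed finite-dimensional function space with a constant independent of the coefficients. The antisymmetrization $g(x_0^d+t)-g(x_0^d-t)$, which turns the integrated weight into the uniformly positive-definite matrix $\Psi(t)$, is exactly what forces the constant to depend only on $\nu$ (and $d,n,q,\delta'$). The slicing of Step 1 is routine but must come first, since it is the $x'$-affine structure that makes the $L^q$-means controllable with no $L^p$ ($p>1$) information available.
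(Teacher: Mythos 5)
Your proposal is correct, and it reaches the same conclusions by a route that overlaps with, but is not identical to, the paper's. The paper handles the tangential coefficients by a pure reflection identity: since $p(x)-p(-x^1,x^2,\ldots,x^d)=2\ell_1x^1$ and $B_r$ is symmetric, integrating the $q$-th power of this difference over $B_r$ immediately gives $|\ell_\beta|\le NC_0r^{\delta'}$ for $\beta=1,\ldots,d-1$, with no slicing and no norm-equivalence lemma; your Step 1 instead uses Fubini plus equivalence of $L^q$-quasinorms on the finite-dimensional space of affine functions of $y'$, which is also valid (the coefficient matrices enter only through the slice value $L(x^d)$, so the equivalence constant is indeed coefficient-independent), though the parenthetical appeal to ``orthogonality of constants and linear parts'' is not the right tool for $q<1$ and should be dropped in favor of the compactness/scaling argument you already give; as a by-product your slicing also yields the one-dimensional mean bound $\fint_I|L|^q\le NC_0^qr^{q(1+\delta')}$, which you then use. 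For $\ell_d$ the two arguments are essentially the same mechanism: the paper compares $p(x)$ with $p(x',x^d/2)$ and bounds the difference below by $(N^{-1}\nu|\ell_d|-N\nu^{-1}|q'|)|x^d|$ using the parabolicity of $\bar A^{dd}$, while you antisymmetrize about $x_0^d$ so that the difference is exactly $\Psi(t)\ell_d$ with $\Psi(t)$ uniformly positive definite ($(\bar A^{dd})^{-1}\xi\cdot\xi\ge\nu^3|\xi|^2$); both then integrate in $L^q$ and use the already-established tangential bounds, and the $\ell_0$ bound is obtained identically in both proofs. What the paper's route buys is brevity and complete elementarity; what yours buys is a slightly more systematic reduction to a one-dimensional profile, at the cost of an extra (standard) finite-dimensional equivalence step.
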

\begin{proof}
Without loss of generality, we may assume that $x_0=0$. Since $p(x)-p(-x^1,x^2,\ldots,x^d)=2\ell_1x^1$, we have
$$
\fint_{B_r}|2\ell_1 x^1|^q\,dx
\le \fint_{B_r}(|p(x)|^q+|p(-x^1,x^2,\ldots,x^d)|^q)\,dx
\le C_0^q r^{q(1+\delta')},
$$
which implies that $|\ell_1|\le NC_0 r^{\delta'}$. Similarly, we get
\begin{align}
                                \label{eq2.51}
|\ell_\beta|\le NC_0 r^{\delta'}\quad \text{for}\  \beta=2,\ldots,d-1.
\end{align}
By using the parabolicity condition of $\bar{A}^{dd}$, we have
$$
|p(x)-p(x',x^d/2)|\ge  (N^{-1}\nu |\ell_d|-N\nu^{-1}|q'|)|x^d|,
$$
where $q'=(\ell_1,\ldots,\ell_{d-1})$. This together with \eqref{eq2.51} gives
$$
\fint_{B_r}|\ell_d|^q |x^d|^q\,dx
\le NC_0^q r^{q(1+\delta')}+\fint_{B_r}|p(x)-p(x',x^d/2)|^q\,dx\le NC_0^q r^{q(1+\delta')},
$$
which implies
$$
|\ell_d|\le NC_0 r^{\delta'}.
$$
Finally, the bound of $\ell_0$ follows from \eqref{eq3.05} and the bounds of $\ell_\beta$, where $\beta=1,\ldots,d$. The lemma is proved.
\end{proof}

Now we are ready to give the proof of \eqref{angle u t}.
\begin{proof}[\bf Proof of \eqref{angle u t}.]
We divide the proof into three steps.

{\bf Step 1. Claim:} for any $z_{0}\in (-9/16,0)\times B_{3/4}$ and $r\in(1,1/4)$, in the coordinate system associated with $z_0$ we can find
$p^{r,z_0}=p^{r,z_0}(x)$ in the form
\begin{align*}
&\ell_0^{r,z_0}+\sum_{\beta=1}^{d-1}\ell_\beta^{r,z_0}(x^{\beta}-x_{0}^{\beta})
+\int_{x_{0}^{d}}^{x^{d}}\Big(\bar{A}^{dd}(z'_{0},s)\Big)^{-1}
\Big(\bar{g}_{d}(z'_{0},s)+
\ell_{d}^{r,z_0}-\sum_{\beta=1}^{d-1}
\bar{A}^{d\beta}(z'_{0},s)\ell_{\beta}^{r,z_0}\Big)\ ds\\
&\quad \in u_1+\mathbb{P}^{z_0}_{1},
\end{align*}
where $\ell_\beta^{r,z_0}$ are constants and $u_1$ is defined in \eqref{defw u1},
such that
\begin{equation} \label{eq12.21}
\fint_{Q_{r}^{-}(z_{0})}|u-p^{r,z_0}|^{q}
\leq NC_{0}^{q}r^{q(1+\delta')},
\end{equation}
where
\begin{equation}\label{def C0}
C_{0}=\sum_{j=1}^{M}|g|_{\delta/2,\delta;\overline{\cQ}_{j}}+\|u\|_{L_{p}(\cQ)}+\|Du\|_{L_{1}(\cQ)}.
\end{equation}

For simplicity, we assume that $x_{0}=0$. Applying Lemma \ref{weak est v} to \eqref{equation v0000} and using the same argument that led to \eqref{holder v bar},
we obtain
\begin{align}\label{holder v 00}
\left(\fint_{Q_{r/2}^{-}(z_{0})}|v|^{q}\ dx\ dt\right)^{{1}/{q}}\leq Nr^{1+\delta'}\Big(\|Du\|_{L_{\infty}(Q_{r}^{-}(z_{0}))}+\sum_{j=1}^{M}|g|_{\delta/2,\delta;\overline{\cQ}_{j}}\Big),
\end{align}
where $q\in(0,1)$. Recall that $w=u-u_{1}-v$ satisfies $\mathcal{\bar P}w=0$ in $Q_{r/2}^{-}(z_{0})$. Define
\begin{align*}
p_1(x)&=\hat{T}^{1}w=w(z_0)+x'\cdot D_{x'}w(z_{0})\\
&\qquad +\int_{0}^{x^{d}}\Big(\bar{A}^{dd}(z'_{0},s)\Big)^{-1}
\Big(W(z_{0})-\sum_{\beta=1}^{d-1}
\bar{A}^{d\beta}(z'_{0},s)D_{\beta}w(z_{0})\Big)\ ds\in \mathbb{P}^{z_0}_{1},
\end{align*}
where $W=\sum_{\beta=1}^{d}
\bar{A}^{d\beta}(z'_{0},x^{d})D_{\beta}w$.
Then we have
$$
\|D_{x'}(w-p_1)\|_{L_\infty(Q_{\kappa r}^{-}(z_{0}))}
=\|D_{x'}w-D_{x'}w(z_0)\|_{L_\infty(Q_{\kappa r}^{-}(z_{0}))}
\le N\kappa r[D_{x'}w]_{C^{1/2,1}(Q_{\kappa r}^{-}(z_{0}))}
$$
and
$$
\|\bar{A}^{d\beta}(z'_{0},x^d)D_\beta(w-p_1)\|_{L_\infty(Q_{\kappa r}^{-}(z_{0}))}
=\|W-W(z_0)\|_{L_\infty(Q_{\kappa r}^{-}(z_{0}))}
\le N\kappa r[W]_{C^{1/2,1}(Q_{\kappa r}^{-}(z_{0}))},
$$
which together with Lemma \ref{lemma xn} with a suitable scaling imply
$$
\|D(w-p_1)\|_{L_\infty(Q_{\kappa r}^{-}(z_{0}))}
\le N\kappa r^{-1-(d+2)/q}\|w\|_{L_q(Q_{r/2}^{-}(z_{0}))}.
$$
Since $(w-p_1)(z_0)=0$, by Lemma \ref{lemma xn} with a suitable scaling, we have
\begin{align}
                    \label{eq3.45}
&\fint_{Q_{\kappa r}^{-}(z_{0})}|w-p_1|^{q}\ dx\ dt
\le N\fint_{Q_{\kappa r}^{-}(z_{0})}(|w-w(t_0,x)|+|w(t_0,x)-p_1|^{q}\ dx\ dt\nonumber\\
&\leq N(\kappa r)^{2q}\|w_t\|^q_{L_\infty(Q_{\kappa r}^{-}(z_{0}))}
+N(\kappa r)^{q}\|D(w-p_1)\|_{L_\infty(Q_{\kappa r}^{-}(z_{0}))}^{q}\nonumber\\
&\leq N\kappa^{2q}\fint_{Q_{r/2}^{-}(z_{0})}|w|^{q}\ dx\ dt.
\end{align}
Noting that $w-p$ satisfies the same equation as $w$ for any $p\in \mathbb{P}^{z_0}_{1}$ and $\hat T^1 p=p$, we then infer from \eqref{eq3.45} that for any $p\in \mathbb{P}^{z_0}_{1}$,
\begin{equation}
                    \label{eq3.45b}
\fint_{Q_{\kappa r}^{-}(z_{0})}|w-p_1|^{q}\ dx\ dt
\leq N\kappa^{2q}\fint_{Q_{r/2}^{-}(z_{0})}|w-p|^{q}\ dx\ dt.
\end{equation}

By using \eqref{holder v 00}, \eqref{eq3.45b}, $w=u-u_1-v$, the triangle inequality, \eqref{est Du''}, and the proof of Lemma \ref{lemma itera} (cf. \eqref{iteration u bar} and \eqref{formula D'u U}), we have
\begin{align}
&\fint_{Q_{\kappa r}^{-}(z_{0})}|u-u_1-p_1|^{q}\ dx\ dt\nonumber\\
&\leq\fint_{Q_{\kappa r}^{-}(z_{0})}|w-p_1|^{q}\ dx\ dt+\fint_{Q_{\kappa r}^{-}(z_{0})}|v|^{q}\ dx\ dt\nonumber\\
&\leq\kappa^{2q}\fint_{Q_{r/2}^{-}(z_{0})}|w-p|^{q}\ dx\ dt+N\kappa^{-(d+2)}r^{q(1+\delta')}
\Big(\|Du\|_{L_{\infty}(Q_{r}^{-}(z_{0}))}+\sum_{j=1}^{M}|g|_{\delta/2,\delta;
\overline{\cQ}_{j}}\Big)^{q}\nonumber\\
            \label{eq9.54}
&\leq\kappa^{2q}\fint_{Q_{r/2}^{-}(z_{0})}
|u-u_1-p|^{q}\ dx\ dt+NC_{0}^{q}\kappa^{-(d+2)}r^{q(1+\delta')},
\end{align}
where $C_{0}$ is defined in \eqref{def C0}.

Denote
$$
F(r):=\inf_{p\in\mathbb{P}_{1}^{z_0}}
\fint_{Q_{r}^{-}(z_{0})}|u-u_1-p|^{q}\ dx\ dt.
$$
Then from \eqref{eq9.54} we have
\begin{align*}
F(\kappa r)\leq\kappa^{2q}F(r)+NC_{0}^{q}\kappa^{-(d+2)}r^{q(1+\delta')}
\end{align*}	
for any $r\in (0,1/4)$ and $\kappa\in(0,1/2)$. Then by using a well-known iteration argument (see, for instance, \cite[Lemma 2.1, p. 86]{g}), we have
\begin{align*}
F(r)\leq r^{q(1+\delta')}F(1/4)+NC_{0}^{q}r^{q(1+\delta')}\leq NC_{0}^{q}r^{q(1+\delta')}.
\end{align*}
Thus, we
conclude \eqref{eq12.21}.

{\bf Step 2. Convergence of $\ell_{\beta}^{r,z_{0}}$, $\beta=1,\dots,d$.}
By using the triangle inequality and \eqref{eq12.21}, we have
\begin{align*}
\fint_{Q_{r}^{-}(z_{0})}|p^{r,z_0}-p^{2r,z_{0}}|^{q}
&\le \fint_{Q_{r}^{-}(z_{0})}|u-p^{r,z_0}|^q+|u-p^{2r,z_0}|^{q}
\leq NC_{0}^{q}r^{q(1+\delta')}.
\end{align*}
Since
\begin{align*}
&p^{r,z_{0}}-p^{2r,z_{0}}
=\ell_0^{r,z_{0}}-\ell_0^{2r,z_{0}}+\sum_{\beta=1}^{d-1}\big(\ell_\beta^{r,z_{0}}-\ell_\beta^{2r,z_{0}}\big) x^{\beta}\\
&\quad+\int_{0}^{x^{d}}\Big(\bar{A}^{dd}(z'_{0},s)\Big)^{-1}
\Big(\ell_{d}^{r,z_{0}}-
\ell_{d}^{2r,z_{0}}-\sum_{\beta=1}^{d-1}
\bar{A}^{d\beta}(z'_{0},s)\big(\ell_{\beta}^{r,z_{0}}-\ell_{\beta}^{2r,z_{0}}\big)\Big)\ ds\in\mathbb{P}_{1}^{z_0},
\end{align*}
we can apply Lemma \ref{lem3.13} to get
\begin{equation}\label{difference ell}
|\ell_0^{r,z_0}-\ell_0^{2r,z_0}|\le NC_0 r^{1+\delta'},\quad |\ell^{r,z_0}_\beta-\ell^{2r,z_0}_\beta|\le NC_0 r^{\delta'},\ \beta=1,\ldots,d.
\end{equation}
Therefore, the limits of $\ell^{r,z_0}_\beta$ exist and are denoted by $\ell_\beta^{z_0}$ for $\beta=0,1,\ldots,d$.
Since $u$ is continuous, it is easily seen  that for any $r\in (0,1/4)$,
\begin{equation}\label{limit l}
u(z_0)=\ell_0^{z_0},\quad |u(z_0)-\ell_0^{r,z_0}|\le NC_0 r^{1+\delta'}.
\end{equation}

Next, we claim that for $\beta=1,\dots,d-1$,
\begin{align}\label{difference Du U}
\fint_{Q_{r/2}^{-}(z_{0})}|D_{\beta}u-\ell_{\beta}^{r,z_{0}}|^{q}+\fint_{Q_{r/2}^{-}(z_{0})}|U-\ell_{d}^{r,z_{0}}|^{q}
\leq NC_{0}^{q}r^{q\delta'}.
\end{align}
In fact, recalling the definition of the operator $\bar{\mathcal{P}}$ in \eqref{operator barP}, we have
\begin{align*}
\bar{\mathcal{P}}(u-p^{r,z_{0}})=\Div(g-\bar{g}+(\bar{A}(z'_{0},x^{d})-A(z))Du).
\end{align*}
Then by using Lemma \ref{lem loc lq} with $0<q<1<p<\infty$, the interpolation inequality,  Lemma \ref{lem3.4}, \eqref{eq12.21}, and Lemma \ref{difference holder}, we have
\begin{align*}
&\|D(u-p^{r,z_{0}})\|_{L_{p}(Q_{r/2}^{-}(z_{0}))}\\
&\leq Nr^{-1+\frac{d+2}{p}-\frac{d+2}{q}}\|u-p^{r,z_{0}}\|_{L_{q}(Q_{r}^{-}(z_{0}))}
+N\|g-\bar{g}+(\bar{A}(z'_{0},x^{d})-A)Du\|_{L_{p}(Q_{r}^{-}(z_{0}))}\\
&\leq Nr^{-1+\frac{d+2}{p}-\frac{d+2}{q}}\|u-p^{r,z_{0}}\|_{L_{q}(Q_{r}^{-}(z_{0}))}
+NC_{0}r^{\frac{d+2}{p}-(d+2)}\|g-\bar{g}+(\bar{A}(z'_{0},x^{d})-A)Du\|_{L_{1}(Q_{r}^{-}(z_{0}))}\\
&\leq NC_{0}r^{\delta'+\frac{d+2}{p}}.
\end{align*}
By using H\"{o}lder's inequality, we obtain
\begin{align*}
\fint_{Q_{r/2}^{-}(z_{0})}|D(u-p^{r,z_{0}})|^{q}&\leq\left(\fint_{Q_{r/2}^{-}(z_{0})}|D(u-p^{r,z_{0}})|^{p}\right)^{\frac{q}{p}}\nonumber\\
&\leq NC_{0}^{q}r^{q\delta'+\frac{q(d+2)}{p}}\leq NC_{0}^{q}r^{q\delta'}.
\end{align*}
We thus obtain
\begin{align}\label{difference Du p}
\fint_{Q_{r/2}^{-}(z_{0})}|D_{x'}(u-p^{r,z_{0}})|^{q}+\fint_{Q_{r/2}^{-}(z_{0})}|\bar{A}^{d\beta}D_{\beta}(u-p^{r,z_{0}})|^{q}\leq NC_{0}^{q}r^{q\delta'}.
\end{align}
A direct calculation yields
$$D_{\beta}p^{r,z_{0}}=\ell_{\beta}^{r,z_{0}},~\beta=1,\dots,d-1,\quad \sum_{\beta=1}^{d}\bar{A}^{d\beta}D_{\beta}p^{r,z_{0}}=\bar{g}_{d}+\ell_{d}^{r,z_{0}}.$$
Combining $U=\sum_{\beta=1}^{d}A^{d\beta}D_{\beta}u-g_{d}$, \eqref{difference Du p}, the triangle inequality, and Lemma \ref{difference holder}, we have
\begin{align*}
&\fint_{Q_{r/2}^{-}(z_{0})}|D_{\beta}u-\ell_{\beta}^{r,z_{0}}|^{q}+\fint_{Q_{r/2}^{-}(z_{0})}|U-\ell_{d}^{r,z_{0}}|^{q}\\
&\leq NC_{0}^{q}r^{q\delta'}+\fint_{Q_{r/2}^{-}(z_{0})}|A^{d\beta}D_{\beta}u-\bar{A}^{d\beta}(x^{d})D_{\beta}u+\bar{g}_{d}-g_{d}|^{q}\\
&\leq NC_{0}^{q}r^{q\delta'}.
\end{align*}
Therefore, we prove the claim \eqref{difference Du U}.
It follows from \eqref{difference Du U}, \eqref{difference ell} and the continuity of $D_{\beta}u$ and $U$ in \eqref{Lip Du} that
\begin{equation}
                \label{eq2.12}
D_{\beta}u(z_0)=\ell_\beta^{z_0},\quad |D_{\beta}u(z_0)-\ell_\beta^{r,z_0}|\le NC_0 r^{\delta'},\quad\beta=1,\dots,d-1,
\end{equation}
and
\begin{align}\label{limit ld}
U(z_0)=\ell_d^{z_0},\quad |U(z_0)-\ell_d^{r,z_0}|\le NC_0 r^{\delta'}.
\end{align}

Let $z_1=(t_0-r^2,0)$. Similar to \eqref{limit l}, \eqref{eq2.12}, and \eqref{limit ld}, we have for any $r\in (0,1/4)$, under the coordinate system associated with $z_{1}$,
\begin{equation}\label{limit l00}
\begin{split}
u(z_1)&=\ell_0^{z_1},\quad |u(z_1)-\ell_0^{r,z_1}|\le NC_0 r^{1+\delta'},\\
D_{y^{\beta}}u(z_1)&=\ell_\beta^{z_1},\quad |D_{y^{\beta}}u(z_1)-\ell_\beta^{r,z_1}|\le NC_0 r^{\delta'},\quad\beta=1,\dots,d-1,
\end{split}
\end{equation}
and
\begin{align}\label{limit ld00}
\tilde U(z_1)=\ell_d^{z_1},\quad |\tilde U(z_1)-\ell_d^{r,z_1}|\le NC_0 r^{\delta'},
\end{align}
where  $D_{y^{\beta}}$ denotes the derivatives with respect to the space variables in the coordinate system associated with $z_{1}$ and $\tilde U=A^{d\beta}D_{y^{\beta}}u-g_{d}$.

{\bf Step 3. Estimate of $\langle u\rangle_{1+\delta'}$.}
Finally, we are going to estimate $\langle u\rangle_{1+\delta'}$. If $|t_{0}-t_{1}|^{1/2}=r>1/32$, then \eqref{angle u t} is obvious. If $r\leq1/32$,  then by the triangle inequality, we have
\begin{equation}\label{estimate u}
|u(z_{0})-u(z_{1})|^{q}\leq|u(z_{0})-\ell_0^{2r,z_0}|^{q}+|\ell_0^{2r,z_0}-\ell_0^{2r,z_1}|^{q}+|u(z_{1})-\ell_0^{2r,z_1}|^{q}.
\end{equation}
Next we estimate $|\ell_0^{2r,z_0}-\ell_0^{2r,z_1}|$. 
Noting that
\begin{align*}
|\ell_0^{2r,z_0}-\ell_0^{2r,z_1}|^{q}
&\leq|p^{2r,z_0}-p^{2r,z_1}|^{q}+\Big|\sum_{\beta=1}^{d-1}\ell_\beta^{2r,z_{0}}x^{\beta}
-\sum_{\beta=1}^{d-1}\ell_\beta^{2r,z_{1}} y^{\beta}\Big|^{q}\\
&\quad+\Bigg|\int_{0}^{x^{d}}\Big(\bar{A}^{dd}(z'_{0},s)\Big)^{-1}
\Big(\bar{g}_{d}(z'_{0},s)+
\ell_{d}^{2r,z_{0}}-\sum_{\beta=1}^{d-1}
\bar{A}^{d\beta}(z'_{0},s)\ell_{\beta}^{2r,z_{0}}\Big)\ ds\\
&\quad-\int_{0}^{y^{d}}\Big(\bar{A}^{dd}(z'_{1},s)\Big)^{-1}
\Big(\bar{g}_{d}(z'_{1},s)+
\ell_{d}^{2r,z_{1}}-\sum_{\beta=1}^{d-1}
\bar{A}^{d\beta}(z'_{1},s)\ell_{\beta}^{2r,z_{1}}\Big)\ ds\Bigg|^{q}.
\end{align*}
For the first term, by using the triangle inequality and \eqref{eq12.21}, we have
\begin{align}\label{est first}
\fint_{Q_{r}^{-}(z_{1})}|p^{2r,z_0}-p^{2r,z_1}|^{q}&\leq\fint_{Q_{r}^{-}(z_{1})}|u(z)-p^{2r,z_0}|^{q}+\fint_{Q_{r}^{-}(z_{1})}|u(z)-p^{2r,z_1}|^{q}\nonumber\\
&\leq\fint_{Q_{2r}^{-}(z_{0})}|u(z)-p^{2r,z_0}|^{q}+NC_{0}^{q}r^{q(1+\delta')}\leq NC_{0}^{q}r^{q(1+\delta')}.
\end{align}
In order to estimate the last two terms, we first obtain from the estimate $I-X^{-1}$ in the case 1 of the proof of Proposition  \ref{main prop} that
\begin{equation}\label{difference x tilde x}
|x-y|\leq Nr^{1+\delta'}.
\end{equation}
Then for the second term, under the coordinate system associated with $z_{0}$, it follows from the triangle inequality, \eqref{difference x tilde x}, \eqref{limit l}, \eqref{limit l00}, \eqref{Lip Du}, and \eqref{diffe coor} that for $\beta=1,\dots,d-1$,
\begin{align}\label{est second}
|\ell_\beta^{2r,z_{0}}x^{\beta}-\ell_\beta^{2r,z_{1}} y^{\beta}|
&\leq|(\ell_\beta^{2r,z_{0}}-\ell_\beta^{2r,z_{1}})x^{\beta}+\ell_\beta^{2r,z_{1}} (x^{\beta}-y^{\beta})|\nonumber\\
&\leq r\Big(|\ell_\beta^{2r,z_{0}}-D_{\beta}u(z_{0})|+|D_{\beta}u(z_{0})-D_{\beta}u(z_{1})|+| D_{\beta}u(z_{1})-D_{y^{\beta}}u(z_{1})|\nonumber\\
&\quad+|D_{y^{\beta}}u(z_{1})-\ell_\beta^{2r,z_{1}}|\Big)+Nr^{1+\delta'}\nonumber\\
&\leq NC_{0}r^{1+\delta'}.
\end{align}
For the third term, by using the triangle inequality, $\bar{A}^{dd}\geq\nu$, Lemma \ref{difference holder}, and \eqref{difference x tilde x}, we have
\begin{align}\label{est third1}
&\Bigg|\int_{0}^{x^{d}}\Big(\bar{A}^{dd}(z'_{0},s)\Big)^{-1}
\bar{g}_{d}(z'_{0},s)\ ds-\int_{0}^{y^{d}}\Big(\bar{A}^{dd}(z'_{1},s)\Big)^{-1}
\bar{g}_{d}(z'_{1},s)\ ds\Bigg|^{q}\nonumber\\
&\leq\Bigg|\int_{0}^{x^{d}}\Big(\bar{A}^{dd}(z'_{0},s)\Big)^{-1}
\Big(\bar{g}_{d}(z'_{0},s)-\bar{g}_{d}(z'_{1},s)\Big)\ ds\Bigg|^{q}\nonumber\\
&\quad+\Bigg|\int_{0}^{x^{d}}\Bigg(\Big(\bar{A}^{dd}(z'_{0},s)\Big)^{-1}-\Big(\bar{A}^{dd}(z'_{1},s)\Big)^{-1}\Bigg)\bar{g}_{d}(z'_{1},s)\ ds\Bigg|^{q}\nonumber\\
&\quad+\Bigg|\int_{y^d}^{ x^{d}}\Big(\bar{A}^{dd}(z'_{1},s)\Big)^{-1}
\bar{g}_{d}(z'_{1},s)\ ds\Bigg|^{q}\leq NC_{0}^{q}r^{q(1+\delta')}.
\end{align}
Similarly, by using the triangle inequality, $\bar{A}^{dd}\geq\nu$, Lemma \ref{difference holder}, \eqref{difference x tilde x}, \eqref{limit ld}, \eqref{Lip Du}, \eqref{diffe coor}, and \eqref{limit ld00}, we have
\begin{align}\label{est third2}
&\Bigg|\int_{0}^{x^{d}}\Big(\bar{A}^{dd}(z'_{0},s)\Big)^{-1}
\ell_{d}^{2r,z_{0}}\ ds-\int_{0}^{y^{d}}\Big(\bar{A}^{dd}(z'_{1},s)\Big)^{-1}
\ell_{d}^{2r,z_{1}}\ ds\Bigg|^{q}\nonumber\\
&\leq\Bigg|\int_{0}^{x^{d}}\Big(\bar{A}^{dd}(z'_{0},s)\Big)^{-1}
(\ell_{d}^{2r,z_{0}}-\ell_{d}^{2r,z_{1}})\ ds\nonumber\\
&\quad+\int_{0}^{x^{d}}\Bigg(\Big(\bar{A}^{dd}(z'_{0},s)\Big)^{-1}-\Big(\bar{A}^{dd}(z'_{1},s)\Big)^{-1}
\Bigg)\ell_{d}^{2r,z_{1}}\ ds\Bigg|^{q}+\Bigg|\int_{y^d}^{x^{d}}\Big(\bar{A}^{dd}(z'_{1},s)\Big)^{-1}
\ell_{d}^{2r,z_{1}}\ ds\Bigg|^{q}\nonumber\\
&\leq N|x^{d}|^q
|\ell_{d}^{2r,z_{0}}-U(z_{0})|^{q}
+N|x^{d}|^q|U(z_{0})-U(z_{1})|^{q}
+N|x^{d}|^q|U(z_{1})-\ell_{d}^{2r,z_{1}}|^{q}+Nr^{q(1+\delta')}\nonumber\\
&\leq NC_{0}^{q}r^{q(1+\delta')}+N|x^{d}|^q|U(z_{1})-\tilde U(z_{1})|^{q}+N|x^{d}|^q|\tilde U(z_{1})-\ell_{d}^{2r,z_{1}}|^{q}\nonumber\\
&\leq NC_{0}^{q}r^{q(1+\delta')}.
\end{align}
Also, we have
\begin{align}\label{est third3}
&\Bigg|-\int_{0}^{x^{d}}\Big(\bar{A}^{dd}(z'_{0},s)\Big)^{-1}
\sum_{\beta=1}^{d-1}
\bar{A}^{d\beta}(z'_{0},s)\ell_{\beta}^{2r,z_{0}}\ ds\nonumber\\
&\quad+\int_{0}^{y^{d}}\Big(\bar{A}^{dd}(z'_{1},s)\Big)^{-1}
\sum_{\beta=1}^{d-1}
\bar{A}^{d\beta}(z'_{1},s)\ell_{\beta}^{2r,z_{1}}\ ds\Bigg|^{q}\leq NC_{0}^{q}r^{q(1+\delta')}.
\end{align}

Now, coming back to \eqref{estimate u}, taking the average over $z\in Q_{r}^{-}(z_{1})$ and taking the $q$th root, using \eqref{est first}, and \eqref{est second}--\eqref{est third3}, we have
\begin{equation*}
|u(z_0)-u(z_1)|\leq NC_0 r^{1+\delta'}.
\end{equation*}
Therefore, we finish the proof of \eqref{angle u t}.
\end{proof}

\section{Weighted weak type-$(1,1)$ estimates}\label{sec thm3}

This section is devoted to the proof of a global weak type-$(1,1)$ estimate with respect to $A_{1}$ Muckenhoupt weights for solutions to
$$
\mathcal{P}u=\Div f,$$
with the coefficients $A=(A^{\alpha\beta})$ satisfying the following condition.
\begin{assumption}\label{assump omega}
(1) $A$ is of piecewise Dini mean oscillation in $\cQ$, and  there exists some constant $c_{0}>0$ such that for any $r\in(0,1/2)$, $\omega_{A}(r)\leq c_{0}(\ln r)^{-2}$.
	
(2) For some constant $c_1,c_2>0$, $\omega_0'(R_0^-)\geq c_{1}$ and for any $R\in(0,R_0/2)$, $\omega_0(R)\leq c_{2}(\ln R)^{-2}$.
\end{assumption}

We say $w: \mathbb R^{d+1}\rightarrow[0,\infty)$ belongs to $A_{1}$ if there exists some constant $C$ such that for all parabolic cylinders $Q$ in $\mathbb R^{d+1}$,
$$\fint_{Q}w(s,y)\ ds\ dy\leq C\inf_{z\in Q}w(z).$$
The $A_{1}$ constant $[w]_{A_{1}}$ of $w$ is defined as the infimum of all such $C's$.
In order to state our result, we first denote
$$
w(\cQ):=\int_{\cQ}w(z)\ dz,\quad  \|f\|_{L_{p,w}(\cQ)}:=\Big(\int_{\cQ}|f|^{p}w\ dz\Big)^{1/p},~ p\in[1,\infty),
$$
and then introduce the weighted Sobolev space:
$$\mathcal{H}_{p,w}^{1}(\cQ):=\{u: u_{t}\in \mathbb{H}_{p,w}^{-1}(\cQ), u,Du\in L_{p,w}(\cQ)\},$$
where
$$\mathbb{H}_{p,w}^{-1}(\cQ):=\Big\{f: f=\sum_{|\alpha|\leq1}D^{\alpha}f_{\alpha}, f_{\alpha}\in L_{p,w}(\cQ)\Big\},$$
$$\|f\|_{\mathbb{H}_{p,w}^{-1}(\cQ)}:=\inf\Big\{\sum_{|\alpha|\leq1}\|f_{\alpha}\|_{L_{p,w}(\cQ)}: f=\sum_{|\alpha|\leq1}D^{\alpha}f_{\alpha}\Big\},$$
and
$$\|u\|_{\mathcal{H}_{p,w}^{1}(\cQ)}:=\|u_{t}\|_{\mathbb{H}_{p,w}^{-1}(\cQ)}+\sum_{|\alpha|\leq1}\|D^{\alpha}u\|_{L_{p,w}(\cQ)}.$$
Denote $\delta_{0}:=\min_{1\leq j\leq M-1}\mbox{dist}\{\partial_{p} \cQ_{j}, (-T,0)\times \partial\cD\}$.

Recall that in the proof of Lemma \ref{weak est barv}, Lemmas \ref{lemma weak} and \ref{solvability} are the key points. We will use generalizations of the two lemmas since our estimate and argument depend on the coordinate system, one of them is stated below. Let $\{Q_{\alpha}^{k}\}$ be a collection of dyadic ``parabolic cubes'' in $\cQ$. See \cite[Theorem~11]{Ch90} and also the proof of \cite[Lemma 4.1]{dek}.
Let $p,c\in (1,\infty)$.
\begin{assumption}
	\label{assump9.26}
	i) $S$ is a bounded linear operator on $L_{p,w}(\cQ)$.
	
	ii) If for some $f\in L_{p,w}(\cQ)$, $s>0$, and some cube $Q_\alpha^k$ we have
	$$
	s<\frac{1}{w(Q_\alpha^k)}\int_{Q_\alpha^k}|f|w\ dz\leq C_{0}s,
	$$
	then $f$ admits a decomposition $f=g+b$ in $Q_\alpha^k$, where $g$ and $b$ satisfy
	\begin{equation*}
	\int_{Q_\alpha^k}|g|^{p}w\ dz\leq C_1s^{p}w(Q_\alpha^k),\quad\int_{\cQ\setminus Q_{cr}(z_{0})}\big|S(b\chi_{Q_\alpha^k})\big|w\ dz\leq C_1sw(Q_\alpha^k)
	\end{equation*}
	with $z_0\in Q_\alpha^k$ and $r=\text{diam}\,Q_\alpha^k$.
\end{assumption}
Then the same proof as in \cite[Lemma 6.3]{dx} gives the following result.
\begin{lemma}\label{weak general}
	Under Assumption \ref{assump9.26}, for any $f\in L_{p,w}(\cQ)$ and $s>0$, we have
	\begin{equation*}
	w(\{z\in \cQ: |Sf(z)|>s\})\leq\frac{N}{s}\int_{\cQ}|f|w\ dz,
	\end{equation*}
	where $N=N(d,c,\cQ,C_1,\|S\|_{L_{p,w}\rightarrow L_{p,w}})$ is a constant. Moreover, $S$ can be extended to a bounded operator from $L_{1,w}(\cQ)$ to weak-$L_{1,w}(\cQ)$.
\end{lemma}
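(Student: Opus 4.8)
The plan is to run a weighted Calder\'on--Zygmund decomposition adapted to the dyadic parabolic cube structure $\{Q_\alpha^k\}$ of $\cQ$, following the argument in \cite[Lemma~6.3]{dx}. First I would dispose of the trivial range: if $0<s\le s_0:=w(\cQ)^{-1}\int_\cQ |f|w\,dz$, then $w(\{|Sf|>s\})\le w(\cQ)\le s^{-1}\int_\cQ|f|w\,dz$ and there is nothing to prove, so assume $s>s_0$. Starting from the top-level cubes (whose $w$-measure is comparable to $w(\cQ)$ and on which the $w$-average of $|f|$ is at most $s$), perform the stopping-time selection: let $\{Q_i\}$ be the maximal dyadic parabolic cubes with $w(Q_i)^{-1}\int_{Q_i}|f|w\,dz>s$. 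Maximality together with the doubling property of $w$ (valid since $w\in A_1$) gives $s<w(Q_i)^{-1}\int_{Q_i}|f|w\,dz\le C_0 s$ for each $i$, and the Lebesgue differentiation theorem with respect to the measure $w\,dz$ gives $|f|\le s$ $\ w$-a.e.\ on $\cQ\setminus\bigcup_i Q_i$.

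On each selected cube I would apply Assumption \ref{assump9.26}(ii) to write $f=g_i+b_i$ in $Q_i$ with $\int_{Q_i}|g_i|^p w\,dz\le C_1 s^p w(Q_i)$ and $\int_{\cQ\setminus Q_{cr_i}(z_i)}|S(b_i\chi_{Q_i})|w\,dz\le C_1 s\,w(Q_i)$, where $z_i\in Q_i$ and $r_i=\text{diam}\,Q_i$; set $g:=f$, $b:=0$ on $\cQ\setminus\bigcup_i Q_i$, and $g:=g_i$, $b:=b_i\chi_{Q_i}$ on $Q_i$, so that $f=g+b$ on $\cQ$. Then I split
\[
w(\{|Sf|>s\})\le w(\{|Sg|>s/2\})+w(\{|Sb|>s/2\}).
\]
For the good part, Chebyshev's inequality and the $L_{p,w}$-boundedness of $S$ give $w(\{|Sg|>s/2\})\le N s^{-p}\|g\|_{L_{p,w}(\cQ)}^p$, and
\[
\|g\|_{L_{p,w}(\cQ)}^p=\int_{\cQ\setminus\bigcup_i Q_i}|f|^p w\,dz+\sum_i\int_{Q_i}|g_i|^p w\,dz\le s^{p-1}\int_\cQ|f|w\,dz+C_1 s^p\sum_i w(Q_i),
\]
while disjointness of the $Q_i$ and the stopping inequality yield $\sum_i w(Q_i)\le s^{-1}\sum_i\int_{Q_i}|f|w\,dz\le s^{-1}\int_\cQ|f|w\,dz$; hence $w(\{|Sg|>s/2\})\le N s^{-1}\int_\cQ|f|w\,dz$. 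For the bad part, put $\Omega^\ast:=\bigcup_i Q_{cr_i}(z_i)$; the doubling property of $w$ gives $w(\Omega^\ast)\le N\sum_i w(Q_i)\le N s^{-1}\int_\cQ|f|w\,dz$, and on the complement Chebyshev's inequality together with the second estimate of Assumption \ref{assump9.26}(ii) gives
\[
w\big(\{z\in\cQ\setminus\Omega^\ast:\ |Sb(z)|>s/2\}\big)\le\frac{2}{s}\sum_i\int_{\cQ\setminus Q_{cr_i}(z_i)}|S(b_i\chi_{Q_i})|w\,dz\le 2C_1\sum_i w(Q_i)\le\frac{N}{s}\int_\cQ|f|w\,dz.
\]
Combining the three bounds yields the weak type-$(1,1)$ estimate. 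The extension of $S$ to a bounded map from $L_{1,w}(\cQ)$ into weak-$L_{1,w}(\cQ)$ then follows by approximating $f\in L_{1,w}$ by functions in $L_{p,w}\cap L_{1,w}$ in the $L_{1,w}$ norm and using the stability of weak-$(1,1)$ bounds under such limits.

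The main obstacle is not any single estimate but making the dyadic-parabolic machinery interact correctly with the weight: one must know that Christ's dyadic parabolic cubes (see \cite{Ch90}) support the stopping-time argument, that $w\,dz$ is doubling on $\cQ$ so the selected cubes satisfy the two-sided bound and the dilates $Q_{cr_i}(z_i)$ have $w$-measure comparable to $w(Q_i)$, and that the sum $\sum_i b_i$ can be moved through $S$ (handled by working with finite partial sums and passing to the limit via the $L_{p,w}$-boundedness of $S$). A secondary point requiring care is that the operator $S$, hence the decomposition in Assumption \ref{assump9.26}(ii), is attached to a coordinate system depending on each cube; since the constants $C_1$, $c$, and $\|S\|_{L_{p,w}\to L_{p,w}}$ are uniform, this causes no real difficulty.
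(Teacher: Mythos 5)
Your proposal is correct and follows essentially the same route as the paper, which proves this lemma simply by invoking the argument of \cite[Lemma 6.3]{dx}: a weighted Calder\'on--Zygmund stopping-time decomposition with respect to $w\,dz$ on Christ's dyadic parabolic cubes (the weighted analogue of \cite[Lemma 4.1]{dek}), with the good part handled by Chebyshev and the $L_{p,w}$-boundedness of $S$ and the bad part by removing the dilated cylinders and using the second estimate in Assumption \ref{assump9.26}. The technical points you flag (doubling of the $A_1$ weight for the two-sided stopping bound and for $w(Q_{cr_i}(z_i))\leq Nw(Q_i)$, and passing $S$ through the sum of the bad parts via finite partial sums in $L_{p,w}$) are exactly how the cited proof proceeds, so no gap remains.
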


The generalization of Lemma \ref{solvability} and its proof will be given in the Appendix. Now we state our global weak type-$(1,1)$ estimate with $A_{1}$ weights.

\begin{theorem}\label{thm3}
Let $p\in(1,\infty)$, $\cQ:=(-T,0)\times\cD$, $\cD$ have a $C^{1,\text{Dini}}$ boundary,  $\cQ_{1},\dots,\cQ_{M-1}$ be away from $(-T,0)\times \partial\cD$ and satisfy the conditions in Theorem \ref{thm1}. Let $w$ be an $A_{1}$ Muckenhoupt weight and Assumption \ref{assump omega} be satisfied. For $f\in L_{p,w}((-T,0)\times\cD)$ with $T\in(0,\infty)$, let $u\in \mathcal{H}_{p,w}^{1}(\cQ)$ be a weak solution to
\begin{align*}
	\begin{cases}
	\mathcal{P}u=\Div f&\ \mbox{in}~\cQ,\\
	u=0&\ \mbox{on}~\partial_p \cQ.
	\end{cases}
\end{align*}
Then for any $s>0$, we have
$$w\Big(\{(t,x)\in \cQ: |Du(t,x)|>s\}\Big)\leq\frac{N}{s}\|f\|_{L_{1,w}(\cQ)},$$
where $N$ depends on $n,d,M,\omega_{A},\nu,\Lambda,p,\delta_{0},[w]_{A_1}, T$, the $C^{1,\text{Dini}}$ characteristics of $\cD$, $\cQ_{j}$, and $C^{\gamma_{0}}$ norms of $\cQ_{j}$ with respect to $x$ and $t$, respectively. Moreover, the linear operator $S: f\mapsto Du$ can be extended to a bounded operator from $L_{1,w}(\cQ)$ to weak-$L_{1,w}(\cQ)$.
\end{theorem}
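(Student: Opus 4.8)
The plan is to reduce Theorem \ref{thm3} to an application of the abstract weak type-$(1,1)$ result Lemma \ref{weak general}, with the operator $S: f\mapsto Du$. The first step is to establish the $L_{p,w}$-boundedness of $S$, i.e. item (i) of Assumption \ref{assump9.26}; this follows from the weighted $\mathcal{H}_{p,w}^{1}$-solvability for parabolic systems with partially VMO coefficients announced for the Appendix (the generalization of Lemma \ref{solvability}), once we observe that, by Assumption \ref{assump omega}(1) and \eqref{est A}, the coefficients $A$ are globally of sufficiently small partially VMO type in cylinders of radius below a fixed threshold, so that the weighted solvability theory applies on $\cQ$ after the usual cut-off/localization. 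The second and main step is to verify item (ii): given $f$, $s>0$, and a dyadic parabolic cube $Q_\alpha^k$ with $s<\frac{1}{w(Q_\alpha^k)}\int_{Q_\alpha^k}|f|w\,dz\le C_0 s$, we must produce a decomposition $f=g+b$ on $Q_\alpha^k$ with the stated bounds.

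For this decomposition, I would follow the template of \cite[Lemma 6.3]{dx} adapted to the parabolic, coordinate-dependent setting of the present paper. One fixes the center $z_0\in Q_\alpha^k$, chooses the coordinate system associated with $z_0$ as in Subsection \ref{subsection domain}, and lets $v$ solve the modified constant-coefficient-in-$x^d$ problem $\tilde{\mathcal{P}}v=\Div(b\chi_{Q_\alpha^k})$ with zero lateral/initial data, where $b$ is the ``bad'' part obtained from a Calderón–Zygmund/averaging truncation of $f$ on $Q_\alpha^k$ (roughly $b = f - (f)_{Q_\alpha^k}$, times appropriate cutoffs so that $b$ has mean zero), and $g$ is the remaining ``good'' part with $|g|\lesssim s$ pointwise, giving the first inequality $\int_{Q_\alpha^k}|g|^pw\le C_1 s^p w(Q_\alpha^k)$ directly from the $A_1$ (hence $A_p$) property of $w$. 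The off-diagonal decay $\int_{\cQ\setminus Q_{cr}(z_0)}|S(b\chi_{Q_\alpha^k})|w\le C_1 s\, w(Q_\alpha^k)$ is obtained exactly as in the proof of Lemma \ref{weak est barv}: decompose the complement into dyadic annuli $Q_{2^{j+1}cr}\setminus Q_{2^j cr}$, on each annulus test against an adjoint solution $v_0$ solving $\tilde{\mathcal{P}}^*v_0=\Div h$, use that $v_0$ solves the homogeneous adjoint equation on a smaller cylinder, invoke the interior $C^{1/2,1}$-estimates for the adjoint operator (the analogue of Lemma \ref{lemma xn}, valid by the remark following Lemma \ref{lemma xn}) together with the vanishing mean of $b$ to gain a factor $r/R$, and sum the geometric series; the passage from the unweighted $L_1$ bound to the $w$-weighted one uses $A_1$ and the fact that, after splitting into annuli, each annular integral is controlled by $s\,w(Q_\alpha^k)$ times a summable factor, exactly as in \cite{dx}.

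Two parabolic-specific points need care and I expect them to be the main obstacles. First, because the relevant operators $\tilde{\mathcal{P}}$, $\bar{\mathcal{P}}_{z'_0}$ and the coordinate system all depend on the chosen center $z_0$, the decomposition in Assumption \ref{assump9.26}(ii) must be set up cube-by-cube with a $z_0$-dependent flattening, and one has to check that the constants $C_0,C_1$ remain uniform in $z_0$; this is handled by the uniform $C^{1,\text{Dini}}$ and $C^{\gamma_0}$ characteristics of the subdomains together with Lemma \ref{volume} and estimate \eqref{est A}, which give a $z_0$-independent control of $\fint_{Q_r^-}|\hat A-\bar A|$, and by Assumption \ref{assump omega} which quantifies the smallness needed for the solvability theory near $\partial\cD$ and across the interfaces (this is precisely where the logarithmic decay hypotheses $\omega_A(r)\le c_0(\ln r)^{-2}$ and $\omega_0(R)\le c_2(\ln R)^{-2}$ enter, ensuring the summability of the error terms over the annuli). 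Second, one must treat the global nature of the estimate, i.e. cubes that meet $\partial_p\cQ$: here one uses the boundary version of the interior estimates, invoking that $\cD$ has a $C^{1,\text{Dini}}$ boundary and that $\cQ_1,\dots,\cQ_{M-1}$ are separated from $(-T,0)\times\partial\cD$ by $\delta_0$, so that near the boundary only the domain geometry (not an interface) is present and the flattening-reflection argument from the weighted solvability theory applies. Once items (i) and (ii) of Assumption \ref{assump9.26} are verified with uniform constants, Lemma \ref{weak general} immediately yields $w(\{|Du|>s\})\le \frac{N}{s}\|f\|_{L_{1,w}(\cQ)}$ and the extension of $S$ to a bounded map $L_{1,w}\to \text{weak-}L_{1,w}$, completing the proof.
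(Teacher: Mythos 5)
Your overall architecture is the same as the paper's: prove $L_{p,w}$-boundedness of $S:f\mapsto Du$ from the weighted solvability in the Appendix (Lemma \ref{sol weight}), verify Assumption \ref{assump9.26}(ii) cube-by-cube via the mean-zero decomposition $b=f-(f)_{Q_\alpha^k}$ with $|g|\le NC_0s$ from the $A_1$ property, run a duality argument over dyadic annuli, split into the cases $\dist(x_k,\partial\cD)\le\delta_0/2$ (boundary flattening, no interfaces) and $\ge\delta_0/2$ (interior, interfaces present), and conclude with Lemma \ref{weak general}. That is exactly how the paper proceeds.

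There is, however, one concrete gap in the key off-diagonal step as you wrote it. The quantity you must bound is $\int_{\cQ\setminus Q_{cr}(z_k)}|S(b\chi_{Q_\alpha^k})|w$, and $S(b\chi_{Q_\alpha^k})=Du_1$ where $u_1$ solves the \emph{true} problem $\mathcal{P}u_1=\Div b$ in all of $\cQ$ with zero data on $\partial_p\cQ$; it is not the gradient of the solution of the frozen problem $\tilde{\mathcal{P}}v=\Div(b\chi_{Q_\alpha^k})$, and your recipe never bridges the two. Correspondingly, the duality must be performed with the true adjoint, $\mathcal{P}^*u_2=\Div h$, and for that operator the interior $C^{1/2,1}$ estimate of Lemma \ref{lemma xn} is simply not available (it holds only for coefficients depending on $x^d$ alone, as in Lemma \ref{weak est barv}); what one has instead is the pointwise modulus-of-continuity estimate
\begin{equation*}
|Du_{2}(z)-Du_{2}(z_{k})|\leq N\left(\Big(\tfrac{|z-z_{k}|_{p}}{R}\Big)^{\gamma}
+\int_{0}^{2|z-z_{k}|_{p}}\frac{\hat\omega_{A}(s)}{s}\,ds\right)R^{-d-2}\|Du_{2}\|_{L_{1}(\cQ_{\delta_0 R/(2R_0)}(z_{k}))}
\end{equation*}
coming from the Theorem \ref{thm1}/\cite{dek2} machinery (after flattening in the boundary case), and it is precisely because the Dini term does not scale like $r/R$ that the annular sum is no longer geometric: its convergence is what Assumption \ref{assump omega} (the $(\ln r)^{-2}$ decay of $\omega_A$ and $\omega_0$, hence of $\hat\omega_A$) is for. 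You correctly sense this role of Assumption \ref{assump omega} in your second paragraph, but the argument as stated estimates the frozen-coefficient solution and invokes a $C^{1/2,1}$ bound that fails for the variable-coefficient adjoint; replacing $\tilde{\mathcal{P}},\tilde{\mathcal{P}}^*$ by $\mathcal{P},\mathcal{P}^*$ and the $C^{1/2,1}$ bound by the Dini-type oscillation estimate above (as in the paper and in \cite[Theorem 5.2]{dx}) repairs the proof.
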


\begin{proof}
We follow a similar argument in the proof of \cite[Theorem 5.2]{dx}, in which the weighted $W_{w}^{1,p}$-solvability and estimates for divergence form elliptic systems are the important ingredients. Here we use the $\mathcal{H}_{p,w}^{1}$-solvability and estimates with $A_{p}$ weights, Lemma \ref{sol weight}, for divergence form parabolic systems, to conclude that the map $S: f\mapsto Du$ is a bounded linear map on $L_{p,w}(\cQ)$. We need to show that $S$ verifies the conditions of Lemma \ref{weak general}.

For a fixed $z_{k}=(t_{k},x_{k})\in Q_{\alpha}^{k}$, we associate $Q_{\alpha}^{k}$ with a parabolic cylinder $Q_k=Q_{r_k}^{-}(z_{k})$ such that $z_{k}\in Q_{\alpha}^{k}\subset Q_{k}$, where $r_k=\text{diam}\, Q_{\alpha}^{k}\leq {\delta_{0}}/{2}$. Suppose for some $Q_{\alpha}^{k}$ and $s>0$,
	\begin{equation}\label{prop f Ql}
	s<\frac{1}{w(Q_{\alpha}^{k})}\int_{Q_{\alpha}^{k}}|f|w\ dz\leq C_{0}s.
	\end{equation}
We need to check that $f$ enjoys the Assumption \ref{assump9.26}, i.e., $f$ admits a decomposition in $Q_{\alpha}^{k}$.
	
	(i) If $\dist(x_{k},\partial\cD)\leq {\delta_{0}}/{2}$, then $Q_k$ does not intersect with subdomains $\cQ_{j}$, $j=1,\dots,M-1$. In this case, we choose the coordinate system according to $\tilde{z}_{k}\in(-T,0)\times\partial\cD$, which satisfies $|z_{k}-\tilde{z}_{k}|_{p}=\dist(x_{k},\partial\cD)$. Let
	$$
	g:=\fint_{Q_{\alpha}^{k}}f\ dz,\quad b=f-g\quad\mbox{in}~Q_{\alpha}^{k}.
	$$
	Then
	\begin{equation*}
	\fint_{Q_{\alpha}^{k}}b\ dz=0
	\end{equation*}
	and
	\begin{align*}
	|g|\leq \fint_{Q_{\alpha}^{k}}|f|\ dz\leq \frac{1}{|Q_{\alpha}^{k}|\inf\limits_{Q_{\alpha}^{k}}w}\int_{Q_{\alpha}^{k}}|f|w\ dz\leq\frac{N}{w(Q_{\alpha}^{k})}\int_{Q_{\alpha}^{k}}|f|w\ dz\leq NC_{0}s,
	\end{align*}
	where we used the definition of $w$ and \eqref{prop f Ql}.
	Hence,
	$$\int_{Q_{\alpha}^{k}}|g|^{p}w\ dz\leq NC^p_{0}s^{p}w(Q_{\alpha}^{k}).$$
	
	Let $u_{1}\in \mathcal{H}_{p,w}^{1}(\cQ)$ be the unique weak solution of
	\begin{align*}
	\begin{cases}
	\mathcal{P}u_{1}=\Div b&\ \mbox{in}~\cQ,\\
	u_{1}=0&\ \mbox{on}~\partial_p \cQ.
	\end{cases}
	\end{align*}
Let $p'=p/(p-1)$ and $\mathcal{P}^{*}$ be the adjoint operator of $\mathcal{P}$.
	Set $c=\frac{4R_0}{\delta_{0}}$ with $R_{0}=\mbox{diam}~\cQ$. Then for any $R\geq cr_{k}$ such that $\cQ\setminus Q_{R}(z_{k})\neq\emptyset$ and $h\in C_{0}^{\infty}(\cQ_{2R}(z_{k})\setminus Q_{R}(z_{k}))$, let $u_{2}\in \mathcal{H}_{p',w^{-\frac{1}{p-1}}}^{1}(\cQ)$ be a weak solution of
	\begin{align*}
	\begin{cases}
	\mathcal{P}^{*}u_{2}=\Div h&\ \mbox{in}~\cQ,\\
	u_{2}=0&\ \mbox{on}~((-T,0)\times\partial \cD)\cup (\{t=0\}\times\overline{\cD}),
	\end{cases}
	\end{align*}
	which satisfies
	\begin{equation*}
	\left(\int_{\cQ}|Du_{2}|^{p'}w^{-\frac{1}{p-1}}\ dz\right)^{\frac{1}{p'}}\leq N\left(\int_{\cQ}|h|^{p'}w^{-\frac{1}{p-1}}\ dz\right)^{\frac{1}{p'}}=N\left(\int_{\cQ_{2R}(x_{k})\setminus Q_{R}(z_{k})}|h|^{p'}w^{-\frac{1}{p-1}}\ dz\right)^{\frac{1}{p'}}.
	\end{equation*}
	See Lemma \ref{sol weight}.
	Then by using the definition of adjoint solutions, the fact that $b$ is supported in $Q_{\alpha}^{k}$ with mean zero, and $h\in C_{0}^{\infty}(\cQ_{2R}(z_{k})\setminus Q_{R}(z_{k}))$, we obtain
	\begin{align}\label{esti Du h}
	\int_{\cQ_{2R}(z_{k})\setminus Q_{R}(z_{k})}Du_{1}\cdot h
	=\int_{Q_{\alpha}^{k}}Du_{2}\cdot b=\int_{Q_{\alpha}^{k}}\big(Du_{2}-Du_{2}(z_{k})\big)\cdot b.
	\end{align}
	Since $R\leq R_0$, $Q_{\frac{\delta_{0}R}{2R_0}}(z_{k})$ does not intersect with subdomains $\cQ_{j}$, $j=1,\dots,M-1$. Because $\mathcal{P}^{*}u_{2}=0$ in $\cQ_{R}(z_{k})$, by flattening the boundary and using a similar argument that led to an a priori estimate of the modulus of continuity of $Du_{2}$ in the proof of \cite[(4.22)]{dek2}, we have
\begin{align*}
	|Du_{2}(z)-Du_{2}(z_{k})|\leq N\left(\Big(\frac{|z-z_{k}|_{p}}{R}\Big)^{\gamma}
		+\int_{0}^{2|z-z_{k}|_{p}}\frac{\hat\omega_{A}(s)}{s}\ ds\right)
		R^{-d-2}\|Du_{2}\|_{L_{1}(\cQ_{\frac{\delta_{0}R}{2R_0}}(z_{k}))}
\end{align*}
for any $z\in Q_{\alpha}^{k}\subset\cQ_{\frac{\delta_{0}R}{4R_0}}(z_{k})$, where $\gamma\in(0,1)$ is a constant and $\hat\omega_{A}(s)$ is defined as in \cite[(4.15)]{dek2}, which is derived from $\omega_{A}(s)$. Then, coming back to \eqref{esti Du h}, using a similar argument in the proof of \cite[Theorem 5.2]{dx} and Lemma \ref{weak est barv}, we obtain
\begin{equation}
                                \label{eq5.13}
\int_{\cQ\setminus Q_{cr_{k}}(z_{k})}|Du_1|w\ dz\leq Nsw(Q_{\alpha}^{k}).
\end{equation}
	
(ii) If $\dist(x_{k},\partial\cD)\geq {\delta_{0}}/{2}$, then $Q_k$ does not intersect with $(-T,0)\times \partial\cD$. In this case, we choose the coordinate system according to $z_{k}$. The rest proof is the same as that in \cite[Theorem 5.2]{dx} and we also obtain \eqref{eq5.13}.
Therefore, $S$ satisfies the hypothesis of Lemma \ref{weak general}, and thus for any $s>0$,
$$w(\{z\in \cQ: |Du(z)|>s\})\leq\frac{N}{s}\|f\|_{L_{1,w}(\cQ)}.$$
The theorem is proved.
\end{proof}

\section{Application: Regularity for parabolic transmission problems}\label{transmission prb}

Caffarelli, Soria-Carro, and  Stinga \cite{css} recently proved existence, uniqueness, and optimal regularity of solutions to transmission problems for harmonic functions with $C^{1,\alpha}$ interfaces.  Their argument is mainly based on the mean value property and the maximum principle for harmonic functions and an approximation argument. In \cite{dong}, an alternative proof of the result in \cite{css} is given, which works for more general elliptic systems with multiple subdomains and $C^{1,\text{Dini}}$ interfaces. The main idea of the proof in \cite{dong} is to reduce the transmission problems to elliptic systems with piecewise H\"{o}lder or Dini continuous non-homogeneous terms by solving a Laplace equation with conormal boundary data. Then the results follows by these in \cite{d,dx}. In this section, we extend the results in \cite{css, dong} to parabolic systems by using Theorems \ref{thm1} and \ref{thm holder}.

\subsection{Main results for the transmission problem}
We first  introduce some notation. For $\cQ:=(-T,0)\times\cD$, we denote
$$B\cQ:=\{t=-T\}\times\overline{\cD},\quad S\cQ:=(-T,0)\times\partial\cD,\quad\partial_{p}\cQ:=B\cQ\cup S\cQ.$$
In the following, we assume that $\cQ_{j}=(-T,0)\times \cD_j$ are cylindrical, $j=1,\ldots,M$, and similarly define $B\cQ_{j}$ and $S\cQ_{j}$. Without loss of generality, we assume that $\cD_j\subset\subset \cD$ for $j=1,\ldots,M-1$ and  $\partial\cD\subset \partial\cD_M$. For $\gamma\in(0,1]$, we denote $C^{(1+\gamma)/2,1+\gamma}(\cQ)$ to be the space of functions with finite norm $|u|_{(1+\gamma)/2,1+\gamma;\cQ}$.
The transmission problem is given by
\begin{align}\label{trans prb}
\begin{cases}
\displaystyle \mathcal{P}_{0}u:=-u_{t}+D_{\alpha}(A^{\alpha\beta}D_{\beta}u)=\Div F+f& \mbox{in}~\cup_{j=1}^{M}\cQ_{j},\\
u=0& \mbox{on}~\partial_{p}\cQ,\\
u\big|_{S\cQ_{j}}^{+}=u\big|_{S\cQ_{j}}^{-},\quad A^{\alpha\beta}D_{\beta}u\nu_{\alpha}\big|_{S\cQ_{j}}^{+}
-A^{\alpha\beta}D_{\beta}u\nu_{\alpha}\big|_{S\cQ_{j}}^{-}=g_{j},& j=1,\dots,M-1,
\end{cases}
\end{align}
where $\nu_{\alpha}$ is the unit normal vector on $S\cQ_{j}$ pointing inside $S\cQ_{j}$, $u\big|_{S\cQ_{j}}^{+}$ and $u\big|_{S\cQ_{j}}^{-}$ ($A^{\alpha\beta}D_{\beta}u\nu_{\alpha}\big|_{S\cQ_{j}}^{+}$ and $A^{\alpha\beta}D_{\beta}u\nu_{\alpha}\big|_{S\cQ_{j}}^{-}$) are the left and right limits of $u$ (its conormal derivatives) on $S\cQ_{j}$, respectively, $j=1,\dots,M-1$.

The first result of this section is about the case when the interfaces are $C^{1,\mu}$ in the spatial variables and the coefficients and data are piecewise H\"{o}lder continuous.
\begin{theorem}\label{trans holder}
Assume that $\partial\cD_j$ are $C^{1,\mu}$, $A^{\alpha\beta}$ and $F$ are piecewise  $C^{\delta/2,\delta}$ with $\delta\in \Big(0,\mu/(1+\mu)\Big]$, and $g_{j}\in C^{\delta/2,\delta}(S\cQ_{j})$, $j=1,\dots,M-1$. Then there exists a unique weak solution $u\in \mathcal{H}_{2}^{1}(\cQ)$ to \eqref{trans prb}, which is piecewise $C^{(1+\delta)/2,1+\delta}$ up to $S\cQ_{j}$, $j=1,\dots,M$, and satisfies
$$\sum_{j=1}^{M}|u|_{(1+\delta)/2,1+\delta;\overline{\cQ_{j}}}\leq N\bigg(\sum_{j=1}^{M-1}|g_{j}|_{\delta/2,\delta;S\cQ_{j}}+\sum_{j=1}^{M}|F|_{\delta/2,\delta;\overline{\cQ}_{j}}
+\|f\|_{L_{\infty}(\cQ)}\bigg),$$
where $N$ depends on $n$, $d$, $M$, $\delta$, $\mu$, $\nu$, $\Lambda$, $\cQ_{j}$, and  $\|A\|_{C^{\delta/2,\delta}(\overline{\cQ}_{j})}$.
\end{theorem}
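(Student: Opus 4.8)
\textbf{Proof proposal for Theorem \ref{trans holder}.}

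The plan is to follow the reduction strategy of \cite{dong}, adapted to the parabolic setting, namely to transform the transmission problem \eqref{trans prb} into a single parabolic system of the form \eqref{systems} with piecewise H\"older inhomogeneous terms, to which Theorem \ref{thm holder} applies directly. First I would handle the existence and uniqueness of a weak solution $u\in\mathcal H_2^1(\cQ)$ by a standard Lax--Milgram / Galerkin argument: the transmission condition $A^{\alpha\beta}D_\beta u\,\nu_\alpha|^+_{S\cQ_j}-A^{\alpha\beta}D_\beta u\,\nu_\alpha|^-_{S\cQ_j}=g_j$ is precisely the natural (conormal) jump built into the weak formulation, so that $u$ solves \eqref{trans prb} weakly iff
$$
\int_{\cQ}\big(-u\,\varphi_t+A^{\alpha\beta}D_\beta u\,D_\alpha\varphi\big)\,dz
=\int_{\cQ}\big(-F_\alpha D_\alpha\varphi+f\varphi\big)\,dz+\sum_{j=1}^{M-1}\int_{S\cQ_j}g_j\varphi
$$
for all test functions $\varphi$; strong parabolicity and boundedness of $A$ give coercivity on $\mathcal H_2^1$, and the $g_j$ terms are controlled by the trace theorem, yielding a unique solution.

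The heart of the matter is the regularity. For each $j=1,\dots,M-1$ I would solve, in each adjacent subdomain, an auxiliary parabolic equation (say with the Laplacian, or with $-v_t+\Delta v=0$) carrying conormal boundary data $g_j$ on $S\cQ_j$ and homogeneous data elsewhere; call these solutions $v_j$. Subtracting a suitable combination of the $v_j$ from $u$ then produces a function $w$ whose conormal derivatives match across every interface, so $w$ is a genuine $\mathcal H^1$ weak solution of a parabolic system \eqref{systems} on all of $\cQ$ with right-hand side of the form $\Div(\widetilde F)+\widetilde f$, where $\widetilde F$ and $\widetilde f$ are piecewise $C^{\delta/2,\delta}$ once we know $v_j$ and $Dv_j$ are. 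The interior and boundary Schauder theory for the constant-coefficient (or VMO) auxiliary equation supplies the piecewise $C^{(1+\delta)/2,1+\delta}$ regularity of $v_j$ up to $S\cQ_j$ — here the $C^{1,\mu}$ regularity of the interface with $\delta\le\mu/(1+\mu)$ is exactly what is needed to flatten $S\cQ_j$ and preserve the H\"older class. Applying Theorem \ref{thm holder} to $w$ then gives $w\in C^{(1+\delta)/2,1+\delta}$ piecewise, and $u=w+\sum c_j v_j$ inherits the same regularity, together with the asserted estimate after tracking the constants through the trace and Schauder bounds.

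The main obstacle, flagged already in the introduction, is that \emph{in the parabolic case one cannot treat $Dv_j$ as an inhomogeneous term directly}, because $\partial_t v_j$ lives only in a negative-order Sobolev space and $Dv_j$ is not regular enough in $t$ a priori to be folded into the data of \eqref{systems} in the way \cite{dong} does for elliptic systems. The remedy I would use — again following the strategy sketched in the introduction — is to work with the \emph{difference} $u-\sum c_j v_j$ rather than substituting derivatives, and to \emph{extend} each $v_j$ from its subdomain to all of $\cQ$. This extension is achieved by a partition of unity localized to each $\cD_j$ together with a flattening-reflection argument across $S\cQ_j$: near an interface point one straightens $S\cQ_j$ by a $C^{1,\mu}$ change of variables, performs an even (or Dini-type) reflection of the auxiliary solution, and patches the pieces; one must check that this produces an $\mathcal H^1_2$ function on $\cQ$ whose restriction to each $\cQ_k$ is still piecewise $C^{(1+\delta)/2,1+\delta}$ and whose contribution to the equation for $w$ has only piecewise $C^{\delta/2,\delta}$ coefficients and data. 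Carrying out this extension carefully, and verifying that the reflected/extended $v_j$ do not destroy the conormal matching that makes $w$ a bona fide solution on all of $\cQ$, is the one genuinely technical step; the rest is bookkeeping on top of Theorem \ref{thm holder} and standard parabolic Schauder estimates.
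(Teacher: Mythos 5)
Your proposal is correct and follows essentially the same route as the paper: flatten each interface by a $C^{1,\mu}$ diffeomorphism, solve an auxiliary heat equation with conormal data, extend by even reflection and a partition of unity, subtract the resulting $v_j$ from $u$, and apply Theorem \ref{thm holder} to the difference $w$, with existence/uniqueness from a Galerkin argument. The only minor discrepancy is attributional: the restriction $\delta\le\mu/(1+\mu)$ comes from Theorem \ref{thm holder} applied to $w$, not from the flattening/Schauder step for the auxiliary solutions, which works for any $\delta\le\mu$.
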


\begin{remark}
In the special case when $M=2$ or when $A^{\alpha\beta}$ and $F$ are H\"older continuous in the whole domain, by using \cite{d} and the linearity the result of Theorem \ref{trans prb} still holds with $\delta=\mu$.
\end{remark}

Our second result is concerned with the case when the interfaces are  $C^{1,\text{Dini}}$ in the spatial variables and the coefficients and data are of piecewise Dini mean oscillation. 

\begin{theorem}\label{trans dini}
Assume that $\partial\cD_{j}$ are $C^{1,\text{Dini}}$, $A^{\alpha\beta}$ and $F$ are of piecewise Dini mean oscillation in $\cQ$, $F, f\in L_{\infty}(\cQ)$, and $g_{j}$ are Dini continuous on $S\cQ_{j}$, $j=1,\dots,M-1$. Then there exists a unique weak solution $u\in \mathcal{H}_{2}^{1}(\cQ)$ to \eqref{trans prb}, which is piecewise $C^{1/2,1}$ up to $S\cQ_{j}$, $j=1,\dots,M$.
\end{theorem}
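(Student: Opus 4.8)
The plan is to follow the strategy of \cite{dong} adapted to the parabolic setting, reducing the transmission problem to a divergence-form parabolic system with piecewise Dini mean oscillation data, to which Theorem \ref{thm1} applies. First I would solve, in each subdomain $\cQ_j$, an auxiliary parabolic system $\mathcal{P}_0 v_j = \Div F + f$ in $\cQ_j$ with the conormal (Neumann-type) boundary condition $A^{\alpha\beta}D_\beta v_j \nu_\alpha = g_j$ on $S\cQ_j$ (with a compatible split of the data on shared interfaces) and zero initial/lateral data where appropriate; solvability of these auxiliary problems in $\mathcal{H}^1_2$ follows from the weighted $\mathcal{H}^1_{p,w}$-solvability result of the Appendix (or classical $L_2$ theory) since $\partial\cD_j$ is $C^{1,\text{Dini}}$ and $A$ is partially VMO. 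The difference $w := u - \sum_j v_j\chi_{\cQ_j}$ then satisfies $\mathcal{P}_0 w = \Div G + h$ for new inhomogeneous terms that are of piecewise Dini mean oscillation, and, crucially, $w$ has \emph{no jump} in either the trace or the conormal derivative across each $S\cQ_j$; that is, $w$ is a genuine $\mathcal{H}^1$ weak solution of a parabolic system across the interfaces. Applying Theorem \ref{thm1} (and Corollary \ref{coro loc} to upgrade from $\mathcal{H}^1_2$ to $\mathcal{H}^1_p$) gives that $w$ is piecewise $C^{1/2,1}$ up to each $S\cQ_j$, and since the $v_j$ enjoy the same regularity (by interior/boundary estimates for the auxiliary conormal problems, e.g. \cite{dek2}), so does $u$.

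The new difficulty, as flagged in the introduction, is that unlike the elliptic case one cannot simply treat $D v_j$ as an inhomogeneous term in the reduced equation, because $(v_j)_t$ only lives in a Sobolev space of negative order, so the term $D_\alpha(A^{\alpha\beta}D_\beta v_j)$ and the time-derivative term do not separately make sense as data. The remedy I would implement is to work directly with the difference $w = u - \tilde v$, where $\tilde v$ is a \emph{global} extension to $\cQ$ of the collection $\{v_j\}$: one extends each $v_j$ from $\cQ_j$ to a neighborhood by a partition-of-unity argument together with a flattening-and-even-reflection technique across $S\cQ_j$ (possible because $\partial\cD_j\in C^{1,\text{Dini}}$), patches these extensions, and checks that $\tilde v\in\mathcal{H}^1_2(\cQ)$ with $\mathcal{P}_0\tilde v$ equal to $\Div F+f$ plus terms supported near the interfaces that are still of piecewise Dini mean oscillation. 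Then $w=u-\tilde v$ lies in $\mathcal{H}^1_2(\cQ)$, solves a parabolic system of the form $\mathcal{P}_0 w = \Div G + h$ globally (the interface jump conditions for $u$ and the construction of $\tilde v$ are arranged so the distributional interface contributions cancel), and Theorem \ref{thm1} applies.

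For \textbf{existence and uniqueness} in $\mathcal{H}^1_2(\cQ)$: uniqueness follows from the standard energy estimate, testing the (homogeneous) transmission problem against $u$ itself and using strong parabolicity, since the interface terms telescope to zero. Existence follows either from the Galerkin method in the closed subspace of $\mathcal{H}^1_2$ encoding the continuity condition $u|^+_{S\cQ_j}=u|^-_{S\cQ_j}$ with the conormal jump built in weakly via the bilinear form, or equivalently by combining the solvability of the auxiliary conormal problems with the $\mathcal{H}^1_2$-solvability of the reduced system for $w$ and setting $u=w+\tilde v$.

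The \textbf{main obstacle} I anticipate is the construction and bookkeeping of the global extension $\tilde v$: one must verify that after flattening each $C^{1,\text{Dini}}$ interface, the even reflection of the conormal-data solution preserves membership in $\mathcal{H}^1_2$ and that the resulting commutator terms (from the flattening change of variables and the cutoff functions) remain of piecewise Dini mean oscillation — in particular they must not generate terms of the form $(v_j)_t$ outside of $\mathbb{H}^{-1}$, nor destroy the Dini modulus. Once that is in place, the rest is a direct appeal to Theorem \ref{thm1} and Corollary \ref{coro loc}.
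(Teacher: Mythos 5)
Your overall strategy is the same as the paper's: solve auxiliary conormal problems attached to the interfaces, extend them globally by a partition of unity combined with flattening and even reflection, pass to the difference $w=u-\tilde v$ so that the time derivatives of the auxiliary solutions never appear as data, and then invoke Theorem \ref{thm1}. However, there is a genuine gap exactly at the point you flag as ``the main obstacle'' and then defer. For the Dini case the whole substance of the proof is to show that the gradients of the (reflected, transformed, cut-off) auxiliary solutions are of $L_2$-Dini mean oscillation. This is needed twice: it gives the piecewise $C^{1/2,1}$ regularity of the $v_j$'s themselves, and, more importantly, it guarantees that the commutator-type term $(A-a)Dv_j$ entering the reduced right-hand side $\tilde F$ is again of piecewise Dini mean oscillation, without which Theorem \ref{thm1} cannot be applied to $w$. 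The paper proves this claim by a Campanato-type iteration for the heat equation in a half-cylinder with conormal data $D_d v=g^d$, $g^d$ Dini continuous: one splits off the solution $w$ of the problem with data $g^d-\bar g^d$ (estimated by the modulus of continuity of $g$), uses the interior-type $C^{1/2,1}$ estimates for $D_{x'}v$ with zero conormal data and for $D_d v$ with constant Dirichlet data on the flat boundary, and iterates. Your proposal contains no substitute for this argument; citing boundary estimates from \cite{dek2} does not cover the conormal problem with merely Dini continuous Neumann data, which is precisely the new estimate required.

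A second, related problem is your choice of auxiliary problem: you solve $\mathcal{P}_0 v_j=\Div F+f$ in each $\cQ_j$ with conormal data $A^{\alpha\beta}D_\beta v_j\nu_\alpha=g_j$, i.e.\ with the actual coefficients, which are only of piecewise Dini mean oscillation. The even-reflection step after flattening does not produce a solution of an equation in the same class when the coefficients are variable (and merely Dini mean oscillation), and a boundary gradient theory for such conormal problems is not among the tools established in this paper. The paper avoids both issues by taking the auxiliary equations to be the \emph{heat} equation in the flattened coordinates, with only the interface datum $\tfrac12\zeta_{jk}g$ (the factor $\tfrac12$ being what makes the even reflection yield the jump $g_j$), leaving $F$ and $f$ untouched in the equation for $w$ and absorbing the mismatch $(A-a)Dv_j$ into $\tilde F$; the Dini mean oscillation of this mismatch is then exactly the claim discussed above. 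Your ``compatible split of the data on shared interfaces'' would need to be made precise along these lines, and the inclusion of $\Div F+f$ in the subdomain problems should be dropped, since it would demand conormal regularity with Dini mean oscillation data that you do not have.
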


\subsection{Proofs of Theorems \ref{trans holder} and \ref{trans dini}}
The proof is a modification of \cite{dong}.

\begin{proof}[Proof of Theorem \ref{trans holder}]
For a fixed $j=1,\dots,M-1$ and a point $x_{jk}\in\partial\cD_{j}$, there is a neighbourhood $V_{jk}$ of $x_{jk}$ and  a $C^{1,\mu}$ diffeomorphism ${\bf\Phi}_{jk}$ from $V_{jk}$ onto a unit ball $B:=B_{1}(0)\subset\mathbb R^{d}$ such that $${\bf\Phi}_{jk}(V_{jk}\cap\partial\cD_{j})\subset\partial\mathbb R^{d}_{+},\quad \det D{{\bf\Phi}_{jk}}=1.$$
Let
$$y={\bf\Phi}_{jk}(x)=(\Phi_{jk}^{1}(x),\dots,\Phi_{jk}^{d}(x)),\quad x=({{\bf\Phi}_{jk}})^{-1}(y)=:{\bf\Psi}_{jk}(y)\quad  (\det D{{\bf\Psi}_{jk}}=1),$$
and
$$g:=g(t,y')=g_{j}(t,x').$$
Since $\partial\cD_{j}$ is $C^{1,\mu}$, there exist finitely many points $x_{jk}\in\partial\cD_{j}$ and $V_{jk}\subset\cD$, $k=1,\dots,m$, such that $\partial\cD_{j}\subset\cup_{k=1}^{m}V_{jk}$. Let $\{\zeta_{jk}\}_{k=1}^{m}$ be a smooth partition of unity subordinate to $V_{jk}$. Denote $B^{+}:=B\cap\{y^{d}>0\}$, $Q^{+}:=(-T,0)\times B^{+}$, $\Gamma:=B\cap\{y^{d}=0\}$, and $SQ^{+}:=(-T,0)\times \Gamma$. Let  $v_{jk}\in\mathcal{H}_{2}^{1}(Q^{+})$ be the weak solution to
\begin{align*}
\begin{cases}
-\partial_{t}v_{jk}+\Delta v_{jk}=0&\mbox{in}~Q^{+},\\
\partial_{\nu}v_{jk}\big|_{SQ^{+}}^{+}=\frac 1 2\zeta_{jk}\circ{\bf\Psi}_{jk}g,\\
v_{jk}=0&\mbox{on}~\partial_{p}Q^{+}\setminus SQ^{+}.
\end{cases}
\end{align*}
The existence and uniqueness follows from \cite[Theorem 6.46]{lgm}. We take the even extension $\bar{v}_{jk}$ of $v_{jk}$ with respect to $\{y^{d}=0\}$ defined by
\begin{align*}
\bar{v}_{jk}(t,y)=
\begin{cases}
v_{jk}(t,y)& \mbox{in}~(-T,0)\times B^{+},\\
v_{jk}(t,y',-y^{d})& \mbox{in}~(-T,0)\times B^{-},
\end{cases}
\end{align*}
where $B^{-}:=B\cap\{y^{d}<0\}$. Then $\bar{v}_{jk}$ satisfis
\begin{align}\label{eq bar v}
\begin{cases}
-\partial_{t}\bar{v}_{jk}+\Delta \bar{v}_{jk}=0& \mbox{in}~(-T,0)\times B,\\
\partial_{\nu}\bar{v}_{jk}\big|_{SQ^{+}}^{+}-\partial_{\nu}\bar{v}_{jk}\big|_{SQ^{+}}^{-}
=\zeta_{jk}\circ{\bf\Psi}_{jk}g,\\
\bar{v}_{jk}\big|_{SQ^{+}}^{+}=\bar{v}_{jk}\big|_{SQ^{+}}^{-},\\
\bar{v}_{jk}=0& \mbox{on}~\partial_{p}((-T,0)\times B).
\end{cases}
\end{align}

Next we transform back to the $x$-variables. Let $\tilde v_{jk}(t,x)=\bar v_{jk}(t,{{\bf\Phi}_{jk}}(x))$ and
$$
a^{\alpha\beta}:=a^{\alpha\beta}(x)
=D_{l}{\bf\Psi}_{jk}^{\alpha}(y)D_{l}{\bf\Psi}_{jk}^{\beta}(y).
$$
Choose a cut-off function $\eta_{jk}:=\eta_{jk}(x)\in C_{0}^{\infty}(V_{jk})$ satisfying $0\leq\eta_{jk}\leq1$ and $\eta_{jk}\equiv1$ on the support of $\zeta_{jk}$.
From \eqref{eq bar v}, we obtain
\begin{align}\label{eq tildevk}
\begin{cases}
-\partial_{t}\tilde{v}_{jk}+D_{\alpha}(a^{\alpha\beta}D_{\beta}\tilde{v}_{jk})=0
& \mbox{in}~(-T,0)\times V_{jk},\\
a^{\alpha\beta}D_{\beta}\tilde{v}_{jk}\nu_{\alpha}\big|_{(-T,0)\times(V_{jk}\cap\partial\cD_{j})}^{+}-a^{\alpha\beta}D_{\beta}\tilde{v}_{jk}\nu_{\alpha}\big|_{(-T,0)\times(V_{jk}\cap\partial\cD_{j})}^{-}=\zeta_{jk}g_{j},\\
\tilde{v}_{jk}\big|_{(-T,0)\times(V_{jk}\cap\partial\cD_{j})}^{+}=\tilde{v}_{jk}\big|_{(-T,0)\times(V_{jk}\cap\partial\cD_{j})}^{-},\\
\tilde{v}_{jk}=0& \mbox{on}~\partial_{p}((-T,0)\times V_{jk}),
\end{cases}
\end{align}
which together with a direct calculation yields that
\begin{align}\label{eq tilde vii}
\begin{cases}
-\partial_{t}(\tilde{v}_{jk}\eta_{jk})
+D_{\alpha}(a^{\alpha\beta}D_{\beta}(\tilde{v}_{jk}\eta_{jk}))
=D_{\alpha}(\tilde{v}_{jk}a^{\alpha\beta}D_{\beta}\eta_{jk})+a^{\alpha\beta}
D_{\alpha}\eta_{jk}D_{\beta}\tilde v_{jk}&\mbox{in}~\cQ,\\
a^{\alpha\beta}D_{\beta}(\tilde{v}_{jk}\eta_{jk})\nu_{\alpha}\big|_{S\cQ_{j}}^{+}
-a^{\alpha\beta}D_{\beta}(\tilde{v}_{jk}\eta_{jk})\nu_{\alpha}\big|_{
S\cQ_{j}}^{-}=\zeta_{jk}g_{j},\\
\tilde{v}_{jk}\eta_{jk}\big|_{(-T,0)\times(V_{jk}\cap\partial\cD_{j})}^{+}=\tilde{v}_{jk}\eta_{jk}\big|_{(-T,0)\times(V_{jk}\cap\partial\cD_{j})}^{-},\\
\tilde{v}_{jk}\eta_{jk}=0&\mbox{on}~\partial_p\cQ.
\end{cases}
\end{align}
Applying the trace lemma, $\mathcal{H}_{2}^{1}$-estimate, and \cite[Theorem 1.1]{lieberman} to \eqref{eq tildevk}, we have
\begin{equation}\label{vk H21}
\|\tilde v_{jk}\eta_{jk}\|_{\mathcal{H}_{2}^{1}((-T,0)\times V_{jk})}\leq N\|g_{j}\|_{L_{2}(S\cQ_{j})},
\end{equation}
and
\begin{equation}\label{tildev holder}
|\tilde{v}_{jk}\eta_{jk}|_{(1+\delta)/2,1+\delta;(-T,0)\times V_{jk}}\leq N|g_{j}|_{\delta/2,\delta;S\cQ_{j}}.
\end{equation}
Denote
$$v_{j}:=\sum_{k=1}^{m}\tilde v_{jk}\eta_{jk}\quad \text{in}\ \cQ.$$
Then we have from \eqref{eq tilde vii} and the identity $\sum_{k=1}^{m}\zeta_{jk}=1$ on $\partial \cD_j$ that
\begin{align}\label{eq vj}
\begin{cases}
\displaystyle -\partial_{t} v_{j}+D_{\alpha}(a^{\alpha\beta}D_{\beta} v_{j})=\sum_{k=1}^{m}D_{\alpha}(\tilde{v}_{jk}a^{\alpha\beta}
D_{\beta}\eta_{jk})+\sum_{k=1}^{m}a^{\alpha\beta}D_{\alpha}\eta_{jk}D_{\beta}\tilde v_{jk}&\mbox{in}~\cQ,\\
a^{\alpha\beta}D_{\beta}v_{j}\nu_{\alpha}\big|_{S\cQ_{j}}^{+}-a^{\alpha\beta}D_{\beta}v_{j}\nu_{\alpha}\big|_{S\cQ_{j}}^{-}=g_{j},\\
v_{j}\big|_{S\cQ_{j}}^{+}=v_{j}\big|_{S\cQ_{j}}^{-},\\
v_{j}=0&\mbox{on}~\partial_p\cQ.
\end{cases}
\end{align}
Furthermore, we have from \eqref{vk H21} and \eqref{tildev holder} that
\begin{equation}\label{hatv H21}
\|v_{j}\|_{\mathcal{H}_{2}^{1}(\cQ)}\leq N\|g_{j}\|_{L_{2}(S\cQ_{j})},
\end{equation}
and
\begin{equation}\label{vj holder}
|v_{j}|_{(1+\delta)/2,1+\delta;\cQ}\leq N|g_{j}|_{\delta/2,\delta;S\cQ_{j}}.
\end{equation}

Denote
\begin{equation}\label{def w u vj}
w:=u-\sum_{j=1}^{M-1}v_{j},
\end{equation}
then it follows from the definition of weak solutions and \eqref{eq vj} that $w$ satisfies
\begin{align}\label{trans prb22}
\begin{cases}
\mathcal{P}_{0}w=\Div \tilde F+\tilde f&\mbox{in}~\cQ,\\
w=0&\mbox{on}~\partial_{p}\cQ,
\end{cases}
\end{align}
where
$$\tilde F=1_{\cup_{j=1}^{M}\cQ_{j}}F-\sum_{j=1}^{M-1}
(A-a)Dv_{j}-\sum_{j=1}^{M-1}\sum_{k=1}^{m}\tilde{v}_{jk}aD\eta_{jk},\quad a=(a^{\alpha\beta}),$$
and
$$\tilde f=f-\sum_{j=1}^{M-1}\sum_{k=1}^{m}a^{\alpha\beta}D_{\alpha}\eta_{jk}D_{\beta}\tilde v_{jk}.$$
By using the Galerkin method, we find that there is a unique solution $w\in\mathcal{H}_{2}^{1}(\cQ)$ to \eqref{trans prb22} and thus the existence and uniqueness of $u$ is proved. Moreover,
\begin{align}\label{est u H21}
\|w\|_{\mathcal{H}_{2}^{1}(\cQ)}&\leq N\Big(\|\tilde F\|_{L_{2}(\cQ)}+\|\tilde f\|_{L_{2}(\cQ)}\Big)\nonumber\\
&\leq N\Big(\|F\|_{L_{2}(\cQ)}+\sum_{j=1}^{M-1}
\|(A^{\alpha\beta}-a^{\alpha\beta})D_{\beta}v_{j}\|_{L_{2}(\cQ)}
+\sum_{j=1}^{M-1}\sum_{k=1}^{m}\|\tilde{v}_{jk}a^{\alpha\beta}D_{\beta}\eta_{jk}\|_{L_{2}(\cQ)}\nonumber\\
&\quad+\|f\|_{L_{2}(\cQ)}+\sum_{j=1}^{M-1}\sum_{k=1}^{m}\|a^{\alpha\beta}D_{\alpha}\eta_{jk}D_{\beta}\tilde v_{jk}\|_{L_{2}(\cQ)}\nonumber\\
&\leq N\Big(\|F\|_{L_{2}(\cQ)}+\sum_{j=1}^{M-1}\|g_{j}\|_{L_{2}(S\cQ_{j})}+\|f\|_{L_{2}(\cQ)}\Big),
\end{align}
where we used \eqref{hatv H21}, the boundedness of $A^{\alpha\beta}$ and $a^{\alpha\beta}$, and \eqref{vk H21}  in the third inequality. Recalling that $\partial\cD_{j}$ are $C^{1,\mu}$, $A^{\alpha\beta}$ and $\tilde F$ are piecewise  $C^{\delta/2,\delta}$, by using Theorem \ref{thm holder}, \eqref{tildev holder}, \eqref{vj holder}, and \eqref{est u H21}, we obtain
\begin{align}\label{w holder}
&\sum_{j=1}^{M}|w|_{(1+\delta)/2,1+\delta;\overline{\cQ_{j}}}\nonumber\\
&\leq N\bigg(\sum_{j=1}^{M-1}|(A^{\alpha\beta}-a^{\alpha\beta})
D_{\beta}v_{j}|_{\delta/2,\delta;\overline{\cQ}}
+\sum_{j=1}^{M-1}\sum_{k=1}^{m}\|\tilde{v}_{jk}
a^{\alpha\beta}D_{\beta}\eta_{jk}\|_{\delta/2,\delta;\overline{\cQ}}
+\sum_{j=1}^{M}|F|_{\delta/2,\delta;\overline{\cQ}_{j}}\nonumber\\
&\quad+\|f\|_{L_{\infty}(\cQ)}+\sum_{j=1}^{M-1}\sum_{k=1}^{m}
\|a^{\alpha\beta}D_{\alpha}\eta_{jk}D_{\beta}\tilde v_{jk}\|_{L_{\infty}(\cQ)}+\|w\|_{L_{2}(\cQ)}+\|Dw\|_{L_{1}(\cQ)}\bigg)\nonumber\\
&\leq N\bigg(\sum_{j=1}^{M-1}|g_{j}|_{\delta/2,\delta;S\cQ_{j}}+\sum_{j=1}^{M}|F|_{\delta/2,\delta;\overline{\cQ}_{j}}
+\|f\|_{L_{\infty}(\cQ)}\bigg).
\end{align}
Combining \eqref{def w u vj}, the triangle inequality, \eqref{vj holder}, and \eqref{w holder}, we have
\begin{align*}
\sum_{j=1}^{M}|u|_{(1+\delta)/2,1+\delta;\overline{\cQ_{j}}}
&\leq\sum_{j=1}^{M}|w|_{(1+\delta)/2,1+\delta;\overline{\cQ_{j}}}
+\sum_{j=1}^{M-1}|v_{j}|_{(1+\delta)/2,1+\delta;\overline{\cQ}}\\
&\leq N\bigg(\sum_{j=1}^{M-1}|g_{j}|_{\delta/2,\delta;S\cQ_{j}}
+\sum_{j=1}^{M}|F|_{\delta/2,\delta;\overline{\cQ}_{j}}
+\|f\|_{L_{\infty}(\cQ)}\bigg).
\end{align*}
The theorem is proved.
\end{proof}

We say that a function $f$ is of $L_2$-Dini mean oscillation in $\cQ$ if
\begin{align*}
{\check\omega}_{A}(r):=\sup_{z_{0}\in \cQ}
\bigg(\fint_{Q_{r}^{-}(z_{0})\cap \cQ}|f(t,x)-(f)_{Q_{r}^{-}(z_{0})\cap \cQ}|^2\ dz\bigg)^{1/2}
\end{align*}
satisfies the Dini condition.

\begin{proof}[Proof of Theorem \ref{trans dini}]
If we can prove the claim: $\nabla v_{j}$ satisfies the $L_{2}$-Dini mean oscillation condition in $\cQ$, then the rest of the proof follows from that of Theorem \ref{trans holder} by using Theorem \ref{thm1}. Hence, it suffices to prove the claim. The proof is a modification of \cite[Theorem 1.7]{dlk} and \cite[Theorem 1.4]{dong}, and we only need to prove the boundary estimate since the interior estimate is simpler.

For any $z_0\in\partial\mathbb R^{d+1}=\{x^{d}=0\}$,
$$
Q_{r,+}^{-}(z_0):=Q_{r}^{-}(z_0)\cap\{x^{d}>0\} \ \ \text{and} \ \
\Gamma_{r}(z_0):=Q_{r}^{-}(z_0)\cap\{x^{d}=0\}.
$$
Recalling that $SQ_{j}$ is $C^{1,\text{Dini}}$, and as in the proof of Theorem \ref{trans holder}, we only need to prove that for any weak $u$ solution to
\begin{align}
\begin{cases}
-u_{t}+\Delta u=D_{d}g^{d}&\quad \mbox{in}~Q_{4,+}^{-}(0),\\
D_du=g^{d}&\quad \mbox{on}~\Gamma_{4}(0),
\end{cases}
\end{align}
$\nabla u$ satisfies the $L_{2}$-Dini mean oscillation condition in $Q_{1,+}^{-}(0)$,
provided that $g^{d}=g^{d}(t,x)=g^d(t,x')$ is Dini continuous satisfying $D_{d}g^{d}=0$.

For $z\in Q_{4,+}^{-}(0)$ and $r>0$, we define
$$\phi(z,r):=\left(\fint_{Q_{r}^{-}(z)\cap Q_{4,+}^{-}(0)}|Du-(Du)_{Q_{r}^{-}(z)\cap Q_{4,+}^{-}(0)}|^{2}\right)^{1/2}.$$
As in \cite{dek2}, we denote $B_{r}^{+}(x_{0})=B_{r}(x_{0})\cap\{x^{d}>0\}$ and fix a smooth set $\cD\subset\mathbb R^{d}$ satisfying $B_{2/3}^{+}(0)\subset\cD\subset B_{3/4}^{+}(0)$ and for $z_{0}=(t_{0},x_{0})\in\partial\mathbb R^{d+1}$, we denote
$$\cD_{r}(x_{0}):=r\cD+x_{0}.$$
Now we decompose $u=w+v$, where $w\in\mathcal{H}_{2}^{1}$ is the weak solution of
\begin{align*}
\begin{cases}
-w_{t}+\Delta w
=D_{d}(g^{d}-\bar{g}^{d})& \mbox{in}~(t_{0}-4r^2,t_{0})\times\cD_{2r}(x_{0}),\\
D_{\alpha}w\nu_\alpha=
(g^{d}-\bar{g}^{d})\nu_{d}& \mbox{on}~  (t_{0}-4r^2,t_{0})\times\partial\cD_{2r}(x_{0}),\\
w=0& \mbox{on}~ \{t=t_{0}-4r^2\}\times\cD_{2r}(x_{0}),
\end{cases}
\end{align*}
where
\begin{align*}
\bar{g}^{d}=\fint_{Q_{2r,+}^{-}(z_0)}g^{d}(t,x)\ dx \ dt.
\end{align*}
By using the $\mathcal{H}_{2}^{1}$-estimate and $ Q_{r,+}^{-}(z_{0})\subset(t_{0}-4r^2,t_{0})\times\cD_{2r}(x_{0})\subset Q_{2r,+}^{-}(z_{0})$, we have
\begin{align}\label{Dw est}
\left(\fint_{Q_{r,+}^{-}(z_{0})}|Dw|^{2}\right)^{1/2}\leq N\omega_{2r}(g).
\end{align}
Here $\omega_{\bullet}(g)$ denotes the modulus of continuity of $g$ in the $L_\infty$ sense.
Notice that $v:=u-w$ satisfies
\begin{align*}
\begin{cases}
-v_{t}+\Delta v=D_d\bar g^d&\mbox{in}~Q_{r,+}^{-}(z_{0}),\\
D_{d}v=\bar{g}^{d}&\mbox{on}~ \Gamma_{r}(z_{0}).
\end{cases}
\end{align*}
For any $\mathbf q=(q_{1},\dots,q_{d})=(q',q_{d})\in\mathbb R^{d}$, $\bar v:=D_{x'}v-q'$ satisfies
\begin{align*}
\begin{cases}
-\bar v_{t}+\Delta \bar v=0&\mbox{in}~Q_{r,+}^{-}(z_{0}),\\
D_{d}\bar v=0&\mbox{on}~ \Gamma_{r}(z_{0}).
\end{cases}
\end{align*}
Then by the standard parabolic estimates for equations with constant coefficients and zero conormal boundary data, we have
\begin{equation}\label{bar v holder}
[D_{x'} v]_{C^{1/2,1}(Q_{r/2,+}^{-}(z_{0}))}\leq Nr^{-1}\left(\fint_{Q_{r,+}^{-}(z_{0})}|D_{x'} v-q'|^{2}\right)^{1/2}.
\end{equation}
Now observe that $D_d v$ satisfies a heat equation in $Q_{r,+}^{-}(z_{0})$ with a constant Dirichlet boundary condition on $\Gamma_{r}(z_{0})$. Applying \cite[Lemma 4.15]{dek2}, we get
\begin{equation}\label{DDd v}
[D_{d}v]_{C^{1/2,1}(Q_{r/2,+}^{-}(z_{0}))}
\leq Nr^{-1}\left(\fint_{Q_{r,+}^{-}(z_{0})}|D_dv-q_d|^{2}\right)^{1/2}.
\end{equation}
Let $\kappa\in(0,1/2)$ be a small constant, then combining \eqref{bar v holder} and \eqref{DDd v}, we have
\begin{align}\label{difference Dv}
\left(\fint_{Q_{\kappa r,+}^{-}(z_{0})}|Dv-(Dv)_{Q_{\kappa r,+}^{-}(z_{0})}|^{2}\right)^{1/2}&\leq 2\kappa r[Dv]_{C^{1/2,1}(Q_{r/2,+}^{-}(z_{0}))}\nonumber\\
&\leq N_{0}\kappa\left(\fint_{Q_{r,+}^{-}(z_{0})}|Dv-{\bf q}|^{2}\right)^{1/2},\quad\forall~{\bf q}\in\mathbb R^d,
\end{align}
where $N_{0}=N_{0}(d,\nu,\Lambda)$. By using $u=v+w$, the triangle inequality, \eqref{Dw est}, and \eqref{difference Dv}, we obtain
\begin{align*}
&\left(\fint_{Q_{\kappa r,+}^{-}(z_{0})}|Du-(Dv)_{Q_{\kappa r,+}^{-}(z_{0})}|^{2}\right)^{1/2}\\
&\leq \left(\fint_{Q_{\kappa r,+}^{-}(z_{0})}|Dv-(Dv)_{Q_{\kappa r,+}^{-}(z_{0})}|^{2}\right)^{1/2}+\left(\fint_{Q_{\kappa r,+}^{-}(z_{0})}|Dw|^{2}\right)^{1/2}\\
&\leq N_{0}\kappa\left(\fint_{Q_{r,+}^{-}(z_{0})}|Du-\mathbf q|^{2}\right)^{1/2}+N\kappa^{-(d+2)/2}\omega_{2r}(g).
\end{align*}
Choosing ${\bf q}=(Du)_{Q_{r,+}^{-}(z_{0})}$, we reach
\begin{equation*}
\phi(z_{0},\kappa r)\leq N_{0}\kappa\phi(z_{0},r)
+N\kappa^{-(d+2)/2}\omega_{2r}(g).
\end{equation*}
The rest of the proof follows from an iteration argument. See, for example, the proof of \cite[Proposition 4.2]{dek2}. We omit the details. Therefore, we show that $Du$ is of $L_{2}$-Dini mean oscillation.
\end{proof}

\section{Appendix}

In the appendix, we prove a generalization of Lemma \ref{solvability}. We say that $w: \mathbb R^{d+1}\rightarrow[0,\infty)$ belongs to $A_{p}$ for $p\in(1,\infty)$ if
$$\sup_{Q}\frac{w(Q)}{|Q|}\left(\frac{w^{\frac{-1}{p-1}}(Q)}{|Q|}\right)^{p-1}<\infty,$$
where the supremum is taken over all parabolic cylinders $Q$ in $\mathbb R^{d+1}$. The value of the supremum is the $A_{p}$ constant of $w$, and will be denoted by $[w]_{A_p}$.

We consider the parabolic systems on $\cQ:=(-T,0)\times\cD$, where $\cD$ is a Reifenberg flat domain and the boundary $\partial\cD$ satisfies the following assumption with a parameter $\gamma_0\in (0,1/4)$ to be specified later.
\begin{assumption}[$\gamma_0$] \label{assump coeffi}
	There exists a constant $r_{0}\in(0,1]$ such that the following conditions hold.
	
	(1) In the interior of $\cD$, $A^{\alpha\beta}$ satisfy \eqref{BMO} in some coordinate system depending on $(t_{0},x_0)$ and $r$.
	
	(2) For any $x_{0}\in\partial\cD$, $t\in\mathbb R$, and $r\in(0,r_{0}]$, there is a coordinate system depending on $(t_{0},x_0)$ and $r$ such that in this new coordinate system, we have
	\begin{equation*}
	\{(y',y^{d}): x_0^{d}+\gamma_{0}r<y^{d}\}\cap B_{R}(x_0)\subset \cD\cap B_{R}(x_0)\subset\{(y',y^{d}): x_0^{d}-\gamma_{0}r<y^{d}\}\cap B_{R}(x_0)
	\end{equation*}
	and
	\begin{equation*}
	\fint_{Q_{r}^{-}(z_{0})}|A(t,x)-(A)_{Q^{-'}_{r}(z'_{0})}|\,dx\ dt\leq\gamma_{0},
	\end{equation*}
	where $(A)_{Q^{-'}_{r}(z'_{0})}=\fint_{Q^{-'}_{r}(z'_{0})}A(z',x^{d})\,dz'$.
\end{assumption}

\begin{lemma}\label{sol weight}
	Let $p\in(1,\infty)$ and $w$ be an $A_p$ weight. There exists a constant $\gamma_0\in (0,1/4)$ depending on $d$, $p$, $\nu$, $\Lambda$, and $[w]_{A_p}$
	such that, under Assumption \ref{assump coeffi}, for any $u\in \mathcal{H}_{p,w}^{1}((-T,0)\times\cD)$ satisfying
	\begin{align}\label{eq weight}
	\begin{cases}
	\mathcal{P}u-\lambda u=\Div f&\mbox{in}~\cQ,\\
	u=0&\mbox{on}~\partial_p \cQ,
	\end{cases}
	\end{align}
	where $\lambda\geq0$ and $f\in L_{p,w}(\cQ)$, we have
	\begin{equation}\label{W1p es weight}
	\|u\|_{\mathcal{H}_{p,w}^{1}(\cQ)}\leq N\|f\|_{L_{p,w}(\cQ)},
	\end{equation}
	where $N=N(n,d,p,\nu,\Lambda,[w]_{A_{p}},r_{0},T)$. Moreover, for any $f\in L_{p,w}(\cQ)$, \eqref{eq weight} admits a unique solution $u\in  \mathcal{H}_{p,w}^{1}(\cQ)$.
\end{lemma}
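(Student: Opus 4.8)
The plan is to follow the by-now-standard Fefferman--Stein sharp function / level-set approach in the weighted setting, as developed for parabolic equations with partially BMO coefficients in Reifenberg domains (see \cite{dk2}), combined with the usual $\lambda$-freezing device to handle the zeroth-order term $\lambda u$. First I would reduce to the case $\lambda = 0$: since $\lambda \ge 0$, one can absorb the term $-\lambda u$ by a standard argument, replacing $\cD$ by a slightly larger domain in the time direction or, more simply, by viewing $-\lambda u$ as part of the right-hand side and using the a priori estimate to close the estimate (the constant $T < \infty$ enters exactly here, so that the unweighted $L_2$ solvability gives a bound with no dependence on $\lambda$). It then suffices to prove \eqref{W1p es weight} with $\lambda = 0$, that is, the $\mathcal{H}_{p,w}^1$ a priori estimate plus solvability for $\mathcal{P}u = \Div f$ with zero parabolic boundary data.

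For the a priori estimate, I would first establish, under Assumption \ref{assump coeffi}($\gamma_0$), an interior and boundary \emph{mean oscillation estimate} for $Du$: for any parabolic cylinder $Q_r^-(z_0)$ with $r \le r_0$, decompose $u = w + v$ where $w$ solves the system with the frozen (one-variable-dependent) coefficient $\bar{A}(x^d)$ and zero right-hand side on a slightly larger cylinder (or half-cylinder, in the boundary case), and $v$ carries the perturbation coming from $A - \bar{A}$ and from $f$. The key pointwise gradient bound for $w$ comes from the $C_{\mathrm{loc}}^{1,\cdot}$-type regularity for systems with coefficients depending only on $x^d$ — precisely the content of Lemma~\ref{lemma xn} (and its boundary analogue, available by the remark following that lemma and the flattening technique) — while the error $Dv$ is controlled in $L_p$ by $\gamma_0 \|Du\|_{L_p}$ plus $\|f\|_{L_p}$ via the unweighted $\mathcal{H}_p^1$-solvability of Lemma~\ref{solvability} together with the smallness in \eqref{BMO}. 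This yields the dyadic ``good-$\lambda$'' or sharp-function inequality for $Du$. One then invokes the weighted Hardy--Littlewood maximal function theorem and the weighted Fefferman--Stein inequality, valid for $A_p$ weights, to upgrade to $\|Du\|_{L_{p,w}} \le N(\gamma_0\|Du\|_{L_{p,w}} + \|f\|_{L_{p,w}})$; choosing $\gamma_0$ small (depending on $d,p,\nu,\Lambda,[w]_{A_p}$, which is exactly the dependence asserted) absorbs the first term. The estimate for $\|u\|_{L_{p,w}}$ and for $\|u_t\|_{\mathbb{H}_{p,w}^{-1}}$ then follows from the equation itself (writing $u_t = \Div(A Du - f)$) and a weighted Poincaré inequality on the bounded cylinder $\cQ$, the latter contributing the dependence of $N$ on $T$ and $r_0$.

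For existence and uniqueness, I would use the method of continuity: connect $\mathcal{P}$ to the heat operator $-\partial_t + \Delta$ through the family $\mathcal{P}_\tau = -\partial_t + D_\alpha((\tau A^{\alpha\beta} + (1-\tau)\delta_{\alpha\beta}\delta_{ij})D_\beta \cdot)$, noting that the coefficients $\tau A + (1-\tau)I$ satisfy Assumption~\ref{assump coeffi}($\gamma_0$) uniformly in $\tau$, and that the heat operator is solvable in $\mathcal{H}_{p,w}^1$ with zero parabolic boundary data (this is classical, or follows from the same Fefferman--Stein machinery applied to constant coefficients). The uniform a priori estimate just obtained then propagates solvability from $\tau = 0$ to $\tau = 1$. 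Uniqueness is immediate from the a priori estimate applied to the difference of two solutions.

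The main obstacle I anticipate is the boundary mean oscillation estimate near $\partial\cD$: on a Reifenberg-flat piece one must flatten the boundary, which perturbs the coefficients, and then apply the half-space regularity for the model problem with coefficients depending only on $x^d$ and conormal-type data — controlling the flattening error and the coefficient oscillation simultaneously with a single small parameter $\gamma_0$ is the delicate part. This is precisely where the Reifenberg smallness condition in Assumption~\ref{assump coeffi}(2) and the partial-BMO smallness \eqref{BMO} must be balanced against each other, and it is the step where one should be most careful to track that the required smallness of $\gamma_0$ depends on $[w]_{A_p}$ (through the weighted maximal function bound) but not on the solution or on $\lambda$.
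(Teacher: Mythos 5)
Your overall strategy (weighted mean--oscillation/sharp--function estimates plus method of continuity) is a legitimate route, but it is quite different from, and much heavier than, what the paper actually does: the paper's proof is essentially a citation. For $\lambda\ge\lambda_0$ (a fixed large constant) the a priori estimate \eqref{W1p es weight} and the solvability are exactly the weighted $\mathcal{H}_{p,w}^{1}$ theory of \cite[Section 8 and Theorem 7.2]{dk1} for divergence form parabolic systems with partially BMO coefficients in Reifenberg domains; the remaining range $0\le\lambda\le\lambda_0$ is then reduced to the large-$\lambda$ case by the substitution $v=ue^{-\lambda_0 t}$, which solves $\mathcal{P}v-(\lambda+\lambda_0)v=\Div(e^{-\lambda_0 t}f)$, the factor $e^{-\lambda_0 t}$ being harmless on $(-T,0)$ because $\lambda_0$ is a fixed constant (this is precisely where the dependence on $T$ enters). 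Your plan instead re-derives the content of \cite{dk1} from scratch, and the steps you acknowledge as delicate (the boundary estimate under Reifenberg flatness combined with the partial smallness, and the weighted Fefferman--Stein/maximal machinery) are exactly the substance of that paper, left unexecuted in your proposal.

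Beyond this, there is a genuine gap in your treatment of the zeroth-order term: the reduction to $\lambda=0$ goes in the wrong direction. Viewing $-\lambda u$ as part of the right-hand side yields at best $\|u\|_{\mathcal{H}_{p,w}^{1}(\cQ)}\le N\big(\|f\|_{L_{p,w}(\cQ)}+\lambda\|u\|_{L_{p,w}(\cQ)}\big)$, which cannot be absorbed with $N$ independent of $\lambda$, while the exponential substitution in your direction ($\tilde u=ue^{\lambda t}$ solves the $\lambda=0$ problem with data $e^{\lambda t}f$) returns an estimate for $u=\tilde u e^{-\lambda t}$ polluted by a factor $e^{\lambda T}$, again not uniform in $\lambda\ge0$; and an unweighted $L_2$ bound cannot be upgraded to the weighted $\mathcal{H}_{p,w}^{1}$ bound, so the parenthetical appeal to $L_2$ solvability does not close this. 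The uniformity in $\lambda$ asserted in the lemma is genuine content of the large-$\lambda$ theory: either build $\lambda$ into your mean--oscillation estimates (as in \cite{dk1}), or prove the case $\lambda\ge\lambda_0$ first and then recover $0\le\lambda\le\lambda_0$ by the substitution $v=ue^{-\lambda_0 t}$ with a fixed $\lambda_0$, exactly as in the paper. As written, your argument establishes at most the $\lambda=0$ case (modulo the unexecuted weighted machinery), not the stated lemma.
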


\begin{proof}
The case when $\lambda>\lambda_{0}$ is proved in  \cite[Section 8]{dk1} and \cite[Theorem 7.2]{dk1}, where $\lambda_{0}>0$ is a sufficiently large constant depending on $n,d,p,\nu,\Lambda,[w]_{A_{p}}$ and $r_{0}$. For $0\leq\lambda\leq\lambda_{0}$, we set
$$v:=ue^{-\lambda_{0}t}.$$
Then we have
\begin{align*}
\begin{cases}
\mathcal{P}v-(\lambda+\lambda_{0})v=e^{-\lambda_{0}t}\Div f& \mbox{in}~\cQ,\\
v=0& \mbox{on}~\partial_p \cQ.
\end{cases}
\end{align*}
By using \cite[Theorem 7.2]{dk1}, we have
\begin{equation*}
\|v\|_{\mathcal{H}_{p,w}^{1}(\cQ)}\leq N\|e^{-\lambda_{0}t}f\|_{L_{p,w}(\cQ)}\leq N\|f\|_{L_{p,w}(\cQ)},
\end{equation*}
	where $N=N(n,d,p,\nu,\Lambda,[w]_{A_{p}},r_{0},T)$. Hence, we obtain
$$\|u\|_{\mathcal{H}_{p,w}^{1}(\cQ)}=\|ve^{\lambda_{0}t}\|_{\mathcal{H}_{p,w}^{1}(\cQ)}\leq N\|f\|_{L_{p,w}(\cQ)}.$$	
The theorem  is proved.
\end{proof}


\end{document}